\newtheorem{lem}{Lemma}
\newtheorem{remark}{Remark}
\newtheorem{prop}{Proposition}
\newtheorem{cor}{Corollary}
\newtheorem{thm}{Theorem}
\newtheorem{defn}{Definition}
\numberwithin{equation}{section}
\title{Distribution-Constrained Optimal Stopping}
\author{
Erhan Bayraktar\thanks{Department of Mathematics, University of Michigan ({erhan@umich.edu}). Supported in part by the NSF under grant number DMS-1613170 and in part by the Susan M. Smith Professorship.}
\and
 Christopher W. Miller\thanks{Department of Mathematics, University of California, Berkeley ({miller@math.berkeley.edu}). Supported in part by NSF GRFP under grant number DGE 1106400.}
}
\date{}
\begin{document}

\maketitle

\begin{abstract}We solve the problem of optimal stopping of a Brownian motion subject to the constraint that the stopping time's distribution is a given measure consisting of finitely-many atoms. In particular, we show that this problem can be converted to a finite sequence of state-constrained optimal control problems with additional states corresponding to the conditional probability of stopping at each possible terminal time. The proof of this correspondence relies on a new variation of the dynamic programming principle for state-constrained problems which avoids measurable selections. We emphasize that distribution constraints lead to novel and interesting mathematical problems on their own, but also demonstrate an application in mathematical finance to model-free superhedging with an outlook on volatility. \end{abstract}


\begin{keywords}
Optimal stopping, distribution constraints, optimal control, state constraints, robust hedging with a volatility outlook.
\end{keywords}


\begin{AMS}60G40, 93E20, 91G80.\end{AMS}

\pagestyle{myheadings}
\thispagestyle{plain}
\markboth{E. Bayraktar and C. Miller}{Distribution-Constrained Optimal Stopping}

\section{Introduction}

In this paper we consider the problem of choosing an optimal stopping time for a Brownian motion when constrained in the choice of distribution for the stopping time. While standard optimal stopping theory has focused primarily on unconstrained finite- and infinite-horizon stopping times (e.g. \cite{PeskirShiryaev2006,Shiryaev2008}) and very recently on constraints on the first moment of the stopping time (e.g. \cite{Miller2015,PedersenPeskir2016,AnkirchnerKleinKruse2015}), there is a very limited literature on the problem of optimal stopping under distribution constraints.

It turns out that distribution-constrained optimal stopping is a difficult problem, with stopping strategies depending path-wise on the Brownian motion in general. This is to be expected because a constraint on the stopping time's distribution forces the stopper to consider what he would have done along all other paths of the Brownian motion when deciding whether to stop. The main task at hand is to identify relevant state variables and then transform the problem so that it can be analyzed by standard methods.

In this article we illustrate a solution in the special case that the target distribution consists of finitely-many atoms. Our approach consists of iterated stochastic control problems wherein we introduce controlled processes representing the conditional distribution of the stopping time. We then characterize the value function of the distribution-constrained optimal stopping problem in terms of the value functions of a finite number of state-constrained optimal control problems. This dynamic approach to the problem in terms of a controlled process with unbounded diffusion is similar in flavour to recent results in non-linear optimal stopping \cite{Miller2015} and control of measure-valued martingales in \cite{CoxKallblad2015}.

The key mathematical contributions of this paper lie in our proof of a dynamic programming principle relating each of the sequential optimal control problems. We provide an argument which avoids the use of measurable selections, similar to the proofs of weak dynamic programming principles in \cite{BouchardTouzi2011,BouchardNutz2012,BayraktarYao2013}. However, we deal with state-constraints in a novel way which relies on some a priori regularity of the value functions.

While the problem of distribution-constrained optimal stopping is mathematically-interesting in its own right, we emphasize that there is room for applications in mathematical finance and optimal control theory. For instance, we demonstrate an application to model-free superhedging of financial derivatives when one has an outlook on the quadratic variation of an asset price. Here, the distribution on the quadratic variation corresponds to that of a stopping time by the martingale time-change methods utilized recently in \cite{BonnansTan2013,GalichonLabordereTouzi2014}. Furthermore, the problem of optimal stopping under moment constraints on the stopping time reduces to the distribution-constrained optimal stopping problem in cases where there exists a unique atomic representing measure in the truncated moment problem (e.g. \cite{CurtoFialkow1991,Lasserre2010}). There also appears to be a connection between distribution-constrained optimal stopping and inverse first passage-time problems (e.g. \cite{Zucca2009,Capocelli1972}). We should also mention that after the publication of our preprint, \cite{2016arXiv161201488B} gave geometric descriptions of optimal stopping times using optimal transport theory.

This paper proceeds as follows. In Section \ref{Section:MainResults}, we provide our solution to distribution-constrained optimal stopping of Brownian motion. In particular, we characterize the solution via a finite sequence of iterated state-constrained stochastic control problems. The main result is provided by an induction argument in Theorem \ref{Thm:MainResult}, but the heart of the argument lies mainly in the proofs of Lemma~\ref{Lem:WeakDPPLemma} and Lemma~\ref{Lem:TerminalRelaxation}. We also provide a time-dependent versions of these results, which can be characterized as the viscosity solutions of associated HJB equations. The key arguments here lie in a Dynamic Programming Principle in Theorem~\ref{Thm:TimeDependentDPP}. In Section \ref{Section:Application}, we demonstrate an application to model-free superhedging with an outlook on volatility. We convert this problem into a distribution-constrained optimal stopping problem where the volatility outlook corresponds to a distribution constraint for the stopping time. We demonstrate numerical results which provide some intuition for the behavior of the optimal stopping strategies. Finally, we provide complete proofs of our main results in Appendices \ref{Appendix:WeakDPPLemma}--\ref{Appendix:TimeDependentDPP}.

\section{Main Results}\label{Section:MainResults}

\subsection{Problem formulation}

We consider a probability space $(\Omega,\mathcal{F},\mathbb{P})$ supporting a standard Brownian motion $W$. We take $\mathbb{F}:=\{\mathcal{F}_t\}_{t\geq 0}$ to be the natural filtration of $W$ augmented to satisfy the usual properties. We consider a given payoff function $f:\mathbb{R}\to\mathbb{R}$ which is assumed to be Lipschitz continuous. We also use the notation
\[X^{t,x}_u := x + W_u-W_t\]
for any $(x,t)\in\mathbb{R}\times[0,\infty)$ and $u\in[t,\infty)$.

In this paper, we are also given a target distribution $\mu$, which is supported on $(0,\infty)$ and assumed to consist of finitely-many atoms. Without loss of generality, we assume the following representation
\begin{equation}\label{Eqn:AtomicMeasure}
\mu = \sum_{k=1}^r p_k\delta_{t_k},
\end{equation}
where $r\in\mathbb{N}$, $0=t_0<t_1<\cdots<t_r$, $p_1+\cdots+p_r=1$, and $p_1,\ldots,p_r>0$. We also introduce the convenient notation $\Delta t_k := t_k-t_{k-1}$ for each $k\in\{1,\ldots,r\}$.

The distribution-constrained optimal stopping problem we consider is
\begin{equation}\label{Equation:MainProblem}\begin{array}{rcl}
v^\star := & \sup\limits_{\tau\in\mathcal{T}} & \mathbb{E}\left[f(X^{0,x_0}_\tau)\right] \\
& \text{s.t.} & \tau \sim \sum_{k=1}^r p_k\delta_{t_k},
\end{array}\end{equation}
where we take $\mathcal{T}$ to be the collection of all finite-valued $\mathbb{F}$-stopping times which are independent of $\mathcal{F}_0$. We let $x_0\in\mathbb{R}$ be some fixed starting value. That is, we choose a stopping time $\tau$ whose distribution is equal to $\mu$ in order to maximize the expected payoff of a stopped Brownian motion starting at $x_0$.

%

\subsection{Construction of distribution-constrained stopping times}

There are multiple ways to naturally represent a stopping time satisfying a distribution constraint. In this section, we outline two particular such representations and illustrate how they immediately lead to constructions of such stopping times.

We first provide a characterization of distribution-constrained stopping times in terms of a partitioning of the path space into regions with specified measures. Later, we make a connection with controlled processes.

\begin{lem}\label{Lem:PartitionCharacterization}
A stopping time $\tau$ has the distribution $\mu$ if and only if it is of the following form
\begin{equation}\nonumber
\tau = \sum_{k=1}^r t_k\,1_{A_k},
\end{equation}
almost-surely, where $\{A_1,\ldots,A_r\}$ partition $\Omega$ and, for each $k\in\{1,\ldots,r\}$, $A_k$ is $\mathcal{F}_{t_k}$-measurable with $\mathbb{P}\left[A_k\right]=p_k$.
\end{lem}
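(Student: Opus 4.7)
The plan is to prove both implications directly, using the atomic structure of $\mu$ to identify the $A_k$'s with the level sets $\{\tau = t_k\}$.

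\textbf{($\Rightarrow$)} First I would assume $\tau \sim \mu$. Since $\mu$ is supported on $\{t_1,\ldots,t_r\}$, we have $\tau \in \{t_1,\ldots,t_r\}$ almost surely. Define $A_k := \{\tau = t_k\}$ for $k=1,\ldots,r$; on the null set where $\tau$ takes another value, reassign to $A_1$ so that $\{A_k\}$ partitions $\Omega$. The identity $\tau = \sum_k t_k \mathbf{1}_{A_k}$ holds a.s.\ by construction, and $\mathbb{P}[A_k] = \mu(\{t_k\}) = p_k$. The only point requiring care is $\mathcal{F}_{t_k}$-measurability of $A_k$: since $0 < t_1 < \cdots < t_r$ and $\tau$ is a stopping time,
\[
A_k = \{\tau \le t_k\} \setminus \{\tau \le t_{k-1}\},
\]
where $\{\tau \le t_k\} \in \mathcal{F}_{t_k}$ and $\{\tau \le t_{k-1}\} \in \mathcal{F}_{t_{k-1}} \subseteq \mathcal{F}_{t_k}$, so $A_k \in \mathcal{F}_{t_k}$.

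\textbf{($\Leftarrow$)} Conversely, suppose $\tau = \sum_k t_k \mathbf{1}_{A_k}$ a.s.\ with the stated measurability and probabilities. For the distribution, $\mathbb{P}[\tau = t_k] = \mathbb{P}[A_k] = p_k$, so $\tau \sim \mu$. To verify that $\tau$ is an $\mathbb{F}$-stopping time, fix $s \ge 0$ and let $k(s) := \max\{j : t_j \le s\}$ (with $k(s) = 0$ if $s < t_1$). Then
\[
\{\tau \le s\} = \bigcup_{j \le k(s)} A_j,
\]
and each $A_j$ with $j \le k(s)$ lies in $\mathcal{F}_{t_j} \subseteq \mathcal{F}_s$; hence $\{\tau \le s\} \in \mathcal{F}_s$. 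Finite-valuedness and independence from $\mathcal{F}_0$ follow from the form $\tau \in \{t_1,\ldots,t_r\}$ (the independence from $\mathcal{F}_0$ is automatic under the augmented natural filtration of $W$ since $\mathcal{F}_0$ is $\mathbb{P}$-trivial), so $\tau \in \mathcal{T}$.

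I expect no serious obstacle: the argument is essentially bookkeeping once one notes that the level sets of a stopping time taking values in a finite set $\{t_1 < \cdots < t_r\}$ are automatically $\mathcal{F}_{t_k}$-measurable. The only mild subtlety is the a.s.\ qualifier, which is handled by the disposal of a null set into $A_1$ so that the $A_k$ genuinely partition $\Omega$.
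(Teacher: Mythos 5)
Your proof is correct and follows essentially the same approach as the paper, which simply notes the forward direction is clear from the construction and handles the converse by setting $A_k := \{\tau = t_k\}$; you have filled in the routine verification details (measurability of the $A_k$, the stopping-time property of $\tau$, and null-set bookkeeping, all of which go through since the filtration is augmented).
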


\begin{proof}

It is clear from the construction that such a $\tau$ is a $\mathbb{F}$-stopping time and $\tau\sim\mu$. The converse follows by taking a stopping time $\tau$ such that $\tau\sim\mu$ and defining the sets $A_k := \{\tau=t_k\}$ for each $k\in\{1,\ldots,r\}$.

\end{proof}

With this in mind, we can immediately explicitly construct a stopping time with given distribution.

\begin{cor}\label{Cor:ExistenceConstruction}
There exists a stopping time $\tau$ such that $\tau\sim\mu$.
\end{cor}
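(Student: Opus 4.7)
The plan is to invoke Lemma~\ref{Lem:PartitionCharacterization} and construct the partition $\{A_1,\ldots,A_r\}$ explicitly from the Brownian increments. The key observation is that each increment $W_{t_k}-W_{t_{k-1}}$ is Gaussian (hence has a diffuse distribution on $\mathbb{R}$) and is independent of $\mathcal{F}_{t_{k-1}}$, which gives us enough ``randomness'' at each stage to carve out a set of any prescribed probability.

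More concretely, I would proceed by induction on $k \in \{1,\ldots,r-1\}$. Suppose that $A_1,\ldots,A_{k-1}$ have already been constructed as pairwise disjoint sets with $A_j \in \mathcal{F}_{t_j}$ and $\mathbb{P}[A_j] = p_j$. Set $C_{k-1} := \Omega \setminus \bigcup_{j=1}^{k-1} A_j \in \mathcal{F}_{t_{k-1}}$, which has probability $q_{k-1} := 1 - (p_1+\cdots+p_{k-1}) > 0$. Since $W_{t_k}-W_{t_{k-1}}$ is a centered Gaussian with variance $\Delta t_k>0$, its distribution is continuous and the function $a \mapsto \mathbb{P}[W_{t_k}-W_{t_{k-1}} \leq a]$ is continuous with range $(0,1)$. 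Therefore I can choose $a_k \in \mathbb{R}$ so that
\[
\mathbb{P}\bigl[W_{t_k}-W_{t_{k-1}} \leq a_k\bigr] = \frac{p_k}{q_{k-1}} \in (0,1],
\]
and define $A_k := C_{k-1} \cap \{W_{t_k}-W_{t_{k-1}} \leq a_k\}$. Then $A_k \in \mathcal{F}_{t_k}$, $A_k$ is disjoint from $A_1,\ldots,A_{k-1}$, and by the independence of $W_{t_k}-W_{t_{k-1}}$ from $\mathcal{F}_{t_{k-1}}$,
\[
\mathbb{P}[A_k] = \mathbb{P}[C_{k-1}] \cdot \mathbb{P}\bigl[W_{t_k}-W_{t_{k-1}} \leq a_k\bigr] = q_{k-1} \cdot \frac{p_k}{q_{k-1}} = p_k.
\]
After the induction, I define $A_r := \Omega \setminus \bigcup_{k=1}^{r-1} A_k$, which is $\mathcal{F}_{t_{r-1}} \subset \mathcal{F}_{t_r}$-measurable and has probability $p_r$ by construction.

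Applying Lemma~\ref{Lem:PartitionCharacterization}, the stopping time $\tau := \sum_{k=1}^r t_k \, 1_{A_k}$ is then $\mathbb{F}$-adapted with distribution $\mu$. Independence of $\mathcal{F}_0$ comes for free, since $\mathcal{F}_0$ consists only of $\mathbb{P}$-null sets and their complements under the usual augmentation. There is no genuine obstacle here: the argument is a routine consequence of the diffuse law of Brownian increments together with independence, and the only thing one needs to be slightly careful about is handling the base case $k=1$ (where $C_0 = \Omega$ and $q_0=1$) and the final index $k=r$ (which is forced by the complement, guaranteeing the $p_k$'s sum to one).
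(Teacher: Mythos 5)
Your proof is correct and is essentially identical to the paper's: both build the partition inductively by intersecting the complement of the earlier sets with a threshold event on the increment $W_{t_k}-W_{t_{k-1}}$, calibrated (via the diffuse Gaussian law and independence from $\mathcal{F}_{t_{k-1}}$) to achieve conditional probability $p_k/(p_k+\cdots+p_r)$; the paper simply writes the threshold explicitly as $\sqrt{\Delta t_k}\,\Phi^{-1}(p_k/(p_k+\cdots+p_r))$ where you leave it as $a_k$. Your write-up is a bit more detailed in spelling out measurability, independence, and the induction, but there is no difference in approach.
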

\begin{proof}
Define a partition $\{A_1,\ldots,A_r\}$ of $\Omega$ as
\begin{eqnarray}
A_1 & := & \left\{W_{t_1}-W_0 \leq \sqrt{t_1}\,\Phi^{-1}\left(p_1\right)\right\} \nonumber\\
A_2 & := & \left\{W_{t_2}-W_{t_1} \leq \sqrt{t_2-t_1}\,\Phi^{-1}\left(\frac{p_2}{p_2+\cdots+p_r}\right)\right\} \setminus A_1 \nonumber\\
& \vdots \nonumber\\
A_k & := & \left\{W_{t_k}-W_{t_{k-1}} \leq \sqrt{t_k-t_{k-1}}\,\Phi^{-1}\left(\frac{p_k}{p_k+\cdots +p_r}\right)\right\} \setminus \left( A_1\cup\cdots\cup A_{k-1} \right) \nonumber\\
& \vdots \nonumber\\
A_r & := & \Omega \setminus \left( A_1 \cup \cdots \cup A_{r-1} \right),\nonumber
\end{eqnarray}
where $\Phi$ is the cumulative distribution function of the standard normal distribution. It is clear that $A_k$ is $\mathcal{F}_{t_k}$-measurable with $\mathbb{P}\left[A_k\right]=p_k$ for each $k\in\{1,\ldots,r\}$. Then, by Lemma \ref{Lem:PartitionCharacterization}, $\tau := \sum_{k=1}^r t_k\,1_{A_k}$ defines a stopping time with $\tau\sim\mu$.
\end{proof}

The proof above constructs a stopping time which roughly stops when there are events in the left-tail of a distribution. However, one could easily modify the construction to stop in right-tail events, events near the median, or on the image of any Borel set of appropriate measure under $\Phi$.

While this construction may suggest converting the distribution-constrained optimal stopping problem into optimization over Borel sets of specified measure, we emphasize next that there is no reason to expect the stopping times to be measurable with respect to $\sigma(W_{t_1},\ldots,W_{t_r})$. In particular, in the next example, we show a construction of a distribution-constrained stopping time which is entirely path-dependent.

\begin{cor}\label{Cor:ExistenceConstruction2}
There exists a stopping time $\tau$, independent of $(W_{t_1},\ldots,W_{t_r})$, satisfying $\tau\sim\mu$.
\end{cor}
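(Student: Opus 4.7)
The plan is to mimic the construction in Corollary \ref{Cor:ExistenceConstruction}, but to replace the increments $W_{t_k}-W_{t_{k-1}}$ (which carry full information about the skeleton $(W_{t_1},\dots,W_{t_r})$) with auxiliary random variables built from the Brownian motion on the \emph{open} intervals $(t_{k-1},t_k)$. By the Brownian bridge decomposition these will be jointly independent of the skeleton, yet still $\mathcal F_{t_k}$-measurable, which is exactly what is needed.

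First I would set, for each $k\in\{1,\dots,r\}$,
\[
U_k := W_{(t_{k-1}+t_k)/2} \;-\; \tfrac{1}{2}\bigl(W_{t_{k-1}}+W_{t_k}\bigr),
\]
which is the midpoint value of the canonical Brownian bridge on $[t_{k-1},t_k]$ and is therefore centered Gaussian with variance $\Delta t_k/4$. Invoking the standard bridge decomposition
\[
W_s \;=\; W_{t_{k-1}} + \tfrac{s-t_{k-1}}{\Delta t_k}(W_{t_k}-W_{t_{k-1}}) + \tilde W^{k}_{s},\qquad s\in[t_{k-1},t_k],
\]
where the $\tilde W^{k}$ are Brownian bridges jointly independent of the skeleton $(W_{t_0},\dots,W_{t_r})$ and of each other, one sees that $(U_1,\dots,U_r)$ has independent coordinates and is jointly independent of $(W_{t_0},\dots,W_{t_r})$. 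Setting $V_k := \Phi\!\bigl(2 U_k/\sqrt{\Delta t_k}\bigr)$ then yields i.i.d.\ $\mathrm{Unif}[0,1]$ random variables, each $V_k$ being $\mathcal F_{(t_{k-1}+t_k)/2}$-measurable (hence $\mathcal F_{t_k}$-measurable) and $(V_1,\dots,V_r)$ jointly independent of $(W_{t_1},\dots,W_{t_r})$.

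Next I would build the partition as in Corollary \ref{Cor:ExistenceConstruction}, using the $V_k$ in place of the skeleton increments:
\begin{align*}
A_1 & := \bigl\{V_1 \le p_1\bigr\},\\
A_k & := \left\{V_k \le \frac{p_k}{p_k+\cdots+p_r}\right\} \setminus (A_1\cup\cdots\cup A_{k-1}), \qquad 2\le k\le r-1,\\
A_r & := \Omega \setminus (A_1\cup\cdots\cup A_{r-1}).
\end{align*}
Joint independence and uniformity of the $V_k$'s give $\mathbb P[A_k]=p_k$ by the same telescoping computation as in Corollary \ref{Cor:ExistenceConstruction}, while $V_k\in\mathcal F_{t_k}$ gives $A_k\in\mathcal F_{t_k}$. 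Lemma \ref{Lem:PartitionCharacterization} then produces the desired stopping time $\tau := \sum_{k=1}^r t_k\,1_{A_k}$ with $\tau\sim\mu$. Finally, because $\tau$ is $\sigma(V_1,\dots,V_r)$-measurable and $(V_1,\dots,V_r)$ is jointly independent of $(W_{t_1},\dots,W_{t_r})$, we obtain the independence asserted in the statement.

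The only substantive step is the joint independence of $(U_1,\dots,U_r)$ and the skeleton, and I expect no real obstacle there: it is a standard consequence of the Brownian bridge decomposition combined with the fact that the bridges on the $r$ disjoint intervals are independent of one another and of the skeleton. Everything else is a routine translation of the construction in Corollary \ref{Cor:ExistenceConstruction}.
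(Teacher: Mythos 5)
Your construction is correct and follows essentially the same route as the paper's own proof: both build, on each interval $[t_{k-1},t_k]$, an $\mathcal F_{t_k}$-measurable random variable from the Brownian bridge that is independent of the skeleton $(W_{t_1},\ldots,W_{t_r})$, transform it to a uniform via its CDF, and then assemble the partition exactly as in Corollary~\ref{Cor:ExistenceConstruction}. The only difference is the choice of bridge functional — you use the bridge midpoint $U_k$ (Gaussian, so the CDF is the standard normal $\Phi$ after rescaling), whereas the paper uses the scaled absolute maximum of the bridge (CDF $\Phi_{BB}$); both choices work identically. One small slip worth flagging: $U_k$ explicitly contains $W_{t_k}$, so it is $\mathcal F_{t_k}$-measurable but \emph{not} $\mathcal F_{(t_{k-1}+t_k)/2}$-measurable as your parenthetical asserts. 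This does not affect the argument, since only $\mathcal F_{t_k}$-measurability is required for Lemma~\ref{Lem:PartitionCharacterization}.
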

\begin{proof}
Define a sequence of random variables $(M_1,\ldots,M_r)$ as
\begin{equation}\nonumber
M_k := \left(t_k-t_{k-1}\right)^{-1/2}\max\limits_{t_{k-1}\leq s\leq t_k} \left\vert W_s - W_{t_{k-1}}-(s-t_{k-1})\frac{W_{t_k}-W_{t_{k-1}}}{t_k-t_{k-1}} \right\rvert
\end{equation}
for each $k\in\{1,\ldots,r\}$. Then each $M_k$ is the absolute maximum of a Brownian bridge over $[t_{k-1},t_k]$, scaled by the length of the time interval. In particular, each $M_k$ is $\mathcal{F}_{t_k}$-measurable, independent of $(W_{t_1},\ldots,W_{t_r})$, and equal in distribution to the absolute maximum of a standard Brownian bridge on $[0,1]$, the cumulative distribution function of which we denote by $\Phi_{BB}$.

Define a partition $\{A_1,\ldots,A_r\}$ of $\Omega$ as
\begin{eqnarray}
A_1 & := & \left\{M_1 \leq \Phi_{BB}^{-1}\left(p_1\right)\right\} \nonumber\\
A_2 & := & \left\{M_2 \leq \Phi_{BB}^{-1}\left(\frac{p_2}{p_2+\cdots +p_r}\right)\right\} \setminus A_1 \nonumber\\
& \vdots \nonumber\\
A_k & := & \left\{M_k \leq \Phi_{BB}^{-1}\left(\frac{p_k}{p_k+\cdots+p_r}\right)\right\} \setminus \left( A_1\cup\cdots\cup A_{k-1} \right) \nonumber\\
& \vdots \nonumber\\
A_r & := & \Omega \setminus \left( A_1 \cup \cdots \cup A_{r-1} \right).\nonumber
\end{eqnarray}
It is clear that $A_k$ is $\mathcal{F}_{t_k}$-measurable with $\mathbb{P}\left[A_k\right]=p_k$ for each $k\in\{1,\ldots,r\}$. Then, by Lemma \ref{Lem:PartitionCharacterization}, $\tau := \sum_{k=1}^r t_k\,1_{A_k}$ defines a stopping time with $\tau\sim\mu$ which is independent of $(W_{t_1},\ldots,W_{t_r})$.
\end{proof}

Clearly, the stopping time constructed above is an admissible stopping time in the distribution-constrained optimal stopping problem, but there is no hope to express it in terms of the value of the Brownian motion at each potential time to stop. While stopping times involving the Brownian bridge may seem unnatural at first, their use is a key idea in the proofs of Lemma \ref{Lem:WeakDPPLemma} and Lemma \ref{Lem:TerminalRelaxation}.

Another useful result obtained from Lemma~\ref{Lem:PartitionCharacterization} is the following approximation result.
\begin{prop}\label{Proposition:ApproximateStoppingTimeConstruction}
Fix $p,p'\in[0,1]^r$ satisfying $p_1+\cdots+p_r=1$ and $p'_1+\cdots +p'_r=1$. For any $\tau\in\mathcal{T}$ such that
\[\tau \sim \sum\limits_{k=1}^r p_k\delta_{t_k},\]
there exists $\tau'\in\mathcal{T}$ such that
\[\tau' \sim \sum\limits_{k=1}^r p'_k\delta_{t_k},\]
which satisfies
\[\mathbb{P}\left[\tau\neq\tau'\right] \leq 4^r \|p-p'\|_{\ell^1}.\]
\end{prop}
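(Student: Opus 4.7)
The plan is induction on $r$, modifying the partition $\{A_1,\ldots,A_r\}$ associated to $\tau$ by Lemma~\ref{Lem:PartitionCharacterization} one index at a time. The case $r=1$ forces $\tau=\tau'=t_1$ trivially. For the inductive step I would first adjust only $A_1$: choose $A_1'\in\mathcal{F}_{t_1}$ with $\mathbb{P}[A_1']=p_1'$ and maximal overlap with $A_1$, namely $A_1'\subset A_1$ when $p_1'\le p_1$ and $A_1'\supset A_1$ otherwise. This costs exactly $\mathbb{P}[A_1\triangle A_1']=|p_1-p_1'|$ and relies on the atomlessness of $\mathcal{F}_{t_1}$: for any $S\in\mathcal{F}_{t_1}$ of positive measure and any $c\in[0,\mathbb{P}[S]]$, the map $a\mapsto\mathbb{P}[S\cap\{W_{t_1/2}\le a\}]$ is continuous on $\mathbb{R}$ with limits $0$ and $\mathbb{P}[S]$, so the intermediate value theorem produces $S'\in\mathcal{F}_{t_1}$, $S'\subset S$, with $\mathbb{P}[S']=c$. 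Applying this inside $A_1$ (resp.\ inside $\Omega\setminus A_1$) realizes the desired $A_1'$.

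Next I would partition $\Omega\setminus A_1'$ into $\mathcal{F}_{t_k}$-measurable sets $A_2',\ldots,A_r'$ of measures $p_2',\ldots,p_r'$ by first forming a preliminary partition $\{\hat A_k\}_{k=2}^r$ of $\Omega\setminus A_1'$ from the original data. When $p_1'\le p_1$ I lump the leftover $A_1\setminus A_1'$ into $\hat A_r$ (so $\hat A_k=A_k$ for $k<r$ and $\hat A_r=A_r\cup(A_1\setminus A_1')$); when $p_1'>p_1$ I set $\hat A_k=A_k\setminus C$ with $C:=A_1'\setminus A_1$. In both cases $\hat A_k\in\mathcal{F}_{t_k}$ and $\sum_{k\ge2}\mathbb{P}[\hat A_k]=1-p_1'$. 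Then I would apply the inductive hypothesis to the $r-1$ times $t_2,\ldots,t_r$ on the residual $\Omega\setminus A_1'$ to replace $\{\hat A_k\}$ by $\{\tilde A_k\}$ of measures $p_k'$ with controlled symmetric differences, and set $A_k':=\tilde A_k$.

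For the induction to close on a strict subset of $\Omega$, I would prove a mildly stronger statement allowing the ambient set to be any $\mathcal{F}_{s_0}$-measurable $S$ with partition measures summing to $\mathbb{P}[S]$; the slicing property transfers to such subspaces because $S\in\mathcal{F}_{s_0}\subset\mathcal{F}_{s_k}$. The central quantitative step is the estimate $\|\hat p-p'\|_{\ell^1}\le\|p-p'\|_{\ell^1}$, which in each case reduces by the triangle inequality to $\sum_{k\ge2}|p_k-p_k'|+|p_1-p_1'|$ after absorbing either $p_1-p_1'$ into the $k=r$ term or $\sum_{k\ge2}\mathbb{P}[A_k\cap C]=\mathbb{P}[C]=p_1'-p_1$ across the indices. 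Combining the inductive bound $\sum_{k\ge2}\mathbb{P}[\hat A_k\setminus\tilde A_k]\le 4^{r-1}\|p-p'\|_{\ell^1}$ with the index-one contribution $|p_1-p_1'|$ (coming from $A_1\setminus A_1'$ in the first case or from the sets $A_k\cap C$ in the second) yields $\mathbb{P}[\tau\ne\tau']\le(1+4^{r-1})\|p-p'\|_{\ell^1}\le 4^r\|p-p'\|_{\ell^1}$, since $1+4^{r-1}\le 4^r$ for $r\ge 1$. The main obstacles are establishing the slicing property without introducing extra randomness outside $\mathbb{F}$, and packaging the induction so that the recursion closes on the residual set $\Omega\setminus A_1'$.
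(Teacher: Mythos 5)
Your proposal is correct, and it reaches the same conclusion via a genuinely different packaging of the argument. The paper's proof also begins from the partition characterization (Lemma~\ref{Lem:PartitionCharacterization}) and also uses atomlessness of Brownian functionals to slice $\mathcal{F}_{t_k}$-measurable sets, but it then builds $A_1',\ldots,A_r'$ by a direct, non-recursive sweep: at stage $k$ it forms $A_k \setminus (A_1'\cup\cdots\cup A_{k-1}')$, adjusts its mass by slicing, and controls $\mathbb{P}[A_k\triangle A_k']$ through the recurrence $\mathbb{P}[A_k\triangle A_k'] \leq |p_k-p_k'| + \sum_{\ell<k}2\,\mathbb{P}[A_\ell\triangle A_\ell']$, whose unrolling produces the geometric factor $4^r$. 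You instead do induction on $r$, adjust only $A_1$, and hand the residual $\Omega\setminus A_1'$ to a \emph{strengthened} inductive hypothesis that allows an arbitrary $\mathcal{F}_{s_0}$-measurable ambient set with unnormalized masses. The load-bearing estimate in your route is $\|\hat p - p'\|_{\ell^1} \leq \|p-p'\|_{\ell^1}$, which both cases (absorbing $A_1\setminus A_1'$ into $\hat A_r$; or removing $C=A_1'\setminus A_1$ from each $A_k$) satisfy, and which replaces the paper's compounding $\sum_\ell 2\,\mathbb{P}[A_\ell\triangle A_\ell']$ term. The payoff of your organization is that each recursion level is confined to the previous residual, so there is no overlap bookkeeping and the constant recursion is $c_r \leq 1 + c_{r-1}$; you conservatively plug in $c_{r-1}\leq 4^{r-1}$ and observe $1+4^{r-1}\leq 4^r$, which closes the induction, but the same recursion actually yields the sharper $c_r = r-1$. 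The cost is having to formulate and check the stronger statement on general $S\in\mathcal{F}_{s_0}$, which you correctly identify (and correctly note that the slicing argument transfers since any positive-measure $\mathcal{F}_{t_k}$-measurable subset of $S$ still has atomless conditional distribution of, say, $W_{t_k}$). Both proofs work; yours is arguably cleaner and gives a better constant, while the paper's avoids the need for a generalized ambient-set statement.
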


\begin{proof}
\textit{Step 1: }By Lemma~\ref{Lem:PartitionCharacterization}, there exists a partition $\{A_1,\ldots,A_r\}$ of $\Omega$ such that each $A_k$ is $\mathcal{F}_{t_k}$-measurable with $\mathbb{P}\left[A_k\right] = p_k$ and
\[\tau = \sum\limits_{k=1}^r t_k 1_{A_k},\]
almost-surely. The goal is to define $\tau'$ in terms of a related partition.

We first make a key observation: for any $t > 0$, $\theta\in(0,1)$, and $\mathcal{F}_t$-measurable set $A$ such that $\mathbb{P}\left[A\right]>0$, there exists a real number $w\in\mathbb{R}$ such that
\[\mathbb{P}\left[W_t \geq w \mid A\right] = \theta.\]
This follows immediately from the observation that the distribution of $W_t$ has no atoms, and thus when conditioning on an event of non-zero probability, the conditional distribution cannot have atoms.

\vspace{5mm}\textit{Step 2:} Define an $\mathcal{F}_{t_1}$-measurable set $A_1'$ as
\[A_1' := \left\{\begin{array}{cl}
A_1 & p_1 = p_1' \\
A_1\cup\{W_{t_1} \geq w^+_1\} & p_1 < p_1' \\
A_1\cap\{W_{t_1} \geq w^-_1\} &  p_1 > p_1'
\end{array}\right.\]
where $w^+_1,w^-_1\in\mathbb{R}$ are chosen such that
\[\mathbb{P}\left[W_{t_1}\geq w^+_1 \mid \Omega\setminus A_1\right] = \frac{p_1'-p_1}{1-p_1},\qquad\mathbb{P}\left[W_{t_1}\geq w^-_1 \mid A_1\right]=\frac{p_1'}{p_1}.\]
Notice, if $p_1 < p_1'$, then $p_1 < 1$ and $\mathbb{P}\left[\Omega\setminus A_1\right]>0$, so $w^+_1$ is well-defined. Similarly, if $p_1>p_1'$, then $p_1>0$ and $\mathbb{P}\left[A_1\right]>0$, so $w^-_1$ is well-defined.

It is clear that $\mathbb{P}\left[A_1'\right] = p_1'$ by construction. The key property, however, is that either $A_1\subseteq A_1'$ or $A_1'\subseteq A_1$. From this, we can immediately compute the measure of the symmetric difference of $A_1$ and $A_1'$,
\[\mathbb{P}\left[A_1 \triangle A'_1\right] = \left|p_1-p_1'\right|.\]

\vspace{5mm}\textit{Step 3:} Now, suppose that we have constructed $\{A'_1,\ldots,A'_{k-1}\}$ already. Define
\[q_k := \mathbb{P}\left[A_k\setminus(A'_1\cup\cdots\cup A'_{k-1})\right].\]
Define a $\mathcal{F}_{t_k}$-measurable set $A'_k$ as
\[A_k' := \left\{\begin{array}{cl}
A_k\setminus(A'_1\cup\cdots\cup A'_{k-1}) & q_k = p_k' \\
(A_k\setminus(A'_1\cup\cdots\cup A'_{k-1}))\cup\{W_{t_k} \geq w^+_k\} & q_k < p_k' \\
(A_k\setminus(A'_1\cup\cdots\cup A'_{k-1}))\cap\{W_{t_k} \geq w^-_k\} &  q_k > p_k'
\end{array}\right.\]
where $w^+_k,w^-_k\in\mathbb{R}$ are chosen such that
\[\begin{array}{l}
\mathbb{P}\left[W_{t_k}\geq w^+_k \mid (A'_1\cup\cdots\cup A'_{k-1})\cup(\Omega\setminus A_k)\right] = \frac{p_k'-q_k}{1-q_k}, \\ 
\mathbb{P}\left[W_{t_k}\geq w^-_k \mid A_k\setminus(A'_1\cup\cdots\cup A'_{k-1})\right]=\frac{p_k'}{q_k}.
\end{array}\]
As before, the inequalities between $q_k$ and $p_k'$ imply that $w^+_k$ and $w^-_k$ are well-defined when they are needed. Furthermore, it is clear that $\mathbb{P}\left[A_k'\right] = p_k'$.

In this case, the key property becomes that either
\begin{equation}\label{Equation:ApproximationCase1}
A_k\setminus(A'_1\cup\cdots\cup A'_{k-1}) \subseteq A_k'
\end{equation}
or
\begin{equation}\label{Equation:ApproximationCase2}
A_k' \subseteq A_k\setminus(A'_1\cup\cdots\cup A'_{k-1}) \subseteq A_k.
\end{equation}
First, we consider the case that \eqref{Equation:ApproximationCase1} holds. Because each set in $\{A_1,\ldots,A_k\}$ is disjoint, for any $\ell\in\{1,\ldots,k-1\}$, we can bound the overlap between $A_k$ and $A'_\ell$ by the symmetric difference of $A_\ell$ and $A'_\ell$,
\[\mathbb{P}\left[A_k\cap A'_\ell\right] \leq \mathbb{P}\left[ A'_\ell \setminus A_\ell \right] \leq \mathbb{P}\left[A_\ell \triangle A'_\ell\right].\]
Using \eqref{Equation:ApproximationCase1}, we can compute
\[\mathbb{P}\left[A_k\triangle A'_k\right] = \mathbb{P}\left[A'_k\right] - \mathbb{P}\left[A_k\right] + 2\mathbb{P}\left[A_k\setminus A'_k\right] \leq \left|p_k-p'_k\right| + 2\mathbb{P}\left[A_k\cap(A'_1\cup\cdots\cup A'_{k-1})\right].\]
Using the previous inequalities we deduce
\[\mathbb{P}\left[ A_k \triangle A'_k \right] \leq \left|p_k-p_k'\right| + \sum\limits_{\ell=1}^{k-1} 2\mathbb{P}\left[A_k\cap A'_\ell\right] \leq \left|p_k-p_k'\right| + \sum\limits_{\ell=1}^{k-1} 2\mathbb{P}\left[A_\ell\triangle A'_\ell\right].\]
In the case that \eqref{Equation:ApproximationCase2} holds, this same inequality immediately follows.

\vspace{5mm}\textit{Step 4:} By induction on $k\in\{1,\ldots,r\}$, we construct a disjoint partition $\{A'_1,\ldots,A'_r\}$ of $\Omega$ such that $A'_k$ is $\mathcal{F}_{t_k}$-measurable and $\mathbb{P}\left[A'_k\right]=p'_k$. Furthermore, we obtain a rough bound
\[\mathbb{P}\left[A_k\triangle A'_k\right] \leq \sum\limits_{\ell=1}^k 4^{k-\ell}\left|p_\ell-p'_\ell\right|.\]
By Lemma~\ref{Lem:PartitionCharacterization}, there exists a stopping time $\tau'$ of the form
\[\tau' = \sum\limits_{k=1}^r t_k 1_{A'_k}.\]
Then we can immediately compute\footnote{We emphasize that the preceding inequalities are hardly sharp. An immediate question is whether a sharper approximation result, with a constant that scales favorably as $r\to\infty$, exists.}
\[\mathbb{P}\left[\tau\neq\tau'\right]\leq\sum\limits_{k=1}^r \mathbb{P}\left[A_k\triangle A_k'\right] \leq \sum\limits_{k=1}^r 4^r\left|p_k-p'_k\right| \leq 4^r\|p-p'\|_{\ell^1}.\]
\end{proof}
The technical importance of this result is, of course, that it allows us to obtain continuity in the problem with respect to changes in the distribution constraint on the stopping times.

While we have demonstrated that a lot can be said about distribution-constrained stopping times using the representation in Lemma~\ref{Lem:PartitionCharacterization}, it turns out that we can obtain a more manageable representation if we introduce extra controlled processes which represent the conditional probability of the stopping time taking on each possible value. This vector-valued stochastic process is a martingale in a probability simplex. In the next result, we make clear the connection between this process and a distribution-constrained stopping time.

In the remainder of the paper, we define $\mathcal{A}$ to denote the collection of all progressively-measurable, square-integrable, $\mathbb{R}^r$-valued processes which are independent of $\mathcal{F}_0$. We also denote
\[Y^{t,y,\alpha}_u := y + \int_t^u \alpha_s dW_s\]
for all $(t,y)\in[0,\infty)\times\mathbb{R}^r$, $u\in[t,\infty)$, and $\alpha\in\mathcal{A}$. When needed, we will denote the $k$th coordinate of this vector-valued process by $Y^{(k),t,y,\alpha}$. We will occasionally abuse notation and leave out superscripts when they are clearly implied by the context.

We also denote by $\Delta$ the following closed and convex set
\[\Delta := \left\{y=(y_1,\ldots,y_r)\in[0,1]^r \mid y_1 + \cdots + y_r = 1\right\} \subset \mathbb{R}^r.\]
We then can state a lemma regarding a characterization of distribution-constrained stopping times in terms of a state-constrained controlled martingale.

\begin{lem}\label{Lem:ControlCharacterization}
A stopping time $\tau \in \mathcal{T}$ has the distribution $\mu$ if and only if it is of the form
\begin{equation}\nonumber
\tau = \min\limits_{k\in\{1,\ldots,r\}}\left\{ t_k \mid Y^{(k),0,p,\alpha}_{t_k} = 1\right\},
\end{equation}
almost-surely, for some $\alpha\in\mathcal{A}$ such that
\[Y^{0,p,\alpha}_t \in \Delta,\]
almost-surely, for all $t\geq 0$, and 
\[Y^{(k),0,p,\alpha}_{t_k} \in \{0,1\},\]
almost-surely, for each $k\in\{1,\ldots,r\}$.
\end{lem}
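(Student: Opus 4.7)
The plan is to interpret the vector-valued martingale $Y^{0,p,\alpha}_t$ coordinate-wise as the conditional probabilities $\mathbb{P}[\tau=t_k\mid\mathcal{F}_t]$ of stopping at each possible atom, given the information up to time $t$. Under this interpretation, the simplex constraint $Y\in\Delta$ reflects that the conditional probabilities sum to one, and the terminal condition $Y^{(k)}_{t_k}\in\{0,1\}$ captures that by time $t_k$ we must know whether $\tau=t_k$ or not. The iff is then a translation between the partition description from Lemma~\ref{Lem:PartitionCharacterization} and this martingale description, which we establish in the two directions separately.

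For the direction ``$\tau\sim\mu$ implies such an $\alpha$ exists'': apply Lemma~\ref{Lem:PartitionCharacterization} to obtain the partition $\{A_1,\ldots,A_r\}$ of $\Omega$, with $A_k\in\mathcal{F}_{t_k}$ and $\mathbb{P}[A_k]=p_k$. Define the Doob martingale $Y^{(k)}_t:=\mathbb{E}[1_{A_k}\mid\mathcal{F}_t]$ for each $k$, noting that $Y^{(k)}_0=p_k$ under the standing convention that $\mathcal{F}_0$ is trivial (up to null sets). Each $Y^{(k)}$ is a bounded continuous martingale with respect to the Brownian filtration, so the Brownian martingale representation theorem produces a progressively measurable, square-integrable $\alpha^{(k)}$ with $Y^{(k)}_t=p_k+\int_0^t\alpha^{(k)}_s\,dW_s$; stacking these gives $\alpha\in\mathcal{A}$. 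The simplex constraint $Y_t\in\Delta$ follows from $Y^{(k)}_t\in[0,1]$ and $\sum_k Y^{(k)}_t=\mathbb{E}[1_\Omega\mid\mathcal{F}_t]=1$. Since $A_k\in\mathcal{F}_{t_k}$, we have $Y^{(k)}_{t_k}=1_{A_k}\in\{0,1\}$, and the identity $\tau=\min\{t_k:Y^{(k)}_{t_k}=1\}$ holds because the $A_k$ are disjoint.

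For the converse, given such an $\alpha$, define $A_k:=\{Y^{(k)}_{t_k}=1\}\in\mathcal{F}_{t_k}$; the task is to show that $\{A_1,\ldots,A_r\}$ is a partition of $\Omega$ with $\mathbb{P}[A_k]=p_k$, after which Lemma~\ref{Lem:PartitionCharacterization} closes the argument. The main observation is the following rigidity property of the coordinates: each $Y^{(j)}$ is a continuous non-negative martingale (non-negativity from $Y\in\Delta$), so once it hits $0$ it must remain at $0$, and analogously once $Y^{(k)}_{t_k}=1$ the simplex constraint forces $Y^{(j)}_{t_k}=0$ for $j\neq k$ which then propagates forward in time. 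This gives disjointness of the $A_k$ and, propagating up to time $t_r$, shows that $Y^{(k)}_{t_r}\in\{0,1\}$ for every $k$; combined with $\sum_k Y^{(k)}_{t_r}=1$, exactly one coordinate equals $1$ at time $t_r$, so $\bigcup_k A_k=\Omega$ almost surely. Finally, $\mathbb{P}[A_k]=\mathbb{E}[Y^{(k)}_{t_k}]=Y^{(k)}_0=p_k$ by the martingale property.

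The main technical step is the rigidity argument in the converse direction, where one must carefully chain together non-negativity, continuity, the martingale property, and the simplex constraint to conclude that each coordinate is genuinely $\{0,1\}$-valued at time $t_r$; the forward direction is essentially bookkeeping once the martingale representation is invoked.
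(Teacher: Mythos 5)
Your proof is correct and follows essentially the same route as the paper's: both directions use the Doob martingale $\mathbb{E}[1_{\{\tau=t_k\}}\mid\mathcal{F}_t]$ together with the Brownian martingale representation theorem for the forward implication, and the absorption property of $[0,1]$-valued continuous martingales (once a coordinate hits $0$ it stays $0$, once it hits $1$ the simplex constraint pins it there) for the converse. The only cosmetic differences are that you make the rigidity argument explicit and route the converse through Lemma~\ref{Lem:PartitionCharacterization} rather than computing $\mathbb{P}[\tau=t_k]$ directly; also note the paper's $\mathcal{F}_0$ is not literally trivial, but the required $Y^{(k)}_0=p_k$ follows from the required independence of $\tau$ from $\mathcal{F}_0$, so the conclusion stands.
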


\begin{proof}

\textit{Step 1:} Let $\alpha\in\mathcal{A}$ be a control for which $Y^{0,p,\alpha}_t \in \Delta$, almost-surely, for all $t\geq 0$ and $Y^{(k),0,p,\alpha}_{t_k}\in\{0,1\}$, almost-surely, for each $k\in\{1,\ldots,r\}$. Define $\tau$ as
\[\tau := \min\limits_{k\in\{1,\ldots,r\}}\left\{ t_k \mid Y^{(k),0,p,\alpha}_{t_k} = 1\right\}.\]
It is clear from the properties above that $Y^{(k),0,p,\alpha}_{t_r}\in\{0,1\}$ for every $k\in\{1,\ldots,r\}$ and $Y^{0,p,\alpha}_{t_r} \in \Delta$, which implies that $\tau \leq t_r$, almost-surely. Then $\tau\in\mathcal{T}$, but we must check that it has $\mu$ as its distribution.

Fix $k\in\{1,\ldots,r\}$ and note that
\begin{equation}\nonumber
\mathbb{P}\left[\tau=t_k\right] = \mathbb{P}\left[\underbrace{\{Y^{(1),0,p,\alpha}_{t_1}=0\}\cap\cdots\cap\{Y^{(k-1),0,p,\alpha}_{t_{k-1}}=0\}}_A\cap\underbrace{\{Y^{(k),0,p,\alpha}_{t_k}=1\}}_B\right].
\end{equation}
Note that $B \subset A$ up to a set of measure zero because in the set $B\setminus A$, we have $Y^{(k),0,p,\alpha}_{t_k}=1$ as well as $Y^{(\ell),0,p,\alpha}_{t_\ell}=1$ for some $\ell<k$. Because $Y^{0,p,\alpha}$ is a martingale constrained to $\Delta$, this implies $Y^{(\ell),0,p,\alpha}_{t_k}=1$, almost-surely, which contradicts $Y^{0,p,\alpha}_{t_k}\in\Delta$. Then, we can conclude
\begin{equation}\nonumber
\mathbb{P}\left[\tau=t_k\right] = \mathbb{P}\left[Y^{(k),0,p,\alpha}_{t_k}=1\right]=p_k
\end{equation}
because $Y^{(k),0,p,\alpha}_0=p_k$ and $Y^{(k),0,p,\alpha}_t$ is a martingale taking values zero and one at $t_k$.

\vspace{5mm}\textit{Step 2:} Let $\tau\in\mathcal{T}$ be a stopping time such that $\tau\sim\mu$. Then define the $[0,1]^r$-valued process $\overline{Y}$ as
\begin{equation}\nonumber
\overline{Y}^{(k)}_t := \mathbb{E}\left[ 1_{\{\tau=t_k\}} \mid \mathcal{F}_t\right].
\end{equation}
Note that $\overline{Y}_0 = p$. By the Martingale Representation Theorem, there exists a control $\alpha\in\mathcal{A}$ for which $Y^{0,p,\alpha}_t = \overline{Y}_t$, almost-surely, for all $t\geq 0$. We can then check that,
\begin{equation}\nonumber
Y^{(1),0,p,\alpha}_t+\cdots+Y^{(r),0,p,\alpha}_t = \mathbb{E}\left[1_{\{\tau=t_1\}}+\cdots+1_{\{\tau=t_r\}}\mid\mathcal{F}_t\right] = 1,
\end{equation}
so $Y^{0,p,\alpha}_t\in\Delta$ for all $t\geq 0$, almost-surely. Finally, for any $k\in\{1,\ldots,r\}$, we have $Y^{(k),0,p,\alpha}_{t_k} = 1_{\{\tau=t_k\}}\in\{0,1\}$ because $\{\tau=t_k\}$ is $\mathcal{F}_{t_k}$-measurable.

Define a stopping time $\sigma$ as
\begin{equation}\nonumber
\sigma := \min\limits_{k\in\{1,\ldots,r\}}\left\{t_k\mid Y^{(k),0,p,\alpha}_{t_k}=1\right\}
\end{equation}
and suppose that there exists a set $A$ of non-zero probability on which $\tau\neq\sigma$. Then, for some $k,\ell\in\{1,\ldots,r\}$ such that $k\neq\ell$, the set $B:=A\cap\{\tau=t_k\}\cap\{\sigma=t_\ell\}$ has non-zero probability.

Suppose that $\ell< k$. Then $Y^{(\ell),0,p,\alpha}_{t_\ell}=1$ on $B$ and because $Y^{0,p,\alpha}$ is a martingale constrained to $\Delta$, it follows that $Y^{(\ell),0,p,\alpha}_{t_k}=1$ on $B$, and consequently, $Y^{(k),0,p,\alpha}_{t_k}=1_{\{\tau=t_k\}}=0$, which contradicts $\tau=t_k$ on $B$. On the other hand, suppose that $\ell >k$. Then $Y^{(k),0,p,\alpha}_{t_k}\neq 1$ on $B$, but because $Y^{(k),0,p,\alpha}_{t_k}=1_{\{\tau=t_k\}}$ this also contradicts $\tau=t_k$ on $B$. We conclude $\tau=\sigma$, almost-surely.
\end{proof}

\subsection{Solution via iterated stochastic control}\label{Section:IteratedControl}

It is convenient to define a sequence of sets which will be important in the remainder of the paper. For each $k\in\{1,\ldots,r\}$, define
\[\Delta_k := \{ (y_1,\ldots,y_r) \in \Delta \mid y_\ell = 0\text{ for each }\ell\in\{1,\ldots,k-1\}\} \subseteq \Delta.\]
Note that each set is closed and convex and $\Delta_{k+1} \subset \Delta_k$ for each $k\in\{1,\ldots,r-1\}$.

We also introduce subcollections of stopping times and controls with additional independence properties, which will be used later in proofs of the Dynamic Programming Principle. In particular, for any $t\in[0,\infty)$ we define a subcollection of stopping times
\[\mathcal{T}_t := \left\{ \tau \in \mathcal{T} \mid \tau\text{ is independent of }\mathcal{F}_t\right\} \subset \mathcal{T}\]
and a subcollection of controls
\[\mathcal{A}_t := \left\{ \alpha \in \mathcal{A} \mid \alpha\text{ is independent of }\mathcal{F}_t\right\} \subset \mathcal{A}.\]

We then define a sequence of iterated distribution-constrained optimal stopping problems.

\begin{defn}
For each $k\in\{1,\ldots,r\}$, define a function $v_k : \mathbb{R}\times\Delta_k\to\mathbb{R}$ as
\begin{equation}\label{Eqn:IteratedStoppingRepresentation}
\begin{array}{rccl}
v_k(x,y) & := & \sup\limits_{\tau\in\mathcal{T}_{t_{k-1}}} & \mathbb{E}\left[f(X^{t_{k-1},x}_\tau)\right] \\
& & \text{s.t.} & \tau \sim \sum_{\ell=1}^r y_\ell \delta_{t_\ell}.
\end{array}\end{equation}
\end{defn}
Note that $v^\star = v_1(x_0,p)$. We emphasize that while each $v_k$ is written as a function depending on an entire tuple $y=(y_1,\ldots,y_r)\in\Delta_k$, we have $y_1=\cdots=y_{k-1}=0$ by the definition of $\Delta_k$.

Our goal is to convert these iterated distribution-constrained optimal stopping problems into iterated state-constrained stochastic control problems.

First, we record a growth and continuity estimate for each $v_k$.

\begin{prop}\label{Proposition:LipschitzEstimate}
There exists $C>0$, which depends only on $f$ and $\mu$, for which
\begin{eqnarray}
\left|v_k(x,y)\right| & \leq & C\left(1+|x|\right) \nonumber\\
\left|v_k(x,y) - v_k(x',y')\right| & \leq & C\left(\left|x-x'\right|+\|y-y'\|^{1/2}\right)\nonumber
\end{eqnarray}
for each $k\in\{1,\ldots,r\}$ and all $(x,y),(x',y')\in\mathbb{R}\times\Delta_k$.
\end{prop}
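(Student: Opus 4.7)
\medskip
\noindent\textbf{Proof plan for Proposition \ref{Proposition:LipschitzEstimate}.}
The plan is to handle the three estimates (growth, Lipschitz in $x$, and Hölder-$1/2$ in $y$) separately, and to reduce the only non-trivial one to the coupling result in Proposition~\ref{Proposition:ApproximateStoppingTimeConstruction}.

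\medskip
\noindent\emph{Growth bound.} Since $f$ is Lipschitz with some constant $L$, we have $|f(z)|\le |f(0)|+L|z|$. For any $\tau\in\mathcal{T}_{t_{k-1}}$ admissible in \eqref{Eqn:IteratedStoppingRepresentation} we have $\tau\le t_r$ almost surely, so by Jensen's inequality together with Doob's $L^{2}$ maximal inequality,
\[
\mathbb{E}\bigl[|X^{t_{k-1},x}_\tau|\bigr]\le |x|+\mathbb{E}\Bigl[\sup_{u\in[t_{k-1},t_r]}|W_u-W_{t_{k-1}}|\Bigr]\le |x|+2\sqrt{t_r-t_{k-1}}.
\]
Taking the supremum over admissible $\tau$ yields $|v_k(x,y)|\le C(1+|x|)$ with $C$ depending only on $f$ and $t_r$.

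\medskip
\noindent\emph{Lipschitz in $x$.} Any stopping time $\tau$ admissible for $v_k(x,y)$ is also admissible for $v_k(x',y)$, since admissibility depends only on $y$ and the filtration. Since $X^{t_{k-1},x}_\tau-X^{t_{k-1},x'}_\tau=x-x'$ deterministically, Lipschitzness of $f$ gives
\[
\bigl|\mathbb{E}[f(X^{t_{k-1},x}_\tau)]-\mathbb{E}[f(X^{t_{k-1},x'}_\tau)]\bigr|\le L|x-x'|,
\]
and taking the supremum on both sides yields $|v_k(x,y)-v_k(x',y)|\le L|x-x'|$.

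\medskip
\noindent\emph{Hölder-$1/2$ in $y$.} This is the step where I expect to have to do real work, and it is the reason for the square-root exponent. Fix $y,y'\in\Delta_k$ and let $\varepsilon>0$. Pick $\tau$ admissible for $v_k(x,y)$ which is $\varepsilon$-optimal. Shifting to the zero time origin and invoking Proposition~\ref{Proposition:ApproximateStoppingTimeConstruction} (applied to the distributions with atoms in $\{t_k,\dots,t_r\}$, which is a harmless relabelling), we obtain a stopping time $\tau'\in\mathcal{T}_{t_{k-1}}$ with $\tau'\sim\sum_\ell y'_\ell\delta_{t_\ell}$ satisfying
\[
\mathbb{P}[\tau\neq\tau']\le 4^{r}\|y-y'\|_{\ell^1}.
\]
Then $\tau'$ is admissible for $v_k(x,y')$, and since $\tau,\tau'\in\{t_k,\dots,t_r\}$ the difference $|X^{t_{k-1},x}_\tau-X^{t_{k-1},x}_{\tau'}|$ is bounded in $L^{2}(\mathbb{P})$ by $2\bigl(\mathbb{E}\sup_{u\in[t_{k-1},t_r]}|W_u-W_{t_{k-1}}|^{2}\bigr)^{1/2}\le 4\sqrt{t_r-t_{k-1}}$. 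Applying the Lipschitz estimate for $f$ on $\{\tau\neq\tau'\}$ and the Cauchy--Schwarz inequality gives
\[
\bigl|\mathbb{E}[f(X^{t_{k-1},x}_\tau)]-\mathbb{E}[f(X^{t_{k-1},x}_{\tau'})]\bigr|\le L\,\mathbb{E}\bigl[|X^{t_{k-1},x}_\tau-X^{t_{k-1},x}_{\tau'}|\,\mathbf{1}_{\{\tau\neq\tau'\}}\bigr]\le C'\,\|y-y'\|^{1/2}_{\ell^{1}}.
\]
Thus $v_k(x,y)\le v_k(x,y')+C'\|y-y'\|^{1/2}+\varepsilon$. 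Swapping the roles of $y$ and $y'$ and letting $\varepsilon\downarrow 0$ yields $|v_k(x,y)-v_k(x,y')|\le C'\|y-y'\|^{1/2}$.

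\medskip
The full statement follows by the triangle inequality $|v_k(x,y)-v_k(x',y')|\le|v_k(x,y)-v_k(x',y)|+|v_k(x',y)-v_k(x',y')|$ after absorbing constants. The main conceptual obstacle is the $y$-direction, and the square-root exponent is essentially dictated by the fact that the $L^{1}$-distance between distributions can only control the probability that the two coupled stopping times disagree, while the payoff difference on that event has size of order one; Cauchy--Schwarz then costs a square root. Getting sharper regularity in $y$ would require a more delicate coupling than Proposition~\ref{Proposition:ApproximateStoppingTimeConstruction}.
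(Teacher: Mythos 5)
Your proposal is correct and follows essentially the same route as the paper: the growth bound and Lipschitz-in-$x$ estimate come directly from the Lipschitz continuity of $f$, and the H\"older-$1/2$ estimate in $y$ is obtained precisely as in the paper by invoking Proposition~\ref{Proposition:ApproximateStoppingTimeConstruction} to couple admissible stopping times with $\mathbb{P}[\tau\neq\tau']\le 4^{r}\|y-y'\|_{\ell^1}$ and then applying Cauchy--Schwarz, which is what produces the square-root exponent. Your remark about shifting to time origin $t_{k-1}$ before applying the coupling lemma (so that $\tau'$ lands back in $\mathcal{T}_{t_{k-1}}$) is a point the paper leaves implicit, and your use of an $\varepsilon$-optimal $\tau$ rather than an arbitrary one is an immaterial presentational variation.
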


We do not specify the choice of norm in $y$ because it only affects the choice of constant.

\begin{proof}
Recall that $f$ is assumed to be Lipschitz-continuous. Let $\tau\in\mathcal{T}_{t_{k-1}}$ be an arbitrary stopping time such that $\tau\sim\sum_{k=1}^r y_k\delta_{t_k}$ (such a stopping time exists by Corollary \ref{Cor:ExistenceConstruction}). Then, we have
\begin{eqnarray}
\left|\mathbb{E}\left[f(X^{t_{k-1},x}_\tau)\right]\right| & \leq & \mathbb{E}\left[\left|f(X^{t_{k-1},x}_\tau)\right|\right]\nonumber\\
& \leq & \left|f(0)\right| + L\left(|x| + \mathbb{E}\left[\left|W_\tau-W_{t_{k-1}}\right|\right]\right)\nonumber\\
& \leq & \left|f(0)\right| + L\left(|x| + 2\mathbb{E}\left[\left|W_{t_r}\right|\right]\right)\nonumber\\
& \leq & \left|f(0)\right| + L\left(|x| + 2\sqrt{\frac{2}{\pi}t_r}\right).\nonumber
\end{eqnarray}
Next, we compute
\begin{eqnarray}
v_k(x',y) & \geq & \mathbb{E}\left[f(X^{t_{k-1},x'}_\tau)\right] \nonumber\\
& \geq & \mathbb{E}\left[f(X^{t_{k-1},x}_\tau)\right] - L\left|x-x'\right|.\nonumber
\end{eqnarray}
Lastly, by applying Proposition~\ref{Proposition:ApproximateStoppingTimeConstruction}, we can construct a stopping time $\tau'\in\mathcal{T}_{t_{k-1}}$ such that $\tau' \sim \sum_{k=1}^r y_k \delta_{t_k}$ and
\[\mathbb{P}\left[\tau\neq\tau'\right] \leq 4^r \|y-y'\|.\]
Then we can compute
\begin{eqnarray}
v_k(x,y') & \geq & \mathbb{E}\left[f(X^{t_{k-1},x}_{\tau'})\right] \nonumber\\
& \geq & \mathbb{E}\left[f(X^{t_{k-1},x}_\tau)\right] - C\mathbb{E}\left[1_{\tau\neq\tau'}\left|X^{t_{k-1},x}_{\tau'}-X^{t_{k-1},x}_\tau\right|\right] \nonumber\\
& \geq &  \mathbb{E}\left[f(X^{t_{k-1},x}_\tau)\right] - 2C\mathbb{E}\left[1_{\tau\neq\tau'}\left|W_{t_r}\right|\right]\nonumber\\
& \geq & \mathbb{E}\left[f(X^{t_{k-1},x}_\tau)\right] - 2C\,2^{r}\,t_r^{1/2} \|y-y'\|_{\ell^1}^{1/2}.\nonumber
\end{eqnarray}
Then the stated results hold because $(x,y)\in\mathbb{R}\times\Delta_k$ and $\tau$ were both arbitrary.
\end{proof}

In the remainder of the paper, it will prove useful to consider a type of perspective map on the sets $\Delta_k$. For each $k\in\{1,\ldots,r\}$, define $P_k : \Delta_k \to \Delta_k$ as
\begin{equation}\label{Equation:PerspectiveMap}
P_k(y_1,\ldots,y_r) := \left\{\begin{array}{cl}
(y_1,\ldots,y_r) & \text{ if }y_k = 1 \\
(y_{k+1}+\cdots+y_r)^{-1}(0,\ldots,0,y_{k+1},\ldots,y_r) & \text{ if }y_k < 1.
\end{array}\right.
\end{equation}
We note three key properties of this map.
\begin{enumerate}
\item For any $y\in\Delta_k\setminus\{e_k\}$, we have $P_k(y) \in \Delta_{k+1}$,
\item For any $y\in\Delta_k$, the $k$th coordinate of $P_k(y)$ is either zero or one, and
\item The map $P_k$ is continuous on $\Delta_k\setminus\{e_k\}$.
\end{enumerate}

We now provide a dynamic programming lemma whose proof has the same flavour of the weak dynamic programming results in \cite{BouchardTouzi2011,BouchardNutz2012,BayraktarYao2013}. Compared to these previous results, we have a priori continuity of the value functions on the right-hand-side, so we do not need to consider upper- and lower-semicontinuous envelopes. We extend the ideas of a countable covering of the state-space by balls, each associated with a nearly optimal stopping time. To deal with the state-constraints, we employ an argument that utilizes the compactness and convexity of $\Delta_k$ along with the continuity of $v_{k+1}$. The proof of this lemma is the heart of the paper, but is quite involved, so it is relegated to the appendix.

\begin{lem}[Dynamic Programming]\label{Lem:WeakDPPLemma}
For every $k\in\{1,\ldots,r-1\}$ and every $(x,y)\in\mathbb{R}\times\Delta_k$, we have
\begin{equation}\label{Eqn:TerminallyConstrainedControlRepresentation}
\begin{array}{rccl}
v_k(x,y) & = & \sup\limits_{\alpha\in\mathcal{A}_{t_{k-1}}} & \mathbb{E}\left[Y^{(k),t_{k-1},y,\alpha}_{t_k} f(X^{t_{k-1},x}_{t_k})+(1-Y^{(k),t_{k-1},y,\alpha}_{t_k} )v_{k+1}\left(X^{t_{k-1},,x}_{t_k},Y^{t_{k-1},y,\alpha}_{t_k}\right)\right] \\
& & \text{s.t.} & Y^{t_{k-1},y,\alpha}_u \in \Delta_k\text{ for all }u\geq t_{k-1} \\
& & & Y^{(k),t_{k-1},y,\alpha}_{t_k} \in \{0,1\}\text{, almost-surely.}
\end{array}\end{equation}
\end{lem}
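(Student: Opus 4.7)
The plan is to prove the two inequalities separately; write $\widetilde v_k(x,y)$ for the right-hand side of \eqref{Eqn:TerminallyConstrainedControlRepresentation}.

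For the upper bound $v_k \leq \widetilde v_k$, I would take any admissible $\tau \in \mathcal{T}_{t_{k-1}}$ with $\tau \sim \sum_\ell y_\ell \delta_{t_\ell}$ and mimic Step~2 of the proof of Lemma~\ref{Lem:ControlCharacterization}: define the conditional probability martingale $\overline Y^{(\ell)}_t := \mathbb{E}[1_{\{\tau=t_\ell\}} \mid \mathcal{F}_t]$, and invoke the Martingale Representation Theorem to realise $\overline Y = Y^{t_{k-1},y,\alpha}$ for some $\alpha \in \mathcal{A}_{t_{k-1}}$. Since $\tau \geq t_k$ almost surely, $Y_u \in \Delta_k$ for all $u \geq t_{k-1}$, and $Y^{(k)}_{t_k} = 1_{\{\tau=t_k\}} \in \{0,1\}$. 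Decomposing $\mathbb{E}[f(X_\tau)]$ over $\{\tau=t_k\}$ and $\{\tau > t_k\}$, a regular conditional probability argument identifies the conditional law of $\tau$ given $\mathcal{F}_{t_k}$ on the latter event as $\sum_{\ell > k} Y^{(\ell)}_{t_k} \delta_{t_\ell}$, which lies in $\Delta_{k+1}$. The definition of $v_{k+1}$ then bounds this conditional payoff by $v_{k+1}(X_{t_k},Y_{t_k})$, yielding $\mathbb{E}[f(X_\tau)] \leq \widetilde v_k(x,y)$.

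For the reverse inequality I would fix $\varepsilon > 0$ and an admissible $\alpha$. Set $\tau = t_k$ on $\{Y^{(k)}_{t_k}=1\}$; on $\{Y^{(k)}_{t_k}=0\}$ the random pair $(X_{t_k},Y_{t_k})$ lies in $\mathbb{R}\times\Delta_{k+1}$, and one must splice in a near-optimal stopping time for the $(k+1)$-th problem starting from this random state. To bypass a measurable selection, I would adopt the countable-covering strategy of \cite{BouchardTouzi2011,BouchardNutz2012,BayraktarYao2013}: using the continuity of $v_{k+1}$ from Proposition~\ref{Proposition:LipschitzEstimate} together with the compactness and convexity of $\Delta_{k+1}$, cover $\mathbb{R}\times\Delta_{k+1}$ by countably many open balls $B_i$ of radius $\delta(\varepsilon)$ on which $v_{k+1}$ oscillates by less than $\varepsilon$. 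For each $i$ pick a centre $(x_i,y_i) \in B_i \cap (\mathbb{R}\times\Delta_{k+1})$ and an $\varepsilon$-optimal $\tau_i \in \mathcal{T}_{t_k}$ realising $v_{k+1}(x_i,y_i)$, then paste along a Borel partition $\{D_i\}$ subordinate to $\{B_i\}$; the independence $\tau_i \in \mathcal{T}_{t_k}$ is crucial for the spliced object to remain an $\mathbb{F}$-stopping time.

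The principal obstacle is that each $\tau_i$ matches the \emph{deterministic} distribution $\sum_\ell y_i^{(\ell)}\delta_{t_\ell}$, not the \emph{random} conditional distribution $Y_{t_k}(\omega)$ that the assembled $\tau$ must exhibit to be admissible. Here Proposition~\ref{Proposition:ApproximateStoppingTimeConstruction} is the essential tool: for each $\omega \in D_i$ it perturbs $\tau_i$ into a stopping time $\tau_i'$ of exact conditional distribution $Y_{t_k}(\omega)$ with $\mathbb{P}[\tau_i \neq \tau_i' \mid \mathcal{F}_{t_k}] \leq 4^r \|Y_{t_k}-y_i\|_{\ell^1} = O(\delta)$, and this perturbation, combined with the Lipschitz continuity of $f$ and moment bounds on $X$, costs only $O(\delta)$ in the expected payoff. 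The convexity of $\Delta_{k+1}$ is what lets the small balls $B_i$ be chosen so that each centre $y_i$ and each nearby point $Y_{t_k}(\omega)$ are genuine probability vectors, preserving the state constraint throughout the pasting even near the boundary of the simplex. Gluing the $\tau_i'$ along $\{D_i\}$ with $\tau = t_k$ on $\{Y^{(k)}_{t_k}=1\}$ gives a global $\tau \in \mathcal{T}_{t_{k-1}}$ of distribution $\sum_\ell y_\ell \delta_{t_\ell}$; sending $\delta,\varepsilon \to 0$ establishes $v_k(x,y) \geq \widetilde v_k(x,y)$.
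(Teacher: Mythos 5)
Your upper bound $v_k\leq\widetilde v_k$ is exactly Step 6 of the paper's proof — form the conditional-probability martingale $\overline Y$, apply the Martingale Representation Theorem, condition on $\mathcal{F}_{t_k}$, and invoke the definition of $v_{k+1}$ — so that direction is fine.

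For the lower bound, your route is genuinely different from the paper's, and there is a gap at the crucial step. The paper never perturbs the stopping times $\tau_{ij}$; it perturbs the controlled process $Y$. Concretely, using a Brownian bridge on a short interval $[t_k-h,t_k]$ and the Martingale Representation Theorem, the paper constructs a nearby admissible $\alpha_\epsilon$ for which $Y^{t_{k-1},y,\alpha_\epsilon}_{t_k}$ lands almost surely on a \emph{finite} mesh $\mathcal{P}\subset\Delta_{k+1}$ (when $Y^{(k)}_{t_k}=0$), at small cost in expected payoff. Once the terminal value is mesh-valued, the gluing $\tau_\epsilon = t_k + 1_{\{Y^{(k),\alpha_\epsilon}_{t_k}=0\}}\sum_{i,j}\tau_{ij}1_{\{X_{t_k}\in A_i\}}1_{\{Y^{\alpha_\epsilon}_{t_k}=y_j\}}$ is over a countable $\mathcal{F}_{t_k}$-measurable partition, the conditional laws of $\tau_{ij}$ are the \emph{deterministic} vectors $y_j$, and a direct computation shows $\tau_\epsilon\sim\sum_\ell y_\ell\delta_{t_\ell}$ with no selection argument at all. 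Discretizing $Y_{t_k}$ is precisely how the paper sidesteps what you hit next.

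The step you gloss over is applying Proposition~\ref{Proposition:ApproximateStoppingTimeConstruction} ``for each $\omega\in D_i$'' with the \emph{random} target distribution $Y_{t_k}(\omega)$. As stated, that proposition is an unconditional result for two deterministic probability vectors $p,p'$. What you actually need is a conditional analogue: given an $\mathcal{F}_{t_k}$-measurable random target $p'(\omega)=Y_{t_k}(\omega)\in\Delta_{k+1}$ and $\tau_i\in\mathcal{T}_{t_k}$, produce a single $\mathbb{F}$-stopping time $\tau_i'$ with conditional law $\mathbb{P}[\tau_i'=t_\ell\mid\mathcal{F}_{t_k}]=Y^{(\ell)}_{t_k}$ and with $\mathbb{P}[\tau_i\neq\tau_i'\mid\mathcal{F}_{t_k}]\leq 4^{r}\|Y_{t_k}-y_i\|_{\ell^1}$. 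This requires reworking the proof of that proposition in the shifted filtration with $\mathcal{F}_{t_k}$-measurable random thresholds $w^\pm_\ell(\omega)$, and verifying that the recursively defined sets $A'_\ell$ remain $\mathcal{F}_{t_\ell}$-measurable so that the assembled object is a \emph{single} $\mathbb{F}$-stopping time, not a family indexed by $\omega$. That verification is exactly the measurable-selection difficulty you claim to have bypassed; it is plausible that the explicitness of the construction makes it rigorous, but it is not a corollary of the proposition as stated, and it is the heart of the matter. A secondary point: the cost of the perturbation is $O(\delta^{1/2})$, not $O(\delta)$, since the passage from $\mathbb{P}[\tau_i\neq\tau_i']=O(\delta)$ to a payoff bound uses Cauchy--Schwarz together with the second-moment bound on $X$.
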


\begin{proof}

See Appendix \ref{Appendix:WeakDPPLemma}.

\end{proof}

Next, we demonstrate that we may relax the terminal constraint. The proof of this idea relies on a careful construction of a perturbed martingale which satisfies the terminal constraints of the previous problem, but does not significantly change the expected payoff. The proof of this result shares many key ideas with the previous lemma. For the sake of exposition, we provide this proof in the appendix as well.

\begin{lem}[Constraint Relaxation]\label{Lem:TerminalRelaxation}
For every $k\in\{1,\ldots,r-1\}$ and every $(x,y)\in\mathbb{R}\times\Delta_k$, we have
\begin{equation}\label{Eqn:StateConstrainedControlRepresentation}
\begin{array}{rccl}
v_k(x,y) & = & \sup\limits_{\alpha\in\mathcal{A}_{t_{k-1}}} & \mathbb{E}\left[Y^{(k),t_{k-1},y,\alpha}_{t_k} f(X^{t_{k-1},x}_{t_k})+(1-Y^{(k),t_{k-1},y,\alpha}_{t_k} )v_{k+1}\left(X^{t_{k-1},x}_{t_k},P_k(Y^{t_{k-1},y,\alpha}_{t_k})\right)\right] \\
& & \text{s.t.} & Y^{t_{k-1},y,\alpha}_u \in \Delta_k\text{ for all }u\geq t_{k-1}\text{, almost-surely,}
\end{array}\end{equation}
where $P_k : \Delta_k \to \Delta_k$ is the perspective map defined in \eqref{Equation:PerspectiveMap}.
\end{lem}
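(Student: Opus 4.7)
The plan is to establish both inequalities between $v_k(x,y)$ and the right-hand side of \eqref{Eqn:StateConstrainedControlRepresentation}, using the representation from Lemma~\ref{Lem:WeakDPPLemma} as the starting point. Write $h(z,x) := z_k f(x) + (1-z_k)\,v_{k+1}(x, P_k(z))$; Proposition~\ref{Proposition:LipschitzEstimate} together with the fact that $(1-z_k)\to 0$ as $z\to e_k$ yields continuity of $h$ on $\Delta_k\times\mathbb{R}$ and linear growth $|h(z,x)|\le C(1+|x|)$. Moreover, $P_k$ acts as the identity on $\{e_k\}\cup\Delta_{k+1}$, so in particular $P_k\circ P_k = P_k$. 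The easy inequality $v_k(x,y)\le\text{RHS}$ is then immediate: any $\alpha$ feasible in Lemma~\ref{Lem:WeakDPPLemma} has $Y^{(k),t_{k-1},y,\alpha}_{t_k}\in\{0,1\}$ almost surely, hence $Y^{t_{k-1},y,\alpha}_{t_k}\in\{e_k\}\cup\Delta_{k+1}$, so $P_k(Y_{t_k})=Y_{t_k}$, the two integrands coincide, and $\alpha$ is also feasible in \eqref{Eqn:StateConstrainedControlRepresentation}.

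For the reverse inequality, fix $\alpha\in\mathcal{A}_{t_{k-1}}$ satisfying only the state constraint, write $Y:=Y^{t_{k-1},y,\alpha}$, and for small $\epsilon>0$ construct a perturbed control $\tilde\alpha^\epsilon\in\mathcal{A}_{t_{k-1}}$ that agrees with $\alpha$ on $[t_{k-1},t_k-\epsilon]$ (so $Z:=Y^{\tilde\alpha^\epsilon}_{t_k-\epsilon}=Y_{t_k-\epsilon}$) and on $[t_k-\epsilon,t_k]$ drives $Y^{\tilde\alpha^\epsilon}$ along the line segment between $e_k$ and $P_k(Z)$, both of which lie in $\Delta_k$. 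Explicitly, set
\[
Y^{\tilde\alpha^\epsilon}_s := \lambda_s\,e_k + (1-\lambda_s)\,P_k(Z),\qquad
\lambda_s := 1 - \Phi\!\left(\frac{c(Z)-(W_s-W_{t_k-\epsilon})}{\sqrt{t_k-s}}\right),
\]
with $c(Z):=\sqrt{\epsilon}\,\Phi^{-1}(1-Z_k)$. Then $\lambda_s = \mathbb{E}[\mathbf{1}\{W_{t_k}-W_{t_k-\epsilon}>c(Z)\}\mid\mathcal{F}_s]$ is an $\mathbb{F}$-martingale in $[0,1]$ with $\lambda_{t_k-\epsilon}=Z_k$ and $\lambda_{t_k}\in\{0,1\}$ almost surely, so $Y^{\tilde\alpha^\epsilon}$ remains in the segment (hence in $\Delta_k$) and satisfies $Y^{(k),\tilde\alpha^\epsilon}_{t_k}\in\{0,1\}$. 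The associated diffusion coefficient $\tilde\alpha^\epsilon$ is square-integrable by boundedness of $\lambda$ and the It\^o isometry, so $\tilde\alpha^\epsilon$ is feasible in Lemma~\ref{Lem:WeakDPPLemma}, and since $P_k(Y^{\tilde\alpha^\epsilon}_{t_k})=Y^{\tilde\alpha^\epsilon}_{t_k}$ this gives
\[
v_k(x,y)\ \ge\ \mathbb{E}\!\left[h(Y^{\tilde\alpha^\epsilon}_{t_k},X^{t_{k-1},x}_{t_k})\right].
\]

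It remains to pass $\epsilon\to 0$. Conditional on $\mathcal{F}_{t_k-\epsilon}$, $Y^{\tilde\alpha^\epsilon}_{t_k}$ takes values $e_k$ and $P_k(Z)$ with probabilities $Z_k$ and $1-Z_k$, so replacing $X^{t_{k-1},x}_{t_k}$ by $X^{t_{k-1},x}_{t_k-\epsilon}$ via Lipschitz continuity of $f$ and $v_{k+1}$ together with $\mathbb{E}[|W_{t_k}-W_{t_k-\epsilon}|]=O(\sqrt{\epsilon})$ yields
\[
\mathbb{E}\!\left[h(Y^{\tilde\alpha^\epsilon}_{t_k},X^{t_{k-1},x}_{t_k})\mid\mathcal{F}_{t_k-\epsilon}\right] = h(Z,X^{t_{k-1},x}_{t_k-\epsilon}) + O(\sqrt{\epsilon}).
\]
Taking expectations, path continuity of $(Y,X)$ together with dominated convergence (justified by $|h(z,x)|\le C(1+|x|)$ and $\mathbb{E}[\sup_{s\le t_k}|X^{t_{k-1},x}_s|]<\infty$ from Doob's inequality) delivers $\mathbb{E}[h(Y^{\tilde\alpha^\epsilon}_{t_k},X^{t_{k-1},x}_{t_k})]\to\mathbb{E}[h(Y_{t_k},X^{t_{k-1},x}_{t_k})]$, i.e. the objective value of $\alpha$ in \eqref{Eqn:StateConstrainedControlRepresentation}, completing the proof. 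The main obstacle is precisely the construction of an $\mathbb{F}$-adapted perturbation that simultaneously respects the state constraint, enforces $Y^{(k)}_{t_k}\in\{0,1\}$, and produces the correct two-point conditional terminal law on $\{e_k,P_k(Z)\}$ using only the driving Brownian motion $W$; restricting the motion on $[t_k-\epsilon,t_k]$ to the one-dimensional segment through $Z$ between $e_k$ and $P_k(Z)$ and parameterizing it by the explicit normal-CDF martingale $\lambda$ is the key trick, and the unavoidable small-$\epsilon$ correlation between $\lambda_{t_k}$ and $X^{t_{k-1},x}_{t_k}$ costs only an $O(\sqrt{\epsilon})$ Lipschitz error.
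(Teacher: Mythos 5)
Your proposal is correct and follows the same overall strategy as the paper's proof: establish the trivial inequality using $P_k\vert_{\{e_k\}\cup\Delta_{k+1}} = \mathrm{id}$, then for the reverse inequality modify an admissible $\alpha$ on $[t_k-\epsilon,t_k]$ so that $Y_{t_k}$ becomes a two-point random variable on $\{e_k, P_k(Y_{t_k-\epsilon})\}$ with the correct conditional law, and finally pass $\epsilon\to 0$ using continuity and growth bounds. The one genuine divergence is the choice of randomization device. You use the Brownian increment $W_{t_k}-W_{t_k-\epsilon}$ and the normal CDF martingale $\lambda_s$, which is the construction the paper explicitly characterizes as the one "one might initially attempt" (in the spirit of Corollary~\ref{Cor:ExistenceConstruction}). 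The paper instead uses the scaled maximum $M$ of a Brownian bridge over $[t_k-h,t_k]$ (in the spirit of Corollary~\ref{Cor:ExistenceConstruction2}), precisely because $M$ is independent of $\mathcal{G}=\sigma(\mathcal{F}_{t_k-h}\cup\sigma(W_{t_k}))$; this independence makes the conditional identity $\mathbb{E}[1_{\{\overline{Y}^{(k)}_{t_k}=1\}}f(X^x_{t_k})]=\mathbb{E}[Y^{(k),y,\alpha}_{t_k-h}f(X^x_{t_k})]$ \emph{exact}, with no error from the interaction between the coin flip and the payoff. Because your $\lambda_{t_k}=\mathbf{1}\{W_{t_k}-W_{t_k-\epsilon}>c(Z)\}$ is correlated with $X^x_{t_k}$, you cannot obtain that exact identity; you instead absorb the correlation into the error term by replacing $X^x_{t_k}$ with $X^x_{t_k-\epsilon}$ and invoking Lipschitz continuity of $f$ and $v_{k+1}$, which costs $O(\sqrt{\epsilon})$. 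Since both approaches must pass $\epsilon\to 0$ anyway, your extra error term is harmless, so your route is sound; the paper's Brownian-bridge construction is slightly cleaner in that it decouples the randomization error from the time-truncation error, but yours shows the "naive" increment version also goes through with one extra Lipschitz estimate.
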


Note, even though $P_k(e_k)\not\in\Delta_{k+1}$, the right-hand-side of \eqref{Eqn:StateConstrainedControlRepresentation} is well-defined because $v_{k+1}$ is known to be bounded and continuous. Then, there is a unique continuous extension of the map $(x,y) \mapsto (1-y_k)v_{k+1}(x,y)$ from $\Delta_k\setminus\{e_k\}$ to $\Delta_k$. That is, taking the right-hand-side to be zero when $y=e_k$.

\begin{proof}

See Appendix \ref{Appendix:ProofTerminalRelaxation}.

\end{proof}

%

%
%

With these lemmas in hand, we can now state the main result of this paper.

\begin{thm}\label{Thm:MainResult}
The function $v_r : \mathbb{R}\times\Delta_r \to \mathbb{R}$ satisfies
\begin{equation}\nonumber
v_r(x,y) = \mathbb{E}\left[f(X^{t_{r-1},x}_{t_r})\right]
\end{equation}
for every $(x,y)\in\mathbb{R}\times\Delta_r$.

For each $k\in\{1,\ldots,r-1\}$, the function $v_k : \mathbb{R}\times\Delta_k \to \mathbb{R}$ is the value function of the following state-constrained stochastic control problem
\begin{equation}\nonumber\begin{array}{rccl}
v_k(x,y) & = & \sup\limits_{\alpha\in\mathcal{A}_{t_{k-1}}} & \mathbb{E}\left[Y^{(k),t_{k-1},y,\alpha}_{t_k} f(X^{t_{k-1},x}_{t_k})+(1-Y^{(k),t_{k-1},y,\alpha}_{t_k} )v_{k+1}\left(X^{t_{k-1},x}_{t_k},P_k(Y^{t_{k-1},y,\alpha}_{t_k})\right)\right] \\
& & \text{s.t.} & Y^{t_{k-1},y,\alpha}_u \in \Delta_k\text{ for all }u\geq t_{k-1}\text{, almost-surely,}
\end{array}\end{equation}
where $P_k : \Delta_k \to \Delta_k$ is defined as in \eqref{Equation:PerspectiveMap}.

Of course, we then have
\begin{equation}\nonumber
v^\star = v_1(x_0,p_1,\ldots,p_r).
\end{equation}
\end{thm}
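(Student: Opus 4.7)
The plan is to assemble the theorem from three ingredients already in hand: a direct evaluation of $v_r$, the recursive characterization of $v_k$ in terms of $v_{k+1}$ supplied by Lemma~\ref{Lem:TerminalRelaxation}, and a definitional identification of $v_1(x_0,p)$ with $v^\star$. No further stochastic analysis is required; what remains is to verify the base case cleanly and to check that each step fits the hypotheses of the preceding lemmas.

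First I would dispose of the base case $k=r$. The set $\Delta_r$ consists of those $y\in\Delta$ with $y_1=\cdots=y_{r-1}=0$; since the coordinates of $y$ must sum to one, this forces $y_r=1$, hence $\Delta_r=\{e_r\}$. For any $\tau\in\mathcal{T}_{t_{r-1}}$ satisfying $\tau\sim\delta_{t_r}$ we must have $\tau=t_r$ almost surely, so the supremum in \eqref{Eqn:IteratedStoppingRepresentation} collapses to the single value $\mathbb{E}[f(X^{t_{r-1},x}_{t_r})]$, as claimed.

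Next, for each $k\in\{1,\ldots,r-1\}$ the representation of $v_k$ as a state-constrained control problem is exactly the content of Lemma~\ref{Lem:TerminalRelaxation}; only its applicability needs to be confirmed. Proposition~\ref{Proposition:LipschitzEstimate} gives that $v_{k+1}$ is bounded and Lipschitz on $\mathbb{R}\times\Delta_{k+1}$, and the perspective map $P_k$ sends $\Delta_k\setminus\{e_k\}$ into $\Delta_{k+1}$. On the remaining point $y=e_k$ the weight $1-Y^{(k),t_{k-1},y,\alpha}_{t_k}$ vanishes, so the integrand $(1-Y^{(k)}_{t_k})\,v_{k+1}(X_{t_k},P_k(Y_{t_k}))$ admits the continuous extension by zero noted directly after Lemma~\ref{Lem:TerminalRelaxation}. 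This guarantees that the expectation in \eqref{Eqn:StateConstrainedControlRepresentation} is unambiguous throughout $\Delta_k$.

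Finally, the identity $v^\star=v_1(x_0,p_1,\ldots,p_r)$ reduces to matching definitions. Since $t_0=0$ and every element of $\mathcal{T}$ is by construction independent of $\mathcal{F}_0$, we have $\mathcal{T}_{t_0}=\mathcal{T}$; substituting $k=1$ and $y=p$ into~\eqref{Eqn:IteratedStoppingRepresentation} reproduces exactly the problem~\eqref{Equation:MainProblem}. The only subtle point in the whole argument is the continuous extension at the boundary point $e_k$, so the main obstacle is merely to ensure that the passage between~\eqref{Eqn:TerminallyConstrainedControlRepresentation} and~\eqref{Eqn:StateConstrainedControlRepresentation} through the perspective map introduces no measurability or boundary pathology; this is in fact already handled once Lemma~\ref{Lem:TerminalRelaxation} is invoked, so the present theorem is essentially a packaging of the previous three results.
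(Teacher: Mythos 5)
Your proof is correct and follows essentially the same route as the paper: identify $\Delta_r=\{e_r\}$ so that the base case is forced, invoke Lemma~\ref{Lem:TerminalRelaxation} (which itself uses Lemma~\ref{Lem:WeakDPPLemma}) for the recursive step, and read off $v^\star = v_1(x_0,p)$ from the definitions. The extra care you take with the continuous extension at $e_k$ is a helpful clarification but is the same packaging argument the paper gives.
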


\begin{proof}

It is clear that $v_r$ has the representation above because there is only one admissible stopping time. 
The rest follows immediately from applying Lemma~\ref{Lem:WeakDPPLemma} and Lemma~\ref{Lem:TerminalRelaxation}.

\end{proof}

\subsection{Time-dependent value functions}\label{Subsection:TimeDependentValueFunctions}

While, for the purposes of this paper, we may consider the results of Theorem~\ref{Thm:MainResult} as a solution to the distribution-constrained optimal stopping problem, we can consider an additional time-dependent version of the state-constrained problem which is amenable to numerical resolution. In particular, the time-dependent value functions will correspond to viscosity solutions of Hamilton-Jacobi-Bellman (HJB) equations.

\begin{defn}\label{Definition:TimeDependentValue}
Define a function $w_r : [t_{r-1},t_r]\times\mathbb{R}\times\Delta_r\to\mathbb{R}$ as
\begin{equation}\nonumber
w_r(t,x,y) := \mathbb{E}\left[f(X^{t,x}_{t_r})\right].
\end{equation}
For each $k\in\{1,\ldots,r-1\}$, define a function $w_k : [t_{k-1},t_k]\times\mathbb{R}\times\Delta_k\to\mathbb{R}$ as
\begin{equation}\nonumber\begin{array}{rccl}
w_k(t,x,y) & := & \sup\limits_{\alpha\in\mathcal{A}} & \mathbb{E}\left[Y^{(k),t,y,\alpha}_{t_k} f(X^{t,x}_{t_k})+(1-Y^{(k),t,y,\alpha}_{t_k})v_{k+1}\left(X^{t,x}_{t_k},P_k(Y^{t,y,\alpha}_{t_k})\right)\right] \\
& & \text{s.t.} & Y^{t,y,\alpha}_u \in \Delta_k\text{ for all }u\geq t\text{, almost-surely,}
\end{array}\end{equation}
where $P_k : \Delta_k \to \Delta_k$ is defined as in \eqref{Equation:PerspectiveMap}.
\end{defn}

\begin{remark}\label{Remark:EquivalentAdmissibleSet}
Using the properties of the time-independent auxiliary value functions $v_k$ deduced in Section~\ref{Section:IteratedControl}, we can consider the terminal payoff as a given H\"older continuous function. Then this is a true state-constrained optimal stochastic control problem. In particular, we note that by the argument of Remark~5.2 in \cite{BouchardTouzi2011} we can equivalently define $w_k$ as a supremum over controls in $\mathcal{A}$ which are not necessarily independent of $\mathcal{F}_t$.
\end{remark}

We note an immediate relationship with the value functions of Section \ref{Section:IteratedControl}.

\begin{cor}\label{Corollary:TimeDependentRelationship}
For each $k\in\{1,\ldots,r\}$ we have
\[v_k\left(x,y\right) = w_k(t_{k-1},x,y)\]
for all $(x,y)\in\mathbb{R}\times\Delta_k$.
\end{cor}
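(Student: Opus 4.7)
The plan is to verify Corollary~\ref{Corollary:TimeDependentRelationship} by direct comparison of the expressions defining the two families of value functions, using Theorem~\ref{Thm:MainResult} to identify $v_k$ with a state-constrained control problem rather than working with its original distribution-constrained optimal stopping formulation.

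\textbf{Case $k=r$.} From Definition~\ref{Definition:TimeDependentValue}, $w_r(t_{r-1},x,y)=\mathbb{E}[f(X^{t_{r-1},x}_{t_r})]$, while the first assertion of Theorem~\ref{Thm:MainResult} states $v_r(x,y)=\mathbb{E}[f(X^{t_{r-1},x}_{t_r})]$. The identity is therefore immediate.

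\textbf{Case $k\in\{1,\ldots,r-1\}$.} Evaluating Definition~\ref{Definition:TimeDependentValue} at $t=t_{k-1}$, the objective functional $\mathbb{E}[Y^{(k),t_{k-1},y,\alpha}_{t_k} f(X^{t_{k-1},x}_{t_k}) + (1-Y^{(k),t_{k-1},y,\alpha}_{t_k})v_{k+1}(X^{t_{k-1},x}_{t_k},P_k(Y^{t_{k-1},y,\alpha}_{t_k}))]$ and the state constraint $Y^{t_{k-1},y,\alpha}_u\in\Delta_k$ appearing in the optimization for $w_k(t_{k-1},x,y)$ coincide exactly with those in the representation of $v_k(x,y)$ supplied by Theorem~\ref{Thm:MainResult}. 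The only discrepancy is the admissible class of controls: $w_k$ takes its supremum over all of $\mathcal{A}$, whereas $v_k$ takes its supremum over the subcollection $\mathcal{A}_{t_{k-1}}$ of controls independent of $\mathcal{F}_{t_{k-1}}$. Since $\mathcal{A}_{t_{k-1}}\subset\mathcal{A}$, the inequality $v_k(x,y)\leq w_k(t_{k-1},x,y)$ is automatic, and the reverse inequality is precisely the content of Remark~\ref{Remark:EquivalentAdmissibleSet}, which, appealing to the argument of Remark~5.2 in \cite{BouchardTouzi2011}, establishes that enlarging from $\mathcal{A}_t$ to $\mathcal{A}$ does not change the value of this state-constrained optimal control problem once the terminal payoff is known to be H\"older continuous (a property guaranteed by Proposition~\ref{Proposition:LipschitzEstimate}).

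There is no substantive obstacle here: the corollary is essentially an unpacking of definitions once one replaces, via Theorem~\ref{Thm:MainResult}, the distribution-constrained optimal stopping representation of $v_k$ by its equivalent state-constrained control representation. The only mildly delicate point is the mismatch of control classes, which is resolved entirely by Remark~\ref{Remark:EquivalentAdmissibleSet}; all other identifications are term-by-term.
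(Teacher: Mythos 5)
Your proof is correct and follows the same route the paper takes: the paper simply declares the result ``obvious from the definition of $w_k$ and Theorem~\ref{Thm:MainResult},'' and you have unpacked exactly what that means, including the genuinely relevant point that the control classes $\mathcal{A}$ and $\mathcal{A}_{t_{k-1}}$ differ and that Remark~\ref{Remark:EquivalentAdmissibleSet} closes this gap. Nothing is missing; you have just made explicit what the authors treated as immediate.
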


\begin{proof}
This result is obvious from the definition of $w_k$ and Theorem~\ref{Thm:MainResult}.
\end{proof}

Before stating a Dynamic Programming Principle for the time-dependent value functions, we first investigate their regularity. In particular, we aim to demonstrate that each $w_k$ is concave in $y$ and jointly continuous with estimates on its modulus of continuity. 

\begin{prop}\label{Proposition:Concavity}
For each $k\in\{1,\ldots,r\}$, the function $w_k(t,x,\cdot)$ is concave for each $(t,x)$.
\end{prop}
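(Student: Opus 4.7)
The plan is to proceed by backward induction on $k$, exploiting the relationship $v_{k+1}(x,y) = w_{k+1}(t_k, x, y)$ from Corollary \ref{Corollary:TimeDependentRelationship} so that the inductive hypothesis on $w_{k+1}$ immediately gives concavity of $v_{k+1}(x, \cdot)$ on $\Delta_{k+1}$. The base case $k = r$ is trivial, since $w_r(t, x, y) = \mathbb{E}[f(X^{t, x}_{t_r})]$ does not depend on $y$.

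For the inductive step, the first step is to verify concavity of the integrand
\[\phi(x, y) := y_k f(x) + (1 - y_k)\, v_{k+1}(x, P_k(y)),\]
with the continuous extension $\phi(x, e_k) := f(x)$. The term $y_k f(x)$ is linear in $y$, and on $\{y \in \Delta_k : y_k < 1\}$ the second term can be recognized as the perspective transformation $(s,w) \mapsto s \cdot v_{k+1}(x, w/s)$ applied with $s = 1 - y_k$ and $w = (y_{k+1}, \ldots, y_r)$. Because the perspective construction preserves concavity and the map $y \mapsto (s(y), w(y))$ is affine, concavity of $\phi(x, \cdot)$ follows on $\{y_k < 1\}$. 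A direct computation shows that $\phi(x, \cdot)$ is actually affine along any segment joining $e_k$ to a point of $\Delta_{k+1}$, so concavity extends to all of $\Delta_k$.

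The remainder is a standard convex-combination-of-controls argument. Fix $y^1, y^2 \in \Delta_k$, $\lambda \in [0, 1]$, and set $y = \lambda y^1 + (1 - \lambda) y^2$. For $\varepsilon > 0$, choose admissible controls $\alpha^1, \alpha^2 \in \mathcal{A}$ that are $\varepsilon$-optimal for $w_k(t, x, y^1)$ and $w_k(t, x, y^2)$, respectively. Define $\tilde{\alpha} := \lambda \alpha^1 + (1 - \lambda) \alpha^2 \in \mathcal{A}$; by linearity of the stochastic integral,
\[Y^{t, y, \tilde{\alpha}} = \lambda Y^{t, y^1, \alpha^1} + (1 - \lambda) Y^{t, y^2, \alpha^2},\]
which lies in $\Delta_k$ almost surely by convexity of $\Delta_k$, so $\tilde{\alpha}$ is admissible for $w_k(t, x, y)$. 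Applying concavity of $\phi(x, \cdot)$ pathwise to this identity, then taking expectations and using $\varepsilon$-optimality, yields
\[w_k(t, x, y) \geq \mathbb{E}\bigl[\phi(X^{t, x}_{t_k}, Y^{t, y, \tilde{\alpha}}_{t_k})\bigr] \geq \lambda w_k(t, x, y^1) + (1 - \lambda) w_k(t, x, y^2) - \varepsilon,\]
and letting $\varepsilon \to 0$ closes the induction. The only nontrivial ingredient is the concavity of $\phi$, particularly the boundary analysis at $y = e_k$ where the perspective map is not a priori defined; once that is settled, the rest reduces to convexity of $\Delta_k$ and the linearity of the dynamics $dY = \alpha\, dW$ in both the initial condition $y$ and the control $\alpha$.
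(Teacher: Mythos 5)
Your proof is correct and follows essentially the same route as the paper: backward induction, concavity of the terminal payoff via the perspective transform of $v_{k+1}$, and the convex-combination-of-controls argument using convexity of $\Delta_k$ and linearity of $Y$ in $(y,\alpha)$. The only cosmetic differences are your use of $\varepsilon$-optimal controls (the paper takes arbitrary admissible controls and passes to the supremum afterward) and your somewhat more explicit handling of the boundary point $e_k$, which the paper glosses over.
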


\begin{proof}

We proceed by backwards induction. Notice that the set $\Delta_r$ is a singleton, so the functions $w_r$ and $v_r$ are both trivially concave in $y$.

Suppose that $v_{k+1}$ is concave in $y$ for some $k\in\{1,\ldots,r-1\}$. The key observation is that the map
\begin{eqnarray}
\Delta_k\setminus\{e_k\} \ni y & \mapsto & (1-y_k)v_{k+1}\left(x,P_k(y)\right) \nonumber\\
& = & (y_{k+1}+\cdots+y_r)v_{k+1}\left(x,\frac{(0,\ldots,0,y_{k+1},\ldots,y_r)}{y_{k+1}+\cdots+y_r}\right)\nonumber
\end{eqnarray}
is concave for every $x\in\mathbb{R}$ because it is the perspective transformation of the concave map $\Delta_{k+1} \ni y\mapsto v_{k+1}(x,y)$ (See Section~3.2.6 in \cite{BoydVandenberghe2004}).

With this in mind fix any $(t,x)\in[t_{k-1},t_k]\times\mathbb{R}$, $y_1,y_2\in\Delta_k$, and $\lambda\in[0,1]$. Let $\alpha_1,\alpha_2\in\mathcal{A}$ be arbitrary controls for which
\[Y^{t,y_1,\alpha_1}_u,Y^{t,y_2,\alpha_2}_u \in \Delta_k,\]
almost-surely, for all $u\geq t$. Define $\overline{y} := \lambda y_1 + (1-\lambda)y_2$ and $\overline\alpha_u := \lambda\alpha_{1,u}+(1-\lambda)\alpha_{2,u}$. Then $\overline\alpha \in \mathcal{A}$ and
\[Y^{t,\overline{y},\overline\alpha}_u \in \Delta_k,\]
almost-surely, for all $u\geq t$ by the convexity of the set $\Delta_k$.

Then using the concavity of the perspective map, we can compute
\begin{eqnarray}
w_k(t,x,\overline{y}) & \geq & \mathbb{E}\left[Y^{(k),t,\overline{y},\overline\alpha}_{t_k} f(X^{t,x}_{t_k})+(1-Y^{(k),t,\overline{y},\overline\alpha}_{t_k})v_{k+1}\left(X^{t,x}_{t_k},P_k(Y^{t,\overline{y},\overline\alpha}_{t_k})\right)\right]\nonumber\nonumber\\
& \geq & \mathbb{E}\left[Y^{(k),t,\overline{y},\overline\alpha}_{t_k} f(X^{t,x}_{t_k})+\lambda(1-Y^{(k),t,y_1,\alpha_1}_{t_k})v_{k+1}\left(X^{t,x}_{t_k},P_k(Y^{t,y_1,\alpha_1}_{t_k})\right)\right.\nonumber\\
& & \hspace{1cm}\left. +(1-\lambda)(1-Y^{(k),t,y_2,\alpha_2}_{t_k})v_{k+1}\left(X^{t,x}_{t_k},P_k(Y^{t,y_2,\alpha_2}_{t_k})\right)\right]\nonumber\\
& = & \lambda\,\mathbb{E}\left[Y^{(k),t,y_1,\alpha_1}_{t_k} f(X^{t,x}_{t_k})+(1-Y^{(k),t,y_1,\alpha_1}_{t_k})v_{k+1}\left(X^{t,x}_{t_k},P_k(Y^{t,y_1,\alpha_1}_{t_k})\right)\right]\nonumber\\
& & \hspace{1cm} +(1-\lambda)\,\mathbb{E}\left[Y^{(k),t,y_2,\alpha_2}_{t_k} f(X^{t,x}_{t_k})+(1-Y^{(k),t,y_2,\alpha_2}_{t_k})v_{k+1}\left(X^{t,x}_{t_k},P_k(Y^{t,y_2,\alpha_2}_{t_k})\right)\right].\nonumber
\end{eqnarray}
But because $\alpha_1,\alpha_2$ were arbitrary, we conclude
\[w_k(t,x,\overline{y}) \geq \lambda w_k(t,x,y_1) + (1-\lambda)w_k(t,x,y_2).\]
Then $w_k$ is concave in $y$, and hence so is $v_k$ by Corollary~\ref{Corollary:TimeDependentRelationship}. Then the result holds by induction.

\end{proof}

We can go a step further and obtain a detailed estimate of the joint continuity of $w_k$.

\begin{prop}\label{Proposition:TimeDependentHolderEstimates}
There exists $C>0$, which depend only on $f$ and $\mu$, such that for each $k\in\{1,\ldots,r-1\}$, we have
\[\left|w_k(t,x,y)-w_k(t',x',y')\right| \leq C\left(\left|t-t'\right|^{1/4} + \left|x-x'\right| + (1+\left|x\right|+\left|x'\right|)\|y-y'\|_{\ell^2}^{1/4})\right)\]
for all $(t,x,y),(t',x',y')\in[t_{k-1},t_k]\times\mathbb{R}\times\Delta_k$.
\end{prop}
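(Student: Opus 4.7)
The plan is to bound each of the three differences in the triangle inequality decomposition
\[|w_k(t,x,y)-w_k(t',x',y')| \leq |w_k(t,x,y)-w_k(t,x',y)| + |w_k(t,x',y)-w_k(t,x',y')| + |w_k(t,x',y')-w_k(t',x',y')|\]
separately, using the concavity from Proposition~\ref{Proposition:Concavity} and the regularity of $v_{k+1}$ from Proposition~\ref{Proposition:LipschitzEstimate}.

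The $x$-Lipschitz estimate is immediate: fix $(t,y)$ and note that admissibility of a control $\alpha\in\mathcal{A}$ depends only on $y$, so the same $\alpha$ is admissible for both initial conditions $x$ and $x'$. Since $X^{t,x}_u - X^{t,x'}_u = x-x'$ pathwise, the Lipschitz property of $f$ together with the Lipschitz continuity of $v_{k+1}$ in $x$ from Proposition~\ref{Proposition:LipschitzEstimate} yields $|w_k(t,x,y) - w_k(t,x',y)| \leq L|x-x'|$.

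For the $t$-estimate with $t \leq t'$, the lower bound $w_k(t,x',y') \geq w_k(t',x',y') - O(|t-t'|^{1/2})$ follows by extending a near-optimal control for $(t',x',y')$ by zero on $[t,t']$ so that only the change in $X$ contributes, and the expected correction $\mathbb{E}|W_{t'}-W_t|$ is of order $|t-t'|^{1/2}$ by Lipschitz continuity of the integrand. For the upper bound, I take a near-optimal $\alpha$ for $(t,x',y')$, condition on $\mathcal{F}_{t'}$, and use that the restriction of $\alpha$ to $[t',t_k]$ is admissible for the shifted control problem (invoking Remark~\ref{Remark:EquivalentAdmissibleSet} to waive the independence requirement). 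This gives $\mathbb{E}[\phi(X^{t,x'}_{t_k},Y^{t,y',\alpha}_{t_k})\mid\mathcal{F}_{t'}]\leq w_k(t',X^{t,x'}_{t'},Y^{t,y',\alpha}_{t'})$. Applying the already-established $x$-Lipschitz bound, then exploiting the concavity of $w_k(t',x',\cdot)$ from Proposition~\ref{Proposition:Concavity} together with Jensen's inequality for the martingale $Y^{t,y',\alpha}$ with mean $y'$, produces an $O(|t-t'|^{1/2})$ bound, which is stronger than the claimed $|t-t'|^{1/4}$.

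The $y$-estimate is where I expect the main difficulty. The natural coupling of reusing $\alpha$ with shifted initial condition fails because the shifted martingale $Y^{t,y,\alpha}_u + (y'-y)$ can exit $\Delta_k$ in finite time with non-negligible probability. My plan is to combine concavity with an inward perturbation: for $y,y'\in\Delta_k$ and a small parameter $\epsilon>0$, introduce $y^\epsilon := (1-\epsilon)y+\epsilon y_c$ and $(y')^\epsilon := (1-\epsilon)y'+\epsilon y_c$, where $y_c$ is the centroid of $\Delta_k$; both points sit at distance $\geq c\epsilon$ from $\partial\Delta_k$. On this interior region, the bounded concave function $w_k(t,x',\cdot)$ enjoys the standard interior-Lipschitz bound $|w_k(t,x',y^\epsilon)-w_k(t,x',(y')^\epsilon)|\leq C(1+|x|+|x'|)\epsilon^{-1}\|y-y'\|$, where the $(1+|x|+|x'|)$ coefficient reflects the linear-growth bound on $w_k$ inherited from Proposition~\ref{Proposition:LipschitzEstimate}. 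The two boundary corrections $|w_k(t,x',y)-w_k(t,x',y^\epsilon)|$ and $|w_k(t,x',y')-w_k(t,x',(y')^\epsilon)|$ are bounded by $C(1+|x|+|x'|)\epsilon^{1/2}$ by reducing to the H\"older-$1/2$ modulus of $v_{k+1}$ in $y$ from Proposition~\ref{Proposition:LipschitzEstimate} via a coupling on the first boundary-exit time of a shifted martingale, with Cauchy--Schwarz producing the square-root loss. Balancing $\epsilon = \|y-y'\|^{1/2}$ yields the claimed H\"older-$1/4$ bound. The main obstacle lies in this last coupling step: obtaining a one-sided comparison between $w_k(t,x',y)$ and $w_k(t,x',y^\epsilon)$ that is valid up to the boundary requires carefully modifying controls to respect the state-constraint $\Delta_k$, which is exactly the difficulty that arises in the Dynamic Programming Principle proofs of Lemma~\ref{Lem:WeakDPPLemma} and Lemma~\ref{Lem:TerminalRelaxation}, and is the fundamental reason for the weaker exponent compared to the time-independent estimate.
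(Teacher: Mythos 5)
Your decomposition into separate $x$-, $t$-, and $y$-increment estimates via the triangle inequality, followed by interior/boundary analysis in $y$, is a reasonable skeleton and differs organizationally from the paper (which instead proves a collection of one-sided local estimates in Steps 1--6 and then chains them together carefully in Step 7 with a case analysis near $t=t_k$ and near $\partial\Delta_k$). The $x$-Lipschitz step is the same as the paper's Step 4 and is fine.

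Your $t$-estimate takes a genuinely different route from the paper: for $t<t'$ you invoke the sub-solution half of the Dynamic Programming Principle (conditioning on $\mathcal{F}_{t'}$ and using the shifted control), then concavity and Jensen, whereas the paper avoids the DPP entirely via a diffusive time-rescaling of the control (Step 2) patched with a terminal-time step (Step 3). Your route is shorter and would even yield exponent $1/2$ uniformly, but it hides a foundational concern: in this paper the DPP (Theorem~\ref{Thm:TimeDependentDPP}) is proved \emph{after} and \emph{using} Proposition~\ref{Proposition:TimeDependentHolderEstimates}, so you must be careful that the inequality $\mathbb{E}[\phi\mid\mathcal{F}_{t'}]\le w_k(t',X_{t'},Y_{t'})$ (in the state-constrained, shifted setting) is truly independent of the H\"older regularity you are proving. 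You would also need measurability/integrability of $w_k(t',X_{t'},Y_{t'})$ before its joint continuity is established; the paper sidesteps all of this.

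The real gap is in the $y$-estimate, and you name it yourself. You decompose into an interior-Lipschitz term for the concave function on $y^\epsilon,(y')^\epsilon$ (correct, standard, and gives $C(1+|x'|)\epsilon^{-1}\|y-y'\|$) and two boundary corrections $|w_k(t,x',y)-w_k(t,x',y^\epsilon)|$ of claimed size $O(\epsilon^{1/2})$. Only one side of that boundary correction follows from concavity: since $y^\epsilon=(1-\epsilon)y+\epsilon y_c$, concavity gives $w_k(t,x',y)-w_k(t,x',y^\epsilon)\le \epsilon\,(w_k(t,x',y)-w_k(t,x',y_c))=O(\epsilon)$, but no inequality in the reverse direction. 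The reverse direction, $w_k(t,x',y^\epsilon)-w_k(t,x',y)\lesssim\epsilon^{1/2}$, is precisely the hard content: you must produce, from an admissible state-constrained martingale started at $y^\epsilon$, a nearby admissible martingale started at $y$ with nearly the same terminal payoff, and the obvious translation of the process exits $\Delta_k$. You flag this as ``the main obstacle'' and appeal to ``carefully modifying controls,'' but you do not supply any construction. This is exactly the role of the paper's Steps 5 and 6: Step 5 rescales the \emph{control} by $1-\|y-y'\|_{\ell^\infty}^{1/2}$ to shrink the process toward its start point (creating a buffer against $\partial\Delta_k$ proportional to $\delta\|y-y'\|^{1/2}$, where $\delta$ is the distance of the coordinates of $y$ from $\{0,1\}$), and Step 6 uses the martingale property to show that coordinates that start small stay small with high probability, allowing a jump onto a lower face. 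Until one of those constructions (or an equivalent) is actually carried out, the $y$-estimate — and hence the proposition — is not proved.
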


The proof of this statement is relatively straightforward but long-winded, so we relegate it to the appendix. We do not claim that these H\"older exponents are sharp.

\begin{proof}

See Appendix~\ref{Appendix:TimeDependentHolderEstimates}.

\end{proof}

%
%

%

The upside of this representation as  an optimal stochastic control problem is that we can characterize each time-dependent value function $w_k$ as a viscosity solution of a corresponding HJB equation. At this point, we can prove a Dynamic Programming Principle for the time-dependent value functions. While these are state-constrained stochastic control problems, we can directly use the a priori continuity of $w_k$ in $y$ and convexity of $\Delta_k$ as in the proof of Lemma~\ref{Lem:WeakDPPLemma}.

\begin{thm}\label{Thm:TimeDependentDPP}
Fix $k\in\{1,\ldots,r-1\}$, $(t,x,y)\in[t_{k-1},t_k)\times\mathbb{R}\times\Delta_k$, and any $h>0$ such that $t+h<t_k$. Let $\{\tau^\alpha\}_{\alpha\in\mathcal{A}_t}$ be a family of stopping times independent of $\mathcal{F}_t$ and valued in $[t,t+h]$. Then
\[\begin{array}{ccl}
w_k(t,x,y) = & \sup\limits_{\alpha\in\mathcal{A}_t} & \mathbb{E}\left[w_k(\tau^\alpha, X^{t,x}_{\tau^\alpha}, Y^{t,y,\alpha}_{\tau^\alpha})\right]\\
&\text{s.t.}& Y^{t,y,\alpha}_u \in \Delta_k\text{ for all }u\geq t\text{, almost-surely.}
\end{array}\]
\end{thm}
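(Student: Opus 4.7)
The plan is to prove the two inequalities separately. For the easy direction
\[
w_k(t,x,y)\leq\sup_{\alpha\in\mathcal{A}_t}\mathbb{E}\bigl[w_k(\tau^\alpha,X^{t,x}_{\tau^\alpha},Y^{t,y,\alpha}_{\tau^\alpha})\bigr],
\]
I fix an arbitrary admissible $\alpha\in\mathcal{A}_t$ with $Y^{t,y,\alpha}_u\in\Delta_k$ almost surely and condition at $\tau^\alpha$. The restriction of $\alpha$ to $[\tau^\alpha,t_k]$ is an admissible control (not necessarily independent of $\mathcal{F}_{\tau^\alpha}$, which is irrelevant in view of Remark~\ref{Remark:EquivalentAdmissibleSet}) for the state-constrained problem defining $w_k(\tau^\alpha,X^{t,x}_{\tau^\alpha},Y^{t,y,\alpha}_{\tau^\alpha})$. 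Hence the inner conditional expectation of the payoff under $\alpha$ given $\mathcal{F}_{\tau^\alpha}$ is bounded above by $w_k$ evaluated at that triple, and the tower property plus a supremum over $\alpha$ closes this direction.

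The reverse inequality $w_k(t,x,y)\geq\sup_{\alpha}\mathbb{E}[w_k(\tau^\alpha,\cdot,\cdot)]$ is established via the measurable-selection-free covering scheme used in Lemma~\ref{Lem:WeakDPPLemma} and originating in \cite{BouchardTouzi2011,BouchardNutz2012}. Fix $\varepsilon>0$ and $\alpha\in\mathcal{A}_t$ with $Y^{t,y,\alpha}\in\Delta_k$. The linear-growth estimate on $w_k$ (from Proposition~\ref{Proposition:TimeDependentHolderEstimates}) and a standard tail bound confine the triple $(\tau^\alpha,X^{t,x}_{\tau^\alpha},Y^{t,y,\alpha}_{\tau^\alpha})$ to a compact set $K\subset[t,t+h]\times\mathbb{R}\times\Delta_k$ outside an event of small probability. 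The joint uniform continuity of $w_k$ on $K$, again from Proposition~\ref{Proposition:TimeDependentHolderEstimates}, provides a modulus $\delta>0$ such that $w_k$ oscillates by at most $\varepsilon$ on every $\delta$-ball. Cover $K$ by countably many such balls $B_i$ with centers $(s_i,x_i,y_i)\in K$, and for each $i$ select an $\varepsilon$-optimal control $\alpha^i\in\mathcal{A}_{s_i}$ for $w_k(s_i,x_i,y_i)$ whose controlled state stays in $\Delta_k$.

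Setting $E_i:=B_i\setminus\bigcup_{j<i}B_j$, the natural candidate
\[
\tilde\alpha_u:=\alpha_u\,\mathbf{1}_{[t,\tau^\alpha)}(u)+\sum_{i\geq 1}\tilde\alpha^i_u\,\mathbf{1}_{[\tau^\alpha,t_k]}(u)\,\mathbf{1}_{E_i}\!\bigl(\tau^\alpha,X^{t,x}_{\tau^\alpha},Y^{t,y,\alpha}_{\tau^\alpha}\bigr)
\]
requires the continuations $\tilde\alpha^i$ to be engineered so that the process starting at $Y^{t,y,\alpha}_{\tau^\alpha}\neq y_i$ remains in $\Delta_k$; this is the main obstacle of the proof, and it is precisely what distinguishes the state-constrained setting from a standard weak DPP. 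As in the proof of Lemma~\ref{Lem:WeakDPPLemma}, the remedy is to exploit convexity of $\Delta_k$ together with concavity of $w_k$ in $y$ from Proposition~\ref{Proposition:Concavity}: shrinking $\alpha^i$ toward a fixed admissible reference control by a factor tending to $1$ as $\delta\downarrow 0$ restores admissibility, and the Hölder-in-$y$ estimate of Proposition~\ref{Proposition:TimeDependentHolderEstimates} controls the resulting payoff loss by a power of $\delta$. Summing over $i$ yields $w_k(t,x,y)\geq\mathbb{E}[w_k(\tau^\alpha,X^{t,x}_{\tau^\alpha},Y^{t,y,\alpha}_{\tau^\alpha})]-C\varepsilon$ for a constant depending only on $f$ and $\mu$; letting $\delta\downarrow 0$, then $\varepsilon\downarrow 0$, and taking the supremum over $\alpha$ finishes the proof. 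The decisive step is the construction of this feasibility-preserving perturbation with a controlled payoff penalty, which is exactly where the a priori concavity and Hölder continuity of $w_k$ replace the measurable-selection arguments typical of DPP proofs.
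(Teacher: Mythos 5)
Your outline of the easy inequality $w_k(t,x,y)\leq\sup_\alpha\mathbb{E}[w_k(\tau^\alpha,\cdot,\cdot)]$ via conditioning, the tower property, and Remark~\ref{Remark:EquivalentAdmissibleSet} matches the paper, which cites this as standard and focuses on the converse. You also correctly identify \emph{where} the state-constrained covering argument breaks: the $\varepsilon$-optimal continuation $\alpha^i$ chosen for the ball center $(s_i,x_i,y_i)$ need not keep $Y$ inside $\Delta_k$ when started from $Y^{t,y,\alpha}_{\tau^\alpha}\neq y_i$. That is the right diagnosis.

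The repair you propose, however, is not the paper's and, as stated, does not close the gap. You say the fix is to "shrink $\alpha^i$ toward a fixed admissible reference control by a factor tending to $1$," and that concavity then controls the loss. But shrinking (the device of Step~5 in Appendix~\ref{Appendix:TimeDependentHolderEstimates}) only restores admissibility when the base point has positive distance to all relevant faces of $\Delta_k$; the paper needs two further ad hoc cases (Steps 5--6) precisely because shrinking alone fails near lower-dimensional faces, and concavity plays no role in that argument. What Appendix~\ref{Appendix:TimeDependentDPP} actually does is different and cleaner: a finite barycentric mesh $\mathcal{P}$ on $\Delta_k$ and a continuous partition-of-unity map $T$ write $Y^{t,y,\alpha}_{\tau^\alpha}$ almost surely as a convex combination $\sum_\ell T_\ell(Y_{\tau^\alpha})\,y_\ell$ of nearby mesh points; the continuation control is the \emph{same} convex combination $\overline\alpha=\sum_\ell T_\ell(Y_{\tau^\alpha})\,\alpha_\ell$ of the near-optimal controls attached to those mesh points, so that $Y^{t,y,\overline\alpha}_\theta=\sum_\ell T_\ell(Y_{\tau^\alpha})\,Y^{t_i,y_\ell,\alpha_{ij\ell}}_\theta\in\Delta_k$ by convexity, with no shrinking at all. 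Concavity (Proposition~\ref{Proposition:Concavity}) is then used for Jensen's inequality, to pass from the payoff evaluated at this convex combination of states to the convex combination of payoffs, picking up no error.

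There is a second gap in your splicing formula $\tilde\alpha_u=\alpha_u\mathbf{1}_{[t,\tau^\alpha)}(u)+\sum_i\tilde\alpha^i_u\mathbf{1}_{[\tau^\alpha,t_k]}(u)\mathbf{1}_{E_i}(\cdot)$: the candidate controls $\alpha^i$ live in $\mathcal{A}_{s_i}$ and run from the deterministic time $s_i$, but you start them at the random time $\tau^\alpha\neq s_i$. The paper resolves this by also discretizing time with a finite mesh $\Lambda$, setting the control to zero between $\tau^\alpha$ and the next mesh time $\overline\tau:=\inf\{u\geq\tau^\alpha:u\in\Lambda\}$, and only then switching to the mesh controls; the payoff penalty for this idle interval is controlled by the time-H\"older estimate of Proposition~\ref{Proposition:TimeDependentHolderEstimates}. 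Without some such device the construction is not well-defined. In short: right inequalities, right obstacle, right list of ingredients, but the key construction --- barycentric convex combination over a product mesh in $(t,y)$, plus Jensen via concavity --- is replaced by a shrinking heuristic that neither preserves the state constraint near faces nor explains why concavity enters.
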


\begin{proof}

See Appendix~\ref{Appendix:TimeDependentDPP}.

\end{proof}

From this result, we  can immediately verify that each time-dependent value function is a viscosity solution of an HJB. Once we have the Dynamic Programming Principle in hand, this result becomes reasonably standard, so we direct the interested reader to \cite{Katsoulakis1994,BouchardNutz2012,Rokhlin2014}.
%

We first define elliptic operators $F_k,G_k : \mathbb{S}^{1+r}\times\Delta_k \to \mathbb{R}$ as
\[\begin{array}{rcl}
F_k(R,y) & := & \sup\left\{\left(\begin{array}{c}1 \\ a\end{array}\right)^\top R \left(\begin{array}{c}1 \\ a\end{array}\right) \mid a \in \mathbb{A}_k(y)\right\}\\
G_k(R,y) & := & \sup\left\{\left(\begin{array}{c}0 \\ a\end{array}\right)^\top R \left(\begin{array}{c}0 \\ a\end{array}\right) \mid a \in \mathbb{A}_k(y),\,\|a\|_{\ell^2}=1\right\},
\end{array}\]
where
\[\mathbb{A}_k(y) := \left\{a\in\mathbb{R}^r\mid \exists\epsilon >0\text{ s.t. }y + a\,(-\epsilon,\epsilon) \subset \Delta_k\right\}.\]
The intuition behind these definitions is that $\mathbb{A}_k(y)$ encodes admissible directions in which a state-constrained martingale starting from $y\in\Delta_k$ may evolve. In particular, a martingale constrained to lie in $\Delta_k$ cannot have non-zero quadratic variation in the outer normal direction on the boundary. The elliptic operator $F_k$ shows up naturally from applying Dynamic Programming, while $G_k$ encodes the concavity in $y$.

The main properties of $F_k$ and $G_k$ are that
\[\begin{array}{rcl}
F_k(R,y) < +\infty & \implies & G_k(R,y)\leq 0\\
G_k(R,y) < 0 & \implies & F_k(R,y) < +\infty.
\end{array}\]
Then following the arguments of \cite{BayraktarSirbu2013}, we can deduce that the value function is a viscosity solution of an equation involving an envelope with $G_k$.


\begin{prop}
The function $w_r$ is the unique solution of the heat equation (in reversed time),
\begin{equation}\nonumber\left\{\begin{array}{ll}
u_t + \frac{1}{2}u_{xx} = 0 & \text{in }[t_{r-1},t_r)\times\mathbb{R}\times\Delta_r \\
u = f & \text{on }\{t=t_r\}\times\mathbb{R}\times\Delta_r.
\end{array}\right.\end{equation}
For each $k\in\{1,\ldots,r-1\}$, $w_k$ is a continuous viscosity solution of the HJB equation,
\begin{equation}\label{Equation:TimeDependentPDE}\left\{\begin{array}{rl}
\min\left\{u_t + F_k(D^2_{xy}u, y),\,G_k(D^2_{xy}u,y)\right\} = 0 & \text{in }[t_{k-1},t_k)\times\mathbb{R}\times\Delta_k \\
u = y_k f(x) + (1-y_k)v_{k+1}(x,P_k(y)) & \text{on }\{t=t_k\}\times\mathbb{R}\times \Delta_k.
\end{array}\right.\end{equation}
\end{prop}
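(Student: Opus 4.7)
The proof splits into the case $k=r$, which is classical, and the case $k<r-1$, which follows the viscosity-solutions machinery for state-constrained control under unbounded diffusion, in the spirit of \cite{BayraktarSirbu2013,BouchardNutz2012}. I will outline both, emphasizing what needs to be supplied.

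For $w_r$, the function is defined by $w_r(t,x,y)=\mathbb{E}[f(X^{t,x}_{t_r})]$, which is independent of $y$ on the singleton $\Delta_r$. Since $f$ is Lipschitz, Feynman--Kac and the smoothing property of the heat semigroup give that $w_r$ is $C^{1,2}$ on $[t_{r-1},t_r)\times\mathbb{R}$ and satisfies $\partial_t w_r + \tfrac12 \partial_{xx} w_r = 0$ with $w_r(t_r,x,y)=f(x)$. Uniqueness in the class of continuous functions with at-most-linear growth follows from the standard maximum principle.

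For $w_k$ with $k\in\{1,\ldots,r-1\}$, my plan is to combine the dynamic programming principle of Theorem~\ref{Thm:TimeDependentDPP} with It\^o's formula applied to smooth test functions. Joint continuity of $w_k$ is already given by Proposition~\ref{Proposition:TimeDependentHolderEstimates}, so viscosity (sub/super) solution definitions make sense. The terminal condition at $t=t_k$ is built into Definition~\ref{Definition:TimeDependentValue} via the structure of the payoff on $\{t=t_k\}$.

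\textbf{Subsolution.} Suppose $\varphi\in C^{1,2}$ satisfies $\varphi\geq w_k$ with equality at $(t_0,x_0,y_0)\in[t_{k-1},t_k)\times\mathbb{R}\times\Delta_k$. I first observe that $G_k(D^2_{xy}\varphi(t_0,x_0,y_0),y_0)\leq 0$: indeed, Proposition~\ref{Proposition:Concavity} gives concavity of $w_k$ in $y$, so any constant admissible control $\alpha_s\equiv a\in\mathbb{A}_k(y_0)$ combined with a deterministic small-time argument (using $X_s=x_0$, i.e.\ ``turning off'' the $x$-direction by a standard rescaling), shows that $(0,a)^\top D^2\varphi(0,a)\leq 0$ whenever $\|a\|=1$. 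This is precisely $G_k\leq 0$, and the subsolution inequality $\min\{\cdot,G_k\}\leq 0$ follows.

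\textbf{Supersolution.} Suppose $\varphi$ touches $w_k$ from below at $(t_0,x_0,y_0)$. Here I use the DPP with a family of short stopping times $\tau^\alpha:=(t_0+h)\wedge\inf\{s>t_0:|X^{t_0,x_0}_s-x_0|+\|Y^{t_0,y_0,\alpha}_s-y_0\|>\delta\}$, restricted to controls $\alpha\in\mathcal{A}_{t_0}$ with $Y\in\Delta_k$ a.s. For any admissible direction $a\in\mathbb{A}_k(y_0)$, taking $\alpha_s\equiv a$ keeps $Y$ in $\Delta_k$ for short times. Applying It\^o to $\varphi$ between $t_0$ and $\tau^\alpha$, using $w_k(\tau^\alpha,\cdot,\cdot)\geq\varphi(\tau^\alpha,\cdot,\cdot)$ from the touching property, dividing by $h$ and letting $h\downarrow 0$ yields
\begin{equation}\nonumber
\varphi_t(t_0,x_0,y_0)+\tfrac12\,(1,a)^\top D^2_{xy}\varphi(t_0,x_0,y_0)(1,a)\geq 0.
\end{equation}
Taking the supremum over $a\in\mathbb{A}_k(y_0)$ gives $\varphi_t+F_k\geq 0$. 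Combining with $G_k$ (which satisfies $G_k\geq 0$ if $F_k<+\infty$ is violated anywhere in the relevant set, handled by the envelope convention), the supersolution condition $\min\{\varphi_t+F_k,G_k\}\geq 0$ follows; the subtle case $F_k(D^2\varphi,y_0)=+\infty$ is handled exactly as in \cite{BayraktarSirbu2013} by noting that then $G_k\geq 0$ automatically.

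\textbf{Main obstacle.} The principal difficulty is the interplay between the unbounded control set (the diffusion coefficient $\alpha$ in the $Y$-equation is not \emph{a priori} bounded) and the state constraint $Y\in\Delta_k$. This is exactly why the HJB appears in envelope form $\min\{\varphi_t+F_k,G_k\}=0$: on the boundary of $\Delta_k$, certain directions of $\alpha$ are forbidden, and the finiteness of $F_k$ vs.\ the sign of $G_k$ encodes this. The rigorous justification of the passage to the supremum over $\mathbb{A}_k(y_0)$, together with the measurability and concatenation of the near-optimal controls used in the DPP, is the technical heart of the argument; fortunately both Theorem~\ref{Thm:TimeDependentDPP} and the \emph{a priori} H\"older continuity of Proposition~\ref{Proposition:TimeDependentHolderEstimates} let us appeal directly to the framework of \cite{BayraktarSirbu2013,Rokhlin2014}, so no new ideas beyond those already established in the paper are required.
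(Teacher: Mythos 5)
Your high-level plan matches the paper's (sketchy) proof: the statement is asserted to follow from Theorem~\ref{Thm:TimeDependentDPP} plus a standard It\^o/test-function argument, with the envelope form of the equation handled as in \cite{BayraktarSirbu2013} and Section~4 of \cite{Pham2009}, and both you and the paper simply defer the technical details to those references. However, the specific details you supply contain a genuine error in the way the concavity of $w_k$ is used, and this carries through to the rest of your sketch.

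The problem is in the ``Subsolution'' paragraph. You take $\varphi\geq w_k$ with equality at $(t_0,x_0,y_0)$ and claim that concavity of $w_k$ in $y$ (Proposition~\ref{Proposition:Concavity}) forces $(0,a)^\top D^2\varphi(t_0,x_0,y_0)\,(0,a)\leq 0$, hence $G_k(D^2_{xy}\varphi,y_0)\leq 0$. This implication does not hold: if $\varphi\geq w_k$ locally with equality at the test point, then $\varphi - w_k$ has a local minimum there, so $D^2_y\varphi\geq D^2_y w_k$, which, combined with $D^2_y w_k\leq 0$, gives no upper bound on $D^2_y\varphi$. A one-dimensional counterexample: $w_k(y)=-y^2$, $\varphi(y)=ay^2$ with $a>0$ touches $w_k$ from above at $y=0$, yet $\varphi''(0)=2a>0$. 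The concavity of $w_k$ pins down the Hessian of a test function only when $\varphi\leq w_k$, i.e.\ when it is touched from \emph{below}: then $\varphi(y)\leq w_k(y)\leq w_k(y_0)+p\cdot(y-y_0)$ for $p$ in the $y$-superdifferential of $w_k$ at $y_0$, so $D^2_y\varphi(y_0)\leq 0$ in every admissible direction. In other words, the $G_k$-inequality coming from concavity belongs to the test-from-below direction, not the test-from-above direction where you placed it. Correspondingly, your ``Supersolution'' paragraph has its It\^o inequality pointed the wrong way: with $\varphi\leq w_k$ touching at $(t_0,x_0,y_0)$, the easy half of the DPP gives $\varphi(t_0,x_0,y_0)=w_k(t_0,x_0,y_0)\geq \mathbb{E}[w_k(\tau^\alpha,\cdot,\cdot)]\geq \mathbb{E}[\varphi(\tau^\alpha,\cdot,\cdot)]$, and passing through It\^o and $h\downarrow 0$ with a constant admissible $\alpha\equiv a$ yields $\varphi_t+\tfrac12(1,a)^\top D^2_{xy}\varphi\,(1,a)\leq 0$, not $\geq 0$ as you wrote. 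So the two halves of your viscosity argument are interchanged (and each therefore carries a wrong inequality sign). To repair this you would need to (i) assign the $G_k$-inequality, obtained from concavity, to the test-from-below direction, and (ii) derive the $F_k$-inequality from the easy/hard halves of the DPP paired with the correct touching direction, keeping the signs consistent with whichever normalization of the envelope operator in~\eqref{Equation:TimeDependentPDE} is used in \cite{BayraktarSirbu2013}. The remainder of your outline — treating $w_r$ via Feynman--Kac, using Proposition~\ref{Proposition:TimeDependentHolderEstimates} for joint continuity, and appealing to the cited frameworks for the passage to the supremum over $\mathbb{A}_k(y_0)$ when $F_k$ is unbounded — is exactly what the paper intends.
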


The proof of this statement follows from a standard argument and an additional analysis of admissible controls on the boundaries. For more details on the introduction of the operator $G_k$ to obtain a variational inequality, we refer the interested reader to \cite{BayraktarSirbu2013} or Section~4 in \cite{Pham2009}.

The more important question, of course, is whether or not one can obtain a uniqueness result for viscosity solutions of \eqref{Equation:TimeDependentPDE}. It is standard to show that \eqref{Equation:TimeDependentPDE} admits a comparison principle when we have Dirichlet conditions on the boundary of $\Delta_k$. In the following, we demonstrate that we can prove uniqueness even with second-order boundary conditions using the special structure of the domain.

\begin{thm}\label{Thm:Uniqueness}
There is a unique continuous viscosity solution of \eqref{Equation:TimeDependentPDE} which satisfies
\[\left|u(t,x,y)\right| \leq C\left(1+\left|x\right|\right)\]
for some $C>0$.
\end{thm}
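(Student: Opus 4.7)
The plan is to proceed by backward induction on $k\in\{1,\ldots,r\}$, reducing uniqueness at each stage to a comparison principle for the variational inequality \eqref{Equation:TimeDependentPDE}. The base case $k=r$ concerns the reversed heat equation with Lipschitz terminal data $f$, for which uniqueness in the class of functions with at-most-linear growth in $x$ is classical (via Feynman--Kac or a Widder-type estimate). For the inductive step, if uniqueness has already been established for $w_{k+1},\ldots,w_r$, then $v_{k+1}=w_{k+1}(t_k,\cdot,\cdot)$ is uniquely determined, so the terminal datum $y_k f(x)+(1-y_k)\,v_{k+1}(x,P_k(y))$ of \eqref{Equation:TimeDependentPDE} is a fixed, jointly continuous function with linear growth in $x$ by Proposition~\ref{Proposition:TimeDependentHolderEstimates}. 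It therefore suffices to show that any upper-semicontinuous subsolution $u$ and lower-semicontinuous supersolution $v$ of \eqref{Equation:TimeDependentPDE} with at most linear growth in $x$ and $u\le v$ on $\{t=t_k\}\times\mathbb{R}\times\Delta_k$ must satisfy $u\le v$ throughout $[t_{k-1},t_k]\times\mathbb{R}\times\Delta_k$.

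I would prove this comparison principle by the classical doubling-of-variables technique with a penalty of the form
\[
\Phi_{\varepsilon,\eta,\lambda}(t,x,y,t',x',y') = u(t,x,y) - v(t',x',y') - \varepsilon^{-1}\bigl(|x-x'|^2+\|y-y'\|^2+|t-t'|^2\bigr) - \eta(|x|^2+|x'|^2) - \tfrac{\lambda}{t_k-t},
\]
for small parameters $\lambda,\eta>0$. The quadratic $x$-penalty combined with the linear-growth hypothesis ensures that the supremum over $([t_{k-1},t_k]\times\mathbb{R}\times\Delta_k)^2$ is attained, the $(t_k-t)^{-1}$ term pushes the maximizer into the parabolic interior in time, and convexity of $\Delta_k$ keeps both $y$-copies in the state-constraint set. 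Ishii's Lemma then produces matrices satisfying the appropriate sub- and supersolution inequalities, and one would seek to derive the usual contradiction as $\varepsilon,\eta,\lambda\downarrow 0$, provided that the boundary behaviour is under control.

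The principal obstacle is precisely this boundary behaviour: on the relative boundary $\partial\Delta_k$, the admissible-direction set $\mathbb{A}_k(y)$ shrinks and $F_k(\,\cdot\,,y)$ may become $+\infty$, so that on boundary faces the variational inequality degenerates and the usual Ishii contradiction cannot be drawn directly. The special structure of $\Delta_k$ circumvents this in two complementary ways. First, following the envelope framework of \cite{BayraktarSirbu2013}, the operator $G_k$ enforces concavity of solutions in the directions tangent to $\Delta_k$, which permits replacement of quadratic $y$-test functions by affine ones near boundary points, thereby neutralizing the problematic Hessian terms produced by Ishii's Lemma on $\partial\Delta_k$. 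Second, each face of $\Delta_k$ is itself a simplex of the same family: for instance $\{y\in\Delta_k:y_k=0\}$ coincides with $\Delta_{k+1}$, and on such a face any admissible $\Delta_k$-valued martingale has its $k$th coordinate identically zero, so the traces of $u$ and $v$ on that face satisfy a lower-dimensional HJB of the same form as \eqref{Equation:TimeDependentPDE} but indexed by some $\ell>k$. Applying the inductive hypothesis on each face uniquely determines the boundary traces of any solution, reducing the comparison argument to one with effectively Dirichlet-type boundary data; the strict perturbation $v+\mu(2t_k-t)$ then absorbs the slack arising from the envelope operator and closes the contradiction.
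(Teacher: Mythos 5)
The main flaw in your proposal is in the final part of the third paragraph, where you try to use the backward induction on $k$ to control the boundary traces of solutions on $\partial\Delta_k$. You claim that every face of $\Delta_k$ is ``a simplex of the same family'' and that the restricted equation is ``of the same form as \eqref{Equation:TimeDependentPDE} but indexed by some $\ell>k$,'' so that the inductive hypothesis pins down boundary traces. This does not hold: only the single face $\{y\in\Delta_k : y_k=0\}$ coincides with $\Delta_{k+1}$; the other faces $\{y_j=0\}$ for $j>k$ are lower--dimensional simplices that are not $\Delta_\ell$ for any $\ell$. Moreover, even on the face $\{y_k=0\}$, the restricted PDE lives on the time interval $[t_{k-1},t_k]$ with terminal datum $v_{k+1}(x,P_k(y))$, whereas the $(k+1)$-th problem lives on $[t_k,t_{k+1}]$ with a different terminal datum. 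The inductive hypothesis (uniqueness of $w_{k+1},\ldots,w_r$) therefore says nothing about the boundary traces of a putative second solution of the $k$-th equation on arbitrary faces of $\Delta_k$, and your reduction to Dirichlet data does not close.

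What the paper actually does is a \emph{different} induction: it holds $k$ fixed and inducts on the dimension of faces of $\Delta_k$. The base case is the zero-dimensional faces (vertices), where the state-constrained martingale must remain constant, so $F_k$ and $G_k$ degenerate and the equation restricted to a vertex (by Touzi's Proposition~6.9 on traces of viscosity solutions) is the one-dimensional heat equation, for which uniqueness in the linear-growth class is classical. Then, for a face $F$ of minimal dimension on which the two solutions differ, the boundary of $F$ consists of faces of strictly lower dimension, where by minimality the two solutions agree; the problem on $F$ is therefore a genuine Dirichlet problem, and a standard comparison principle gives a contradiction. Your idea of using $G_k$ and concavity to tame Ishii's lemma at the boundary is plausible but unnecessary once one restricts to faces in this way, and it does not by itself repair the error about which induction supplies the Dirichlet data. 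If you want to salvage your write-up, replace the appeal to ``the inductive hypothesis indexed by $\ell>k$'' with a nested induction on the dimension of faces of the fixed simplex $\Delta_k$, using heat-equation uniqueness at vertices as the base case.
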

Of course the time-dependent value functions satisfy this linear growth constraint as a corollary of Proposition~\ref{Proposition:TimeDependentHolderEstimates}.

The key idea in this proof is that when we restrict a viscosity solution of \eqref{Equation:TimeDependentPDE} to the relative interior of any face $F$ of $\Delta_k$ in the $y$-coordinate, the restricted function is a viscosity solution of the same equation on a smaller state-space. In particular, when restricted to a vertex the equation reduces to the heat equation, for which we immediately have uniqueness. We then apply the comparison principle corresponding to the equation restricted to an edge using the fact that we have uniqueness on the vertices to deduce uniqueness on edges. We proceed as such on higher dimensional faces until we prove uniqueness on all of $\Delta_k$.

\begin{proof}[Sketch of Proof]

Fix $k\in\{1,\ldots,r-1\}$ and let $u,v : [t_{k-1},t_k]\times\mathbb{R}\times\Delta_k \to \mathbb{R}$ be two continuous viscosity solutions of \eqref{Equation:TimeDependentPDE}. Suppose that $u > v$ at some point. By the terminal condition, this must occur at some $t_0 < t_k$.

Suppose that there exists a vertex $y_0$ of the simplex $\Delta_k$ such that $u\neq v$ at some point when restricting to $y_0$ in the $y$-coordinate. By Proposition~6.9 in \cite{Touzi2013}, we note that both $u$ and $v$ are viscosity solutions of the heat equation when restricting the the vertex $y_0$ in the $y$-coordinate. But this contradicts uniqueness for the heat equation.

Let $F$ be a minimal dimension face of the simplex $\Delta_k$ such that $u\neq v$ at some point when restricting to $F$ in the $y$-coordinate. Again, by Propositon~6.9 in \cite{Touzi2013}, we conclude that both $u$ and $v$ are viscosity solutions of the same equation on the relative interior of $F$. Of course, the boundary of $F$ is the union of lower dimension faces of the simplex $\Delta_k$, so, by the assumed minimal dimension property of $F$, we conclude that $u=v$ when restricting to the boundary of $F$ in the $y$-coordinate. Then by applying the comparison principle for \eqref{Equation:TimeDependentPDE} with Dirichlet boundaries, we deduce that $u=v$ on $F$.

The theorem then follows by considering the $k$-dimensional face, $\Delta_k$ itself.

\end{proof}


\section{Application to Superhedging with a Volatility Outlook}\label{Section:Application}

In this section, we consider an application of distribution-constrained optimal stopping in mathematical finance. In particular, we consider the problem of model-free superhedging of a contingent claim with payoff $f(X_T)$ using only dynamic trading in an underlying asset $X$.

We assume that the price process $X_t$ is a martingale under some unknown martingale measure $\mathbb{Q}$, but do not specify the exact volatility dynamics. However, in this problem, we assume that we have an outlook on the volatility in the form of the distribution of the quadratic variation, $\langle X\rangle_T$.\footnote{We note that, while it may seem unlikely that we have an atomic measure representing our volatility outlook, this is a reasonable starting place for two reasons. It is possible to approximate more general measures by atomic measures because it is possible to prove continuity of the value function in the Wasserstein topology (See Lemma~3.1 in \cite{CoxKallblad2015}). Second, pricing by allowing only a finite number of scenarios, as opposed to specifying a full continuous-valued model, is standard in industry (e.g. the specification of rates, default, and prepayment scenarios in standard models for securitized products).}

\subsection{Model-free superhedging}

We follow the model-free setting of \cite{GalichonLabordereTouzi2014,BonnansTan2013}. Let $\Omega := \left\{\omega\in C([0,T],\mathbb{R})\mid\omega_0=0\right\}$ be the canonical space equipped with uniform norm $\|\omega\|_\infty := \sup\limits_{0\leq t\leq T}|\omega_t|$, $B$ the canonical process, $\mathbb{Q}_0$ the Wiener measure, $\mathbb{F}:=\{\mathcal{F}_t\}_{0\leq t\leq T}$ the filtration generated by $B$, and $\mathbb{F}^+:=\{\mathcal{F}_t^+\}_{0\leq t\leq T}$ the right-limit of $\mathbb{F}$.

Fix some initial value $x_0\in\mathbb{R}$. Then, we denote
\begin{equation}\nonumber
X_t := x_0 + B_t.
\end{equation}
For any real-valued, $\mathbb{F}$-progressively measurable process $\alpha$ satisfying $\int_0^T \alpha_s^2\,ds < \infty$, $\mathbb{Q}_0$-a.s., we define the probability measure on $(\Omega,\mathcal{F})$,
\begin{equation}\nonumber
\mathbb{Q}^\alpha := \mathbb{Q}_0\circ\left(X^\alpha\right)^{-1},
\end{equation}
where
\begin{equation}\nonumber
X^\alpha_t := x_0 + \int_0^t \alpha_r\,dB_r.
\end{equation}
Then $X^\alpha$ is a $\mathbb{Q}^\alpha$-local martingale. We denote by $\mathcal{Q}$ the collection of all such probability measures $\mathbb{Q}$ on $(\Omega,\mathcal{F})$ under which $X$ is a $\mathbb{Q}$-uniformly integrable martingale. The quadratic variation process $\langle X\rangle=\langle B\rangle$ is universally defined under any $\mathbb{Q}\in\mathcal{Q}$, and takes values in the set of all non-decreasing continuous functions from $\mathbb{R}_+$ to $\mathbb{R}_+$.

Let $\mu$ be a given probability distribution of the form (\ref{Eqn:AtomicMeasure}). Then we consider the problem
\begin{equation}\nonumber\begin{array}{rcl}
\overline{U} := &\sup\limits_{\mathbb{Q}\in\mathcal{Q}}& \mathbb{E}^\mathbb{Q}\left[f(X_T)\right]\\
&\text{s.t.} & \langle X\rangle_T\sim\mu,
\end{array}\end{equation}
where $\mathcal{Q}$ is a collection of admissible martingale measures. This corresponds to a model-free superhedging price in a sense made clear by the duality results in, for example, \cite{BonnansTan2013}.

\subsection{Equivalence to distribution-constrained optimal stopping}

We show that this problem is equivalent to distribution-constrained optimal stopping of Brownian motion.

\begin{prop}
We have
\begin{equation}\nonumber\begin{array}{rclccl}
\overline{U} := & \sup\limits_{\mathbb{Q}\in\mathcal{Q}}& \mathbb{E}^\mathbb{Q}\left[f(X_T)\right] & = & \sup\limits_{\tau\in\mathcal{T}} & \mathbb{E}^{\mathbb{Q}_0}\left[f(X_\tau)\right].\\
&\text{s.t.} & \langle X\rangle_T\sim\mu & & \text{s.t.} & \tau \sim \mu.
\end{array}\end{equation}
\end{prop}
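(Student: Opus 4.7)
The plan is to prove the two inequalities separately; each amounts to a time-change argument connecting a martingale with prescribed quadratic-variation distribution to a stopped Brownian motion.

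For $\overline{U} \leq \sup_{\tau} \mathbb{E}^{\mathbb{Q}_0}[f(X_\tau)]$, the natural tool is the Dambis--Dubins--Schwarz theorem. Given any $\mathbb{Q} \in \mathcal{Q}$ with $\langle X\rangle_T \sim \mu$, I would extend $(\Omega, \mathcal{F}, \mathbb{Q})$ if necessary (adjoining an independent Brownian motion to fill possible plateaus of $\langle X\rangle$) to obtain a Brownian motion $\widetilde{B}$ with $X_t - x_0 = \widetilde{B}_{\langle X\rangle_t}$. Setting $\tau := \langle X\rangle_T$, the identity $\{\tau \leq s\} = \{C_s \geq T\}$, where $C$ is the right-continuous inverse of $\langle X\rangle$, shows that $\tau$ is a stopping time of the natural filtration of $\widetilde{B}$. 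By construction $\tau \sim \mu$ and $X_T = x_0 + \widetilde{B}_\tau$, so $\mathbb{E}^\mathbb{Q}[f(X_T)] = \mathbb{E}[f(x_0 + \widetilde{B}_\tau)]$, and passing to the supremum over admissible $\mathbb{Q}$ yields the inequality.

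For the reverse direction, I would give an explicit construction. Fix any continuous, strictly increasing $\phi : [0, T) \to [0, \infty)$ with $\phi(0) = 0$ and $\phi(t) \to \infty$ as $t \uparrow T$. Given $\tau \in \mathcal{T}$ with $\tau \sim \mu$, define
\[X_t := x_0 + W_{\phi(t) \wedge \tau}, \quad 0 \leq t < T, \qquad X_T := x_0 + W_\tau.\]
Because $\tau \leq t_r$ almost surely, one has $\phi(t) \wedge \tau = \tau$ for all $t$ sufficiently close to $T$, so $X$ is continuous on $[0, T]$ and in fact eventually constant near $T$. Each $\phi(t) \wedge \tau$ is a bounded $\mathbb{F}$-stopping time, so optional sampling shows that $X$ is a continuous martingale in its own filtration with $\mathbb{E}[X_T^2] = x_0^2 + \mathbb{E}[\tau] < \infty$, hence $L^2$-bounded and uniformly integrable. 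Its quadratic variation satisfies $\langle X\rangle_T = \tau \sim \mu$. Taking $\mathbb{Q}$ to be the law of $X$ on $\Omega = C([0,T],\mathbb{R})$ therefore places $\mathbb{Q} \in \mathcal{Q}$ and gives $\mathbb{E}^\mathbb{Q}[f(X_T)] = \mathbb{E}[f(x_0 + W_\tau)]$, as required.

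The main technical nuisance lies in the first direction: the Brownian motion $\widetilde{B}$ produced by DDS a priori lives on a (possibly enlarged) space that need not coincide with the fixed probability space supporting $\mathcal{T}$ in \eqref{Equation:MainProblem}. Because the supremum on the right depends only on the joint law of $(W, \tau)$, this is harmless and can be handled by working on any convenient filtered space; but the mild enlargement must be invoked when $\langle X\rangle$ has flat stretches to ensure that $\tau$ is genuinely a stopping time of the natural filtration of some Brownian motion. Once this bookkeeping is carried out, both inequalities combine to give $\overline{U} = \sup_{\tau \in \mathcal{T}, \tau \sim \mu} \mathbb{E}^{\mathbb{Q}_0}[f(X_\tau)]$.
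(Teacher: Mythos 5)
Your argument is correct and follows essentially the same route as the paper's proof (which in turn cites Theorem~2.4 of Bonnans--Tan): Dambis--Dubins--Schwarz for the first inequality and an explicit time-change $\phi:[0,T)\to[0,\infty)$ applied to the stopped Brownian motion for the second, where the paper simply fixes the particular choice $\phi(t)=t/(T-t)$. Your added care regarding filtration enlargement at plateaus of $\langle X\rangle$ and the explicit $L^2$/uniform-integrability check are minor refinements, not a different method.
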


\begin{proof}

This argument can be found in Theorem 2.4 of \cite{BonnansTan2013}. For completeness, we reproduce it below.

Let $\mathbb{Q}\in\mathcal{Q}$ such that the $\mathbb{Q}$-distribution of $\langle X\rangle_T$ is $\mu$. It follows by the Dambis-Dubins-Schwarz Theorem that $X_T=x+\tilde{W}_{\langle X\rangle_T}$ where $\tilde{W}$ is a standard Brownian motion and $\tau := \langle X\rangle_T$ is a stopping time with respect to the time-changed filtration with distribution $\mu$ (See Theorem~4.6 in \cite{KaratzasShreve1991}). Then $\overline{U} \leq \sup\limits_{\tau\sim\mu}\mathbb{E}^{\mathbb{Q}_0}\left[f(X_\tau)\right]$.

Let $\tau$ be a stopping time such that $\tau\sim\mu$. Define a process $X^\tau$ as
\begin{equation}\nonumber
X^\tau_t := x + B_{\tau\wedge\frac{t}{T-t}}.
\end{equation}
Note that $X^\tau$ is a continuous martingale on $[0,T]$ with $\langle X^\tau\rangle_T=\tau$, so $X^\tau$ induces a probability measure $\mathbb{Q}\in\mathcal{Q}$ such that $\langle X^\tau\rangle_T=\tau\sim\mu$. Then the opposite inequality holds.

\end{proof}

Then one can obtain a model-free super-hedging price with a volatility outlook by solving the iterated stochastic control problem in Section \ref{Section:IteratedControl}.

\subsection{Numerical example}\label{Section:Numerics}

In this section we obtain approximate numerical solutions of the distribution-constrained optimal stopping problem using finite-difference schemes.

In particular, we consider two potential outlooks on volatility. In the first, the binary outlook, we assume equal probability between a high- and low-volatility scenario
\begin{equation}
\mu_2 := \frac{1}{2}\delta_{10} + \frac{1}{2}\delta_{20}.\nonumber
\end{equation}
In the second, we augment the binary outlook with a third extreme volatility scenario which occurs with small probability
\begin{equation}
\mu_3 := \frac{9}{20}\delta_{10} + \frac{9}{20}\delta_{20} + \frac{1}{10}\delta_{100}.\nonumber
\end{equation}
Our goal is to compute the model-free superhedging price of a European call option under each volatility outlook. Because we do not restrict to models where the price process is non-negative, we can take the payoff to be $f(x) := x^+$ without loss of generality.

Then, as before, we define value functions for each outlook as
\begin{equation}
v_2(x) := \sup\limits_{\tau\in\mathcal{T}(\mu_2)}\mathbb{E}^x\left[f(W_\tau)\right]\text{ and }v_3(x) := \sup\limits_{\tau\in\mathcal{T}(\mu_3)}\mathbb{E}^x\left[f(W_\tau)\right].\nonumber
\end{equation}

We solve the problem using the iterated stochastic control approach from Section \ref{Section:IteratedControl}. In particular, we obtain a viscosity solution of the corresponding Hamilton-Jacobi-Bellman equation in Section \ref{Subsection:TimeDependentValueFunctions} using a finite-difference scheme. It is important to emphasize that, because of potential degeneracy due to the extra state-variables in $w_2$ and $w_3$, it is critical to use a monotone numerical scheme.

In these results, we apply a version of the wide-stencil scheme introduced in \cite{Oberman2007}. In particular, we approximate the non-linear terms in each equation by monotone finite-difference approximations of the following form
\begin{equation}\nonumber
\sup\limits_{\alpha\in\mathbb{R}}\left[\left(\begin{array}{c}1\\\alpha\end{array}\right)^\top\left(\begin{array}{cc}u_{xx}&u_{xy}\\u_{xy}&u_{yy}\end{array}\right)\left(\begin{array}{c}1\\\alpha\end{array}\right)\right] \approx \max\limits_{k\in\mathcal{K}(t,x,y)} \frac{u(x+h,t,y+k)-2u(x,t,y)+u(x-h,t,y-k)}{h^2},
\end{equation}
where the set $\mathcal{K}(t,x,y)$ is a collection such that $y\pm k$ lies on nearby grid-points. For a rigorous analysis of wide-stencil schemes for degenerate elliptic equations, we refer the reader to \cite{Oberman2008,FroeseOberman2011,Oberman2007}.

For comparison, we consider two main special cases, which we refer to as the ``mean volatility'' value and the ``support-constrained'' value. We define the mean volatility value as the model-free superhedging price obtained by assuming the quadratic variation will be equal to the mean of the distribution in the corresponding distribution-constrained problem. We define their corresponding value functions as $\underline{v}_2$ and $\underline{v}_3$, respectively. On the other hand, we define the support-constrained value as the model-free superhedging price obtained when only restricting the quadratic variation to have the same support as that of the distribution in the corresponding distribution-constrained problem. We define their corresponding value functions as $\overline{v}_2$ and $\overline{v}_3$, respectively.

We expect the following ordering:
\begin{equation}\nonumber
f(x) \leq \underline{v}_2(x) \leq v_2(x) \leq \overline{v}_2(x)
\end{equation}
and
\begin{equation}\nonumber
f(x) \leq \underline{v}_3(x) \leq v_3(x) \leq \overline{v}_3(x).
\end{equation}
Furthermore, we note that we can compute $\underline{v}_2$, $\overline{v}_2$, $\underline{v}_3$, and $\overline{v}_3$ explicitly in terms of heat kernels (See Section~2.3 in \cite{Evans2010}).

\begin{figure}[t]
\centering
\includegraphics[scale=1]{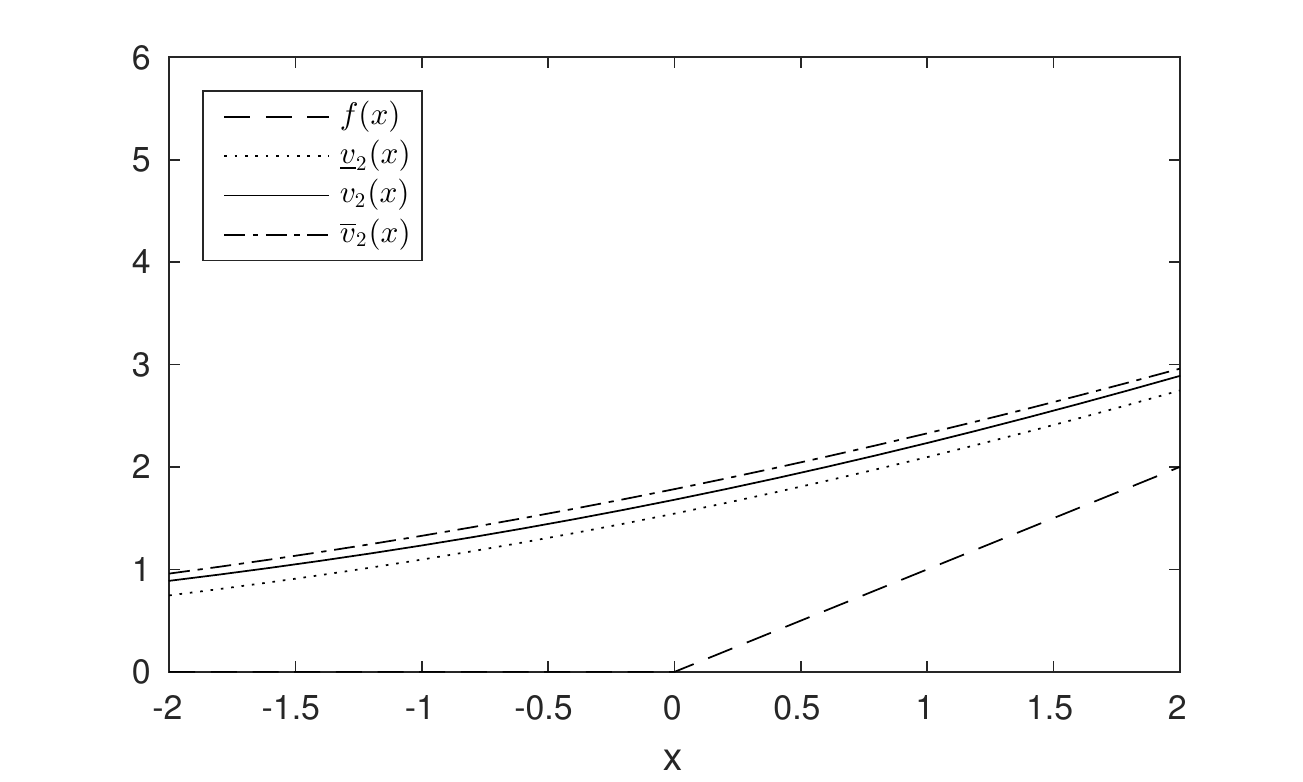}
\caption{Comparison of the model-free superhedging values corresponding to distribution constraints on quadratic variation ($v_2$), support constraints on quadratic variation ($\overline{v}_2$), and averaged quadratic variation ($\underline{v}_2$). Each of these is in the two-atom (binary) volatility outlook. The distribution-constrained value corresponds with the value function of an optimal stopping problem under a two-atom distribution constraint.}\label{Fig:Numerics1}
\end{figure}

\begin{figure}[t]
\centering
\includegraphics[scale=1]{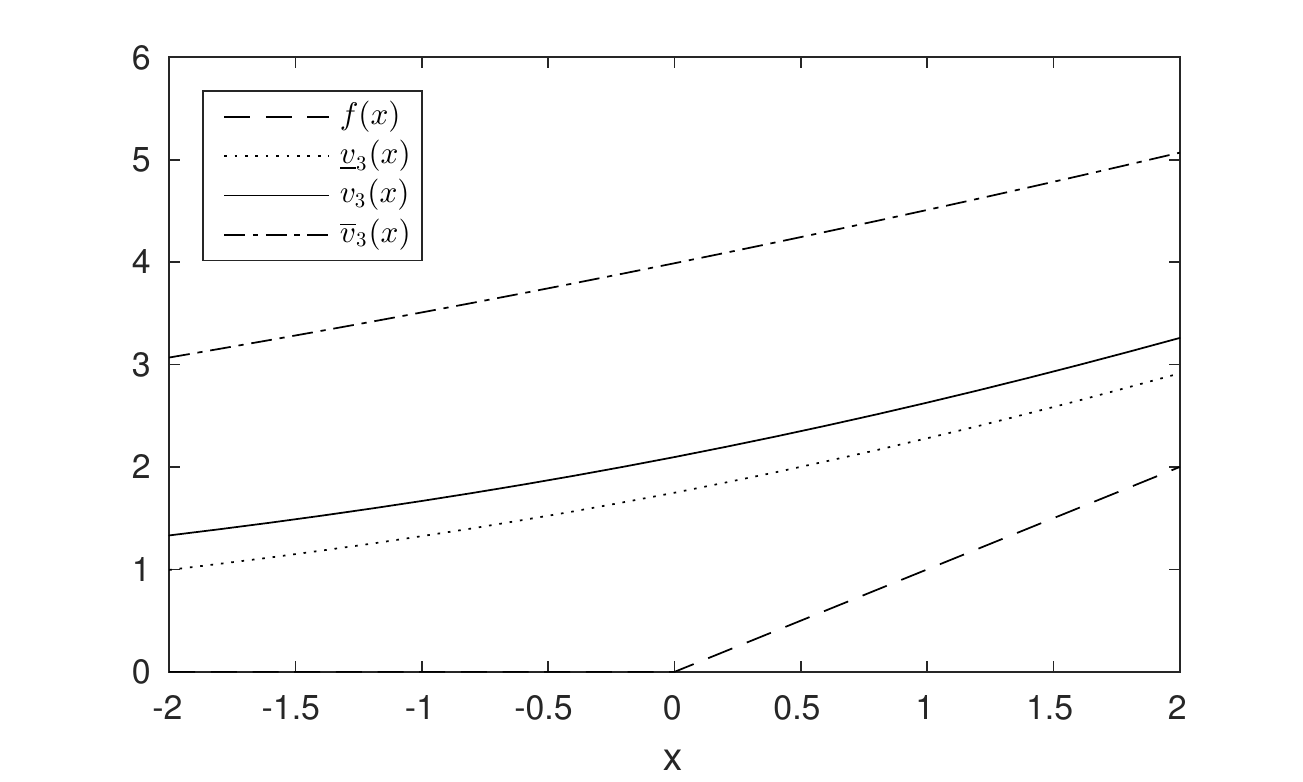}
\caption{Comparison of the model-free superhedging values corresponding to distribution constraints on quadratic variation ($v_3$), support constraints on quadratic variation ($\overline{v}_3$), and averaged quadratic variation ($\underline{v}_3$). Each of these is in the three-atom (trinary) volatility outlook. The distribution-constrained value corresponds with the value function of an optimal stopping problem under a three-atom distribution constraint.}\label{Fig:Numerics2}
\end{figure}

We illustrate the value function for the two- and three-atom problem in Figure \ref{Fig:Numerics1} and Figure \ref{Fig:Numerics2}, respectively. As expected, we see a superhedging value which is increasing in the underlying asset price (or, equivalently, decreasing in the strike price) and respects the bounds implied by the support-constrained and average-volatility models. As expected, the bound provided by the support-constrained superhedging problem is particularly poor in the three-model volatility outlook, where we stipulate that the high volatility (high value) case is rare.

It is interesting to note that careful comparison of the two figures illustrates an increase in superhedging value between the two volatility outlooks which is roughly proportional to the increase in square-root of expected quadratic variation. For example, there is approximately a 25\% increase in value at $x=0$, which is essentially exactly in-line with the 25.2\% increase in square-root of expected quadratic variation between the two outlooks. This matches our intuition that call option superhedging prices should be proportional to expected volatility to first order.

\begin{figure}[t]
\centering
\includegraphics[scale=0.5]{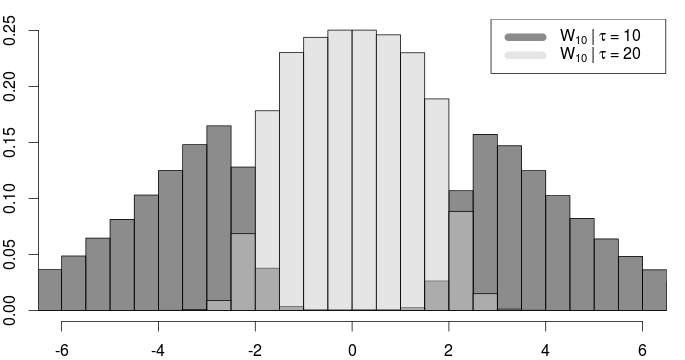}
\caption{Probability density estimates of $W_{10}$ conditional on $\tau=10$ and $\tau=20$ for an optimal stopping time for the two-atom volatility outlook model starting from $W_0=0$. Density estimates were made by Monte Carlo simulations on high-resolution solutions to the associated HJB equations. Sample size, $N=10^7$.}\label{Fig:Numerics3}
\end{figure}

In Figure \ref{Fig:Numerics3}, we provide a probability density estimate of $W_{10}$ conditional on $\tau=10$ and $\tau=20$ for an approximate optimal stopping time for the two-atom volatility outlook model starting from $W_0=0$. We obtain these estimates by performing Monte Carlo simulations with controls estimated from a numerical solution of the associated HJB equations. We use grid spacings $dx=0.1$, $dy=0.005$, and $dt=0.01$. We perform $10^7$ simulations and verify that relevant statistics from the Monte Carlo simulation match those from the finite-difference solutions (e.g., expected payoff, distribution and moments of the stopping time and stopped process) to within a reasonable margin of error.

The density estimates provide insight into the form of the optimal strategy. Recall that the payoff is locally-affine at all points except $x=0$, where it is strictly convex instead. Then we expect an optimal stopping strategy to be one which maximizes local time accumulated at the origin. As expected, we find that the density of $W_{10}$ conditional upon $\tau=10$ is largely concentrated on points away from $x=0$, at which the payoff process is unlikely to spend significant time as a submartingale if we were to choose not to stop.

It is interesting to note the lack of sharp cut-off between the two density estimates. One might expect the optimal strategy to be of a form where there exists a ``stopping region'' and a ``continuation region.'' On the contrary, the smooth overlap of the two density estimates is persistent even as we vary the resolution of the finite-difference solver, which suggests that the true optimal stopping strategy is not of the form $\{\tau=10\}\subset\sigma(W_{10})$. That is, the numerics suggest that optimal stopping strategies may be path-dependent even in simple examples.


\appendix

\section{Proof of Lemma \ref{Lem:WeakDPPLemma}}\label{Appendix:WeakDPPLemma}

This first argument is in the spirit of proofs of the weak dynamic programming principle which avoid measurable selection, as in \cite{BouchardTouzi2011,BouchardNutz2012,BayraktarYao2013}. In these arguments, the authors typically use a covering argument to find a countable selection of $\epsilon$-optimal controls on small balls of the state-space. The main difficulty here is that, while a control may be admissible for the state-constrained problem at one point in state-space, there is no reason to expect it to satisfy the state constraints starting from nearby states.

The new idea in our approach is to cover $\Delta_{k+1}$ with a finite mesh. We show that we can replace the process $Y$ by a modified process $Y^\epsilon$, which lies on the mesh points almost-surely at the terminal time. We construct the new process in a measurable way using the Martingale Representation Theorem on a carefully constructed random variable. Then we show that, using the continuity of $v_{k+1}$, that the objective function along $Y$ is close to that along $Y^\epsilon$ for a fine enough grid.

Once we know we can consider a perturbed process $Y^\epsilon$ which lies on a finite number of points in $\Delta_{k+1}$ at the terminal time almost-surely, we can construct $\epsilon$-optimal stopping times using a standard covering argument in $\mathbb{R}$.

\begin{proof}
Fix $(x,y)\in\mathbb{R}\times\Delta_k$. For convenience of notation, define 
\begin{equation}\nonumber\begin{array}{rccl}
A & := & \sup\limits_{\alpha\in\mathcal{A}_{t_{k-1}}} & \mathbb{E}\left[Y^{(k),t_{k-1},y,\alpha}_{t_k} f(X^{t_{k-1},x}_{t_k})+(1-Y^{(k),t_{k-1},y,\alpha}_{t_k} )v_{k+1}\left(X^{t_{k-1},x}_{t_k},Y^{t_{k-1},y,\alpha}_{t_k}\right)\right] \\
& & \text{s.t.} & Y^{t_{k-1},y,\alpha}_u \in \Delta_k\text{ for all }u\geq t_{k-1}\\
& & & Y^{(k),t_{k-1},y,\alpha}_{t_k} \in \{0,1\}\text{, almost-surely.}
\end{array}\end{equation}
In the remainder of this proof, we do not write $t_{k-1}$ in the superscripts of $X$ and $Y$ because it is always fixed.

\vspace{5mm}\textit{Step 1:} Fix an arbitrary $\epsilon > 0$. Choose $R>0$ large enough that
\[\mathbb{P}\left[\sup\limits_{t_{k-1}\leq u\leq t_k} \left| W_u - W_{t_{k-1}}\right| \geq R\right] \leq \epsilon^2.\]
Because $v_{k+1}$ is continuous on the compact set $[x-R,x+R]\times\Delta_{k+1}$, we can find $\delta > 0$ small enough that
\[\left|v_{k+1}(x',y')-v_{k+1}(x',y'')\right|\leq\epsilon\]
for all $x'\in[x-R,x+R]$ and $y',y''\in\Delta_{k+1}$ such that
\[\|y'-y''\|_{\ell^\infty} \leq \delta.\]
Similarly, because $f$ is Lipschitz and $v_{k+1}$ is Lipschitz in $x$ uniformly in $y$, we can find $\delta > 0$, possibly smaller than before, such that we also have
\[\left|f(x')-f(x'')\right|+\left|v_{k+1}(x',y')-v_{k+1}(x'',y')\right|\leq\epsilon\]
for all $x',x''\in\mathbb{R}$ and $y'\in\Delta_{k+1}$ such that
\[\left|x'-x''\right| \leq \delta.\]

\vspace{5mm}\textit{Step 2:} We now construct a finite mesh on $\Delta_{k+1}$. Let $\mathcal{P} := \{y_j\}_{j=1}^N$ be a finite subset of $\Delta_{k+1}$ with the property that
\begin{itemize}
\item The convex hull of $\mathcal{P}$ is $\Delta_{k+1}$, and
\item Any point $y\in\Delta_{k+1}$ can be written as a convex combination of finitely-many points in $\mathcal{P}$, each contained in a $\delta$-neighborhood of $y$.
\end{itemize}
This is possible by compactness and convexity of $\Delta_{k+1}$. In particular, we can define a continuous function $T : \Delta_{k+1} \to [0,1]^N$ with the properties that
\begin{itemize}
\item $T_j(y) = 0$ for all $y\in\Delta_{k+1}$ such that $|y-y_j|>\delta$
\item $\sum_{j=1}^N T_j(y) = 1$ for all $y\in\Delta_{k+1}$, and
\item $\sum_{j=1}^N y_j T_j(y) = y$ for all $y\in\Delta_{k+1}$.
\end{itemize}
This corresponds to a continuous map from a point $y\in\Delta_{k+1}$ to a probability weighting of points in $\mathcal{P}$ such that $y$ is a convex combination of nearby points in $\mathcal{P}$. Such a map can be obtained by an $\ell^2$-minimization problem, for instance.

\vspace{5mm}\textit{Step 3:} Let $\{A_i\}_{i\geq 1}$ be a countable and disjoint covering of $\mathbb{R}$ with an associated set of points $\{x_i\}$ such that the $\delta$-ball centered at $x_i$ contains the set $A_i$.

For each $i\geq 1$ and $j\in\{1,\ldots,N\}$, let $\tau_{i,j} \in \mathcal{T}_{t_k}$ be a stopping time satisfying
\[\tau_{i,j} \sim \sum_{\ell=1}^r y_j^{(\ell)} \delta_{t_\ell}\]
such that
\[\mathbb{E}\left[f(X^{t_k,x_i}_{\tau_{i,j}})\right] \geq v_{k+1}(x_i,y_j) - \epsilon.\] 
Note that above uses $y_j^{(\ell)}$ to denote the $\ell$th entry of the vector $y_j$.

By choice of $\delta>0$ in the first step and the definition of the sets $A_i$, we have
\begin{eqnarray}
v_{k+1}(x_i,y_j) & \geq & v_{k+1}(x,y_j) - \epsilon \nonumber\\
\mathbb{E}\left[f(X^{t_k,x}_{\tau_{i,j}})\right] & \geq & \mathbb{E}\left[f(X^{t_k,x_i}_{\tau_{i,j}})\right] - \epsilon\nonumber
\end{eqnarray}
for all $x \in A_i$.

Putting these inequalities together, we conclude that
\[\mathbb{E}\left[f(X^{t_k,x}_{\tau_{i,j}})\right] \geq v_{k+1}(x,y_j) - 3\epsilon\]
for all $i\geq 1$, $j\in\{1,\ldots,N\}$, and $x\in A_i$.

%

\vspace{5mm}\textit{Step 4:} Let $\alpha\in\mathcal{A}_{t_{k-1}}$ be an arbitrary control for which $Y^{y,\alpha}_u \in \Delta_k$ for $u\geq t_{k-1}$ and $Y^{(k),y,\alpha}_{t_k} \in \{0,1\}$ almost-surely. For any $0<h<t_k-t_{k-1}$, define two random variables, $M_1$ and $M_2$, as
\begin{eqnarray}\label{eq:defnM12}
M_1 &  := & h^{-1/2}\left(W_{t_k}-W_{t_k-h}\right)\\
M_2 & := & h^{-1/2}\max_{t_k-h\leq s\leq t_k}|W_s-W_{t_k-h}-\delta^{-1}\left(s-t_k+h\right)\left(W_{t_k}-W_{t_k-h}\right)|.\nonumber
\end{eqnarray}
Then $M_1$ and $M_2$ are $\mathcal{F}_{t_k}$-measurable and independent of each other. $M_1$ is equal in distribution to a standard normal distribution, the cumulative distribution function of which we denote by $\Phi$. Similarly, $M_2$ is equal in distribution to the absolute maximum of a standard Brownian bridge on $[0,1]$, the cumulative distribution function of which we denote by $\Phi_{BB}$. Furthermore, if we define
\[\mathcal{G} := \sigma\left(\mathcal{F}_{t_k-h}\cup\sigma(W_{t_k})\right),\]
then $M_1$ is $\mathcal{G}$-measurable, while $M_2$ is independent of $\mathcal{G}$.

Define a random vector $\overline{Y}_{t_k}$ as
\begin{equation}\nonumber
\overline{Y}^{(k)}_{t_k} := 1_{\left\{M_2 \leq \Phi^{-1}_{BB}\left(Y^{(k),y,\alpha}_{t_k-h}\right)\right\}}
\end{equation}
and
\begin{equation}\nonumber\begin{array}{l}
\overline{Y}^{(k+1):r}_{t_k} := 1_{\{M_2 > \Phi^{-1}_{BB}(Y^{(k),y,\alpha}_{t_k-h})\}} \times\\
\hspace{3cm}\sum\limits_{j=1}^N y_j 1_{\{\Phi^{-1}(\sum_{i=1}^{j-1} T_i(P_k(Y^{y,\alpha}_{t_k-h})))<M_1\leq\Phi^{-1}(\sum_{i=1}^j T_i(P_k(Y^{y,\alpha}_{t_k-h})))\}},
\end{array}
\end{equation}
where we follow the conventions that $\Phi^{-1}(0)=-\infty$, $\Phi^{-1}(1)=+\infty$, and that sums over an empty set are zero. We denote the $(k+1)$th through $r$th entry in the random vector by $\overline{Y}^{(k+1):r}_{t_k}$. Then $\overline{Y}_{t_k} \in \Delta_k$ is $\mathcal{F}_{t_k}$-measurable and is constructed to have the key property that
\[\mathbb{E}\left[\overline{Y}_{t_k}\mid\mathcal{F}_{t_k-h}\right] = Y^{y,\alpha}_{t_k-h},\]
almost-surely.

By the Martingale Representation Theorem, there exists $\alpha_\epsilon\in\mathcal{A}$ for which  $Y^{y,\alpha_\epsilon}_{t_k} = \overline{Y}_{t_k}$ almost-surely. It is clear by the construction that $\overline{Y}_{t_k}$ is independent of $\mathcal{F}_{t_{k-1}}$, so we can take $\alpha_\epsilon \in \mathcal{A}_{t_{k-1}}$. Then, by construction, $Y^{y,\alpha_\epsilon}_u \in \Delta_k$ for all $u\geq t_{k-1}$, $Y^{(k),y,\alpha_\epsilon}_{t_k} \in \{0,1\}$, and $Y^{y,\alpha_\epsilon}_{t_k} \in \mathcal{P}$ when $Y^{(k),y,\alpha_\epsilon}_{t_k}=0$, almost-surely.

We now perform a key computation. First note that
\begin{equation}\nonumber\begin{array}{l}
\mathbb{E}\left[Y^{(k),y,\alpha_\epsilon}_{t_k} f(X^{x}_{t_k}) + (1-Y^{(k),y,\alpha_\epsilon}_{t_k})v_{k+1}\left(X^{x}_{t_k},Y^{y,\alpha_\epsilon}_{t_k}\right)\right] \\
\hspace{1cm} = \mathbb{E}\left[1_{\{\overline{Y}^{(k)}_{t_k}=1\}} f(X^{x}_{t_k})\right] + \mathbb{E}\left[1_{\{\overline{Y}^{(k)}_{t_k}=0\}}v_{k+1}\left(X^{x}_{t_k},\overline{Y}_{t_k}\right)\right].
\end{array}\end{equation}
For the first term on the right-hand-side, we simply compute
\begin{equation}\nonumber\begin{array}{l}
\mathbb{E}\left[1_{\{\overline{Y}^{(k)}_{t_k}=1\}} f(X^{x}_{t_k})\right] = \mathbb{E}\left[1_{\{M_2 \leq \Phi^{-1}_{BB}(Y^{(k),y,\alpha}_{t_k-h})\}} f(X^{x}_{t_k})\right] \\
\hspace{2cm} = \mathbb{E}\left[\mathbb{E}\left[1_{\{M_2 \leq \Phi^{-1}_{BB}(Y^{(k),y,\alpha}_{t_k-h})\}}\mid\mathcal{G}\right] f(X^{x}_{t_k})\right] \\
\hspace{2cm} = \mathbb{E}\left[Y^{(k),y,\alpha}_{t_k-h}f(X^{x}_{t_k})\right].
\end{array}\end{equation}
We deal with the second term in a similar way, but the computation is more involved. Note that by construction we have
\[\|\overline{Y}_{t_k}-P_k(Y^{t,y,\alpha}_{t_k-h})\|_{\ell^\infty}\leq\delta\]
almost-surely in the set $\{\overline{Y}^{(k)}_\theta = 0\}$. Recall we also took $\delta$ small enough such that
\[|v_{k+1}(x',y')-v_{k+1}(x',y'')|\leq\epsilon\]
for all $x'\in[x-R,x+R]$ and $y',y''\in\Delta_{k+1}$ such that $\|y'-y''\|_{\ell^\infty}\leq\delta$. But then we can compute
\begin{equation}\nonumber\begin{array}{l}
\mathbb{E}\left[1_{\{\overline{Y}^{(k)}_{t_k}=0\}}v_{k+1}(X^{x}_{t_k},\overline{Y}_{t_k})\right] \\
\hspace{1cm} = \mathbb{E}\left[1_{\{\overline{Y}^{(k)}_{t_k}=0\}}1_{\{|W_{t_k}|\leq R\}}v_{k+1}(X^{x}_{t_k},\overline{Y}_{t_k})\right] + \mathbb{E}\left[1_{\{\overline{Y}^{(k)}_{t_k}=0\}}1_{\{|W_{t_k}|\geq R\}}v_{k+1}(X^{x}_{t_k},\overline{Y}_{t_k})\right] \\
\hspace{1cm} \geq \mathbb{E}\left[1_{\{\overline{Y}^{(k)}_{t_k}=0\}}1_{\{|W_{t_k}|\leq R\}}v_{k+1}(X^{x}_{t_k},P_k(Y^{y,\alpha}_{t_k-h}))\right] \\
\hspace{2cm} + \mathbb{E}\left[1_{\{\overline{Y}^{(k)}_{t_k}=0\}}1_{\{|W_{t_k}|\geq R\}}v_{k+1}(X^{x}_{t_k},\overline{Y}_{t_k})\right] - \epsilon \\
\hspace{1cm} \geq \mathbb{E}\left[1_{\{\overline{Y}^{(k)}_{t_k}=0\}}v_{k+1}(X^{x}_{t_k},P_k(Y^{y,\alpha}_{t_k-h}))\right] \\
\hspace{2cm}- \mathbb{E}\left[1_{\{|W_{t_k}|\geq R\}}\left(\left|v_{k+1}(X^{x}_{t_k},\overline{Y}_{t_k})\right|+\left|v_{k+1}(X^{x}_{t_k},Y^{y,\alpha}_{t_k-h})\right|\right)\right] - \epsilon\\
\hspace{1cm} \geq \mathbb{E}\left[1_{\{\overline{Y}^{(k)}_{t_k}=0\}}v_{k+1}(X^{x}_{t_k},P_k(Y^{y,\alpha}_{t_k-h}))\right] - 2\sqrt{\mathbb{P}\left[|W_{t_k}|\geq R\right]}\,C(1+|x|) - \epsilon \\
\hspace{1cm}\geq \mathbb{E}\left[1_{\{\overline{Y}^{(k)}_{t_k}=0\}}v_{k+1}(X^{x}_{t_k},P_k(Y^{y,\alpha}_{t_k-h}))\right] - 2(1+C)(1+|x|)\epsilon,
\end{array}\end{equation}
where $C>0$ comes from the growth bound from Proposition~\ref{Proposition:LipschitzEstimate}. With this in hand, we now complete the analysis of the second term
\begin{eqnarray}
\mathbb{E}\left[1_{\{\overline{Y}^{(k)}_{t_k}=0\}}v_{k+1}(X^{x}_{t_k},P_k(Y^{y,\alpha}_{t_k-h}))\right] & = & \mathbb{E}\left[1_{\{M_2 > \Phi^{-1}_{BB}\left(Y^{(k),y,\alpha}_{t_k-h}\right)\}}v_{k+1}(X^{x}_{t_k},P_k(Y^{y,\alpha}_{t_k-h}))\right] \nonumber\\
& = & \mathbb{E}\left[\mathbb{E}\left[1_{\{M_2 > \Phi^{-1}_{BB}\left(Y^{(k),y,\alpha}_{t_k-h}\right)\}}\mid\mathcal{G}\right]\,v_{k+1}(X^{x}_{t_k},P_k(Y^{y,\alpha}_{t_k-h}))\right] \nonumber\\
& = & \mathbb{E}\left[(1-Y^{(k),y,\alpha}_{t_k-h})v_{k+1}(X^{x}_{t_k},P_k(Y^{y,\alpha}_{t_k-h}))\right]. \nonumber
\end{eqnarray}
Using the continuity of $f$, $v_{k+1}$, and $P_k$, along with the Dominated Convergence Theorem, we note
\begin{equation}\nonumber\begin{array}{l}
\lim\limits_{h\to 0}\mathbb{E}\left[Y^{(k),y,\alpha}_{t_k-h}f(X^{x}_{t_k}) + (1-Y^{(k),y,\alpha}_{t_k-h})v_{k+1}\left(X^{x}_{t_k},P_k(Y^{y,\alpha}_{t_k-h})\right)\right] \\
\hspace{2cm} = \mathbb{E}\left[Y^{(k),y,\alpha}_{t_k} f(X^{x}_{t_k}) + (1-Y^{(k),y,\alpha}_{t_k})v_{k+1}\left(X^{x}_{t_k},P_k(Y^{y,\alpha}_{t_k})\right)\right] \\
\hspace{2cm} = \mathbb{E}\left[Y^{(k),y,\alpha}_{t_k} f(X^{x}_{t_k}) + (1-Y^{(k),y,\alpha}_{t_k})v_{k+1}\left(X^{x}_{t_k},Y^{y,\alpha}_{t_k}\right)\right].
\end{array}\end{equation}
Then putting these results together, we see that for $h>0$ small enough
\begin{equation}\nonumber\begin{array}{l}
\mathbb{E}\left[Y^{(k),y,\alpha_\epsilon}_{t_k} f(X^{x}_{t_k}) + (1-Y^{(k),y,\alpha_\epsilon}_{t_k})v_{k+1}\left(X^{x}_{t_k},Y^{y,\alpha_\epsilon}_{t_k}\right)\right] \\
\hspace{2cm} \geq \mathbb{E}\left[Y^{(k),y,\alpha}_{t_k} f(X^{x}_{t_k}) + (1-Y^{(k),y,\alpha}_{t_k})v_{k+1}\left(X^{x}_{t_k},Y^{y,\alpha}_{t_k}\right)\right] - \epsilon-2(1+C)(1+|x|)\epsilon.
\end{array}\end{equation}

\vspace{5mm}\textit{Step 5:} Lastly, we intend to construct an $\epsilon$-optimal stopping time using the covering from the second step. Define a stopping time $\tau_\epsilon$ as
\[\tau_\epsilon := t_k + 1_{\{Y^{(k),t,y,\alpha_\epsilon}_{t_k}=0\}}\sum\limits_{i=1}^\infty\sum\limits_{j=1}^N \tau_{i,j}1_{\{X^{x}_{t_k} \in A_i\}}1_{\{Y^{y,\alpha_\epsilon}_{t_k} = y_j\}}.\]
By construction, we have $\tau_\epsilon \sim \sum_{\ell=1}^r y_\ell \delta_{t_\ell}$. We proceed to make a careful computation. First, note that
\begin{equation}\nonumber
\mathbb{E}\left[f(X^{x}_{\tau_\epsilon})\right] = \mathbb{E}\left[1_{\{\tau_\epsilon=t_k\}}f(X^{x}_{t_k})\right] + \mathbb{E}\left[1_{\{\tau_\epsilon>t_k\}}f(X^{x}_{\tau_\epsilon})\right].
\end{equation}
We focus on the second term. In particular, we have
\begin{equation}\nonumber\begin{array}{l}
\mathbb{E}\left[1_{\{\tau_\epsilon>t_k\}}f(X^{x}_{\tau_\epsilon})\right] = \sum\limits_{i=1}^\infty\sum\limits_{j=1}^N \mathbb{E}\left[1_{\{Y^{(k),y,\alpha_\epsilon}_{t_k}=0\}}1_{\{X^{x}_{t_k} \in A_i\}}1_{\{Y^{y,\alpha_\epsilon}_{t_k}=y_j\}} f(X^{x}_{\tau_{i,j}})\right] \\
\hspace{1cm}= \sum\limits_{i=1}^\infty\sum\limits_{j=1}^N \mathbb{E}\left[1_{\{Y^{(k),y,\alpha_\epsilon}_{t_k}=0\}}1_{\{X^{x}_{t_k} \in A_i\}}1_{\{Y^{y,\alpha_\epsilon}_{t_k}=y_j\}} \mathbb{E}\left[f(X^{x}_{\tau_{i,j}})\mid\mathcal{F}_{t_k}\right]\right]  \\
\hspace{1cm}\geq \sum\limits_{i=1}^\infty\sum\limits_{j=1}^N \mathbb{E}\left[1_{\{Y^{(k),y,\alpha_\epsilon}_{t_k}=0\}}1_{\{X^{x}_{t_k} \in A_i\}}1_{\{Y^{y,\alpha_\epsilon}_{t_k}=y_j\}} v_{k+1}(X^{x}_{t_k},Y^{y,\alpha_\epsilon}_{t_k})\right] - 3\epsilon \\
\hspace{1cm} = \mathbb{E}\left[(1-Y^{(k),y,\alpha_\epsilon}_{t_k})v_{k+1}(X^{x}_{t_k},Y^{y,\alpha_\epsilon}_{t_k})\right] - 3\epsilon,
\end{array}\end{equation}
where the inequality follows from the construction in the third step and the independence of the stopping times $\tau_{ij}$ with respect to $\mathcal{F}_{t_k}$.

Then we conclude
\[\mathbb{E}\left[f(X^{x}_{\tau_\epsilon})\right] \geq \mathbb{E}\left[Y^{(k),y,\alpha_\epsilon}_{t_k} f(X^{x}_{t_k}) + (1-Y^{(k),y,\alpha_\epsilon}_{t_k})v_{k+1}(X^{x}_{t_k},Y^{y,\alpha_\epsilon}_{t_k})\right] - 3\epsilon.\]
Combining this with the main inequality from the previous step, we obtain
\begin{eqnarray}
v_k(x,y) & \geq & \mathbb{E}\left[f(X^{x}_{\tau_\epsilon})\right]\nonumber\\
& \geq & \mathbb{E}\left[Y^{(k),y,\alpha_\epsilon}_{t_k} f(X^{x}_{t_k}) + (1-Y^{(k),y,\alpha_\epsilon}_{t_k})v_{k+1}(X^{x}_{t_k},Y^{y,\alpha_\epsilon}_{t_k})\right] - 3\epsilon \nonumber\\
& \geq & \mathbb{E}\left[Y^{(k),y,\alpha}_{t_k} f(X^{x}_{t_k}) + (1-Y^{(k),y,\alpha}_{t_k})v_{k+1}\left(X^{x}_{t_k},Y^{y,\alpha}_{t_k}\right)\right] - 4\epsilon - 2(1+C)(1+|x|)\epsilon.\nonumber
\end{eqnarray}
Because $\epsilon$ and $\alpha$ were arbitrary, then we conclude $A\leq v_k(x,y)$.

\vspace{5mm}\textit{Step 6:} Let $\tau\in\mathcal{T}_{t_k}$ be an arbitrary stopping time such that $\tau \sim \sum_{\ell=1}^r y_\ell \delta_{t_\ell}$. Define a martingale as
\begin{equation}\nonumber
Y^{(i)}_t := \mathbb{E}\left[1_{\{\tau=t_i\}}\mid\mathcal{F}_t\right]
\end{equation}
for all $t\geq 0$ and each $i\in\{1,\ldots,r\}$. We can easily check that $Y^{(i)}_0 = y_i$ for each $i\in\{1,\ldots,r\}$ and
\begin{equation}\nonumber
Y^{(1)}_t + \cdots + Y^{(r)}_t = \mathbb{E}\left[1_{\{\tau=t_1\}}+\cdots+1_{\{\tau=t_r\}}\mid\mathcal{F}_t\right] = 1.
\end{equation}
Then if we consider $Y$ as an $\mathbb{R}^r$-valued martingale with $Y^{(i)}_t \equiv 0$ for all $i\in\{1,\ldots,k-1\}$, then we see $Y_t \in \Delta_k$ for each $t\geq 0$. Finally, we have
\begin{equation}\nonumber
Y^{(k)}_{t_k} = \mathbb{E}\left[1_{\{\tau=t_k\}}\mid\mathcal{F}_{t_k}\right] = 1_{\{\tau=t_k\}} \in \{0,1\}.
\end{equation}

Then by the Martingale Representation Theorem, there exists $\alpha\in\mathcal{A}_{t_k}$ for which $Y^{y,\alpha}_t = Y_t$ for all $t\geq 0$, almost-surely. We can compute
\begin{eqnarray}
\mathbb{E}\left[f(X^x_\tau)\right] & = & \mathbb{E}\left[1_{\{\tau=t_k\}}f(X^x_{t_k})+1_{\{\tau>t_k\}}f(X^x_\tau)\right]\nonumber\\
& = & \mathbb{E}\left[Y^{(k),y,\alpha}_{t_k} f(X^x_{t_k})+(1-Y^{(k),y,\alpha}_{t_k} )\mathbb{E}\left[f(X^x_\tau)\mid\mathcal{F}_{t_k}\right]\right].\nonumber
\end{eqnarray}
On the set $\{\tau>t_k\}$, we have
\begin{equation}\nonumber
\mathbb{P}\left[\tau=t_i\mid\mathcal{F}_{t_k}\right] = \mathbb{E}\left[1_{\{\tau=t_i\}}\mid\mathcal{F}_{t_k}\right] = Y^{(i)}_{t_k}
\end{equation}
for each $i\in\{k+1,\ldots,r\}$. For almost every $\omega\in\{\tau>t_k\}$, we have
\begin{equation}\nonumber
\mathbb{E}\left[f(X^x_\tau)\mid\mathcal{F}_{t_k}\right] \leq v_{k+1}\left(X^x_{t_k},Y^{y,\alpha}_{t_k}\right)
\end{equation}
by the Strong Markov Property and stationarity properties of Brownian motion. Then we conclude
\[\mathbb{E}\left[f(X^x_\tau)\right] \leq \mathbb{E}\left[Y^{(k),y,\alpha}_{t_k} f(X^x_{t_k})+(1-Y^{(k),y,\alpha}_{t_k} )v_{k+1}(X^x_{t_k},Y^{y,\alpha}_{t_k})\right] \leq A.\]
Because $\tau$ was an arbitrary stopping time, this implies
\begin{equation}\nonumber
v_k(x,y) \leq A
\end{equation}

\end{proof}

\section{Proof of Lemma~\ref{Lem:TerminalRelaxation}}\label{Appendix:ProofTerminalRelaxation}

The main idea of this argument is that we can take a controlled process $Y$, which does not satisfy $Y^{(k)}_{t_k}\in\{0,1\}$, and modify it on an interval $[t_k-h,t_k]$ to a perturbed process $Y^\epsilon$ with the properties that $Y_{t_k-h}=Y^\epsilon_{t_k-h}$ and $Y^{\epsilon,(k)}_{t_k}\in\{0,1\}$. In particular, we may do this in a way that does not appreciably change the expected payoff.

One key idea, which we draw the reader's attention towards, is the use of the Brownian bridge over $[t_k-h,t_k]$ in the construction. This construction is in the spirit of Corollary \ref{Cor:ExistenceConstruction2}. While one might initially attempt a construction similar to Corollary \ref{Cor:ExistenceConstruction}, using a Brownian bridge instead of Brownian increments allows us to condition on $W_{t_k}$ at a key point in the argument.

\begin{proof}
Fix $(x,y)\in\mathbb{R}\times\Delta_k$. For convenience of notation, define
\begin{equation}\nonumber\begin{array}{rccl}
A & := & \sup\limits_{\alpha\in\mathcal{A}_{t_{k-1}}} & \mathbb{E}\left[Y^{(k),t_{k-1},y,\alpha}_{t_k} f(X^{t_{k-1},x}_{t_k})+(1-Y^{(k),t_{k-1},y,\alpha}_{t_k} )v_{k+1}\left(X^{t_{k-1},x}_{t_k},Y^{t_{k-1},y,\alpha}_{t_k}\right)\right] \\
& & \text{s.t.} & Y^{t_{k-1},y,\alpha}_u \in \Delta_k\text{ for }u\geq t_{k-1}\\
& & & Y^{(k),t_{k-1},y,\alpha}_{t_k} \in \{0,1\}\text{ almost-surely},
\end{array}\end{equation}
and
\begin{equation}\nonumber\begin{array}{rccl}
B & := & \sup\limits_{\alpha\in\mathcal{A}_{t_{k-1}}} & \mathbb{E}\left[Y^{(k),t_{k-1},y,\alpha}_{t_k} f(X^{t_{k-1},x}_{t_k})+(1-Y^{(k),t_{k-1},y,\alpha}_{t_k} )v_{k+1}\left(X^{t_{k-1},x}_{t_k},P_k(Y^{t_{k-1},y,\alpha}_{t_k})\right)\right] \\
& & \text{s.t.} & Y^{t_{k-1},y,\alpha}_u \in \Delta_k\text{ for }u\geq t_{k-1}.
\end{array}\end{equation}
By Lemma \ref{Lem:WeakDPPLemma}, we have $v_k(x,y) = A$. In the remainder of the proof we withold the superscript $t_{k-1}$ on $X$ and $Y$ for the sake of brevity. 

\vspace{5mm}\textit{Step 1:} Let $\alpha\in\mathcal{A}_{t_{k-1}}$ be an arbitrary control for which $Y^{y,\alpha}_u \in \Delta_k$ for $u\geq t_{k-1}$ and $Y^{(k),y,\alpha}_{t_k}\in\{0,1\}$ almost-surely. Note that $Y^{y,\alpha}_{t_k} = P_k(Y^{y,\alpha}_{t_k})$ on the set $\{Y^{(k),y,\alpha}_{t_k}=0\}$, almost-surely. Then
\begin{equation}\nonumber\begin{array}{l}
\mathbb{E}\left[Y^{(k),y,\alpha}_{t_k} f(X^x_{t_k})+(1-Y^{(k),y,\alpha}_{t_k} )v_{k+1}\left(X^x_{t_k},Y^{y,\alpha}_{t_k}\right)\right] \\
\hspace{2cm}= \mathbb{E}\left[Y^{(k),y,\alpha}_{t_k} f(X^x_{t_k})+(1-Y^{(k),y,\alpha}_{t_k} )v_{k+1}\left(X^x_{t_k},P_k(Y^{y,\alpha}_{t_k})\right)\right]\\
\hspace{2cm} \leq B.
\end{array}\end{equation}
Because $\alpha$ was arbitrary, we conclude $A\leq B$.

\vspace{5mm}\textit{Step 2:} Let $\alpha\in\mathcal{A}_{t_{k-1}}$ be an arbitrary control for which $Y^{y,\alpha}_u\in\Delta_k$ for $u\geq t_{k-1}$, almost-surely. For any $0<h<t_k-t_{k-1}$, define a random variable $M$ as
\[M := h^{-1/2}\max_{t_k-h\leq s\leq t_k}|W_s-W_{t_k-h}-h^{-1}\left(s-t_k+h\right)\left(W_{t_k}-W_{t_k-h}\right)|.\]
Then $M$ is $\mathcal{F}_{t_k}$-measurable and is equal in distribution to the absolute maximum of a standard Brownian bridge on $[0,1]$, the cumulative distribution function of which we denote by $\Phi_{BB}$. If we define $\mathcal{G} := \sigma\left(\mathcal{F}_{t_k-h}\cup\sigma(W_{t_k})\right)$, then $M$ is independent of $\mathcal{G}$.

Define a random vector $\overline{Y}_{t_k}$ as
\begin{equation}\nonumber
\overline{Y}^{(k)}_{t_k} := 1_{\{M \leq \Phi_{BB}^{-1}\left(Y^{(k),y,\alpha}_{t_k-h}\right)\}}
\end{equation}
and
\begin{equation}\nonumber
\overline{Y}^{(k+1):r}_{t_k} := P_k(Y^{y,\alpha}_{t_k-h})1_{\{M > \Phi_{BB}^{-1}\left(Y^{(k),y,\alpha}_{t_k-h}\right)\}},
\end{equation}
where $\overline{Y}^{(k+1):r}_{t_k}$ denotes the $(k+1)$th through $r$th element in the vector. Let $\overline{Y}^{(i)}_{t_k} \equiv 0$ for any $i\in\{1,\ldots,k-1\}$. Then $\overline{Y}_{t_k}$ is $\mathcal{F}_{t_k}$-measurable and has the key property that $\mathbb{E}\left[\overline{Y}_{t_k}\mid\mathcal{F}_{t_k-h}\right] = Y^{y,\alpha}_{t_k-h}$. We also note that $\mathbb{E}\left[1_{\{\overline{Y}^{(k)}_{t_k}=1\}}\mid\mathcal{G}\right] = Y^{(k),y,\alpha}_{t_k-h}$.

By the Martingale Representation Theorem, there exists $\alpha_\epsilon\in\mathcal{A}_{t_{k-1}}$ such that $Y^{y,\alpha_\epsilon}_u \in \Delta_k$ for $u\geq t_{k-1}$, $Y^{(k),y,\alpha_\epsilon}_{t_k} \in \{0,1\}$, and $Y^{y,\alpha_\epsilon}_{t_k} = \overline{Y}_{t_k}$ almost-surely. We can then compute
\begin{equation}\nonumber\begin{array}{l}
\mathbb{E}\left[Y^{(k),y,\alpha_\epsilon}_{t_k} f(X^x_{t_k})+(1-Y^{(k),y,\alpha_\epsilon}_{t_k})v_{k+1}\left(X^x_{t_k},Y^{y,\alpha_\epsilon}_{t_k}\right)\right] \\
\hspace{2cm} = \mathbb{E}\left[1_{\{\overline{Y}^{(k)}_{t_k}=1\}} f(X^x_{t_k})+1_{\{\overline{Y}^{(k)}_{t_k}=0\}}v_{k+1}\left(X^x_{t_k},P_k(Y^{y,\alpha}_{t_k-h})\right)\right]  \\
\hspace{2cm} = \mathbb{E}\left[\mathbb{E}\left[1_{\{\overline{Y}^{(k)}_{t_k}=1\}}\mid\mathcal{G}\right] f(X^x_{t_k})+\mathbb{E}\left[1_{\{\overline{Y}^{(k)}_{t_k}=0\}}\mid\mathcal{G}\right]v_{k+1}\left(X^x_{t_k},P_k(Y^{y,\alpha}_{t_k-h})\right)\right] \\
\hspace{2cm} = \mathbb{E}\left[Y^{(k),y,\alpha}_{t_k-h} f(X^x_{t_k})+(1-Y^{(k),y,\alpha}_{t_k-h})v_{k+1}\left(X^x_{t_k},P_k(Y^{y,\alpha}_{t_k-h})\right)\right] .
\end{array}\end{equation}

But by the continuity and growth bounds of $f$ and $v_{k+1}$, we can apply the Dominated Convergence Theorem to see
\begin{equation}\nonumber\begin{array}{l}
\lim\limits_{\delta\to 0^+}\mathbb{E}\left[Y^{(k),y,\alpha}_{t_k-h} f(X^x_{t_k})+(1-Y^{(k),y,\alpha}_{t_k-h})v_{k+1}\left(X^x_{t_k},P_k(Y^{y,\alpha}_{t_k-h})\right)\right]\\
\hspace{2cm} = \mathbb{E}\left[Y^{(k),y,\alpha}_{t_k} f(X^x_{t_k})+(1-Y^{(k),y,\alpha}_{t_k})v_{k+1}\left(X^x_{t_k},P_k(Y^{y,\alpha}_{t_k})\right)\right].
\end{array}\end{equation}
So then for any $\epsilon>0$, we may take $h>0$ small enough that
\begin{equation}\nonumber\begin{array}{l}
\mathbb{E}\left[Y^{(k),y,\alpha}_{t_k} f(X^x_{t_k})+(1-Y^{(k),y,\alpha}_{t_k})v_{k+1}\left(X^x_{t_k},P_k(Y^{y,\alpha}_{t_k})\right)\right] \\
\hspace{2cm} \leq \mathbb{E}\left[Y^{(k),y,\alpha}_{t_k-h} f(X^x_{t_k})+(1-Y^{(k),y,\alpha}_{t_k-h})v_{k+1}\left(X^x_{t_k},P_k(Y^{y,\alpha}_{t_k-h})\right)\right] + \epsilon \\
\hspace{2cm} = \mathbb{E}\left[Y^{(k),y,\alpha_\epsilon}_{t_k} f(X^x_{t_k})+(1-Y^{(k),y,\alpha_\epsilon}_{t_k})v_{k+1}\left(X^x_{t_k},Y^{y,\alpha_\epsilon}_{t_k}\right)\right] +\epsilon \\
\hspace{2cm} \leq A + \epsilon.
\end{array}\end{equation}
Because $\epsilon$ and $\alpha$ were arbitrary, we conclude $B\leq A$.

\end{proof}

\section{Proof of Proposition~\ref{Proposition:TimeDependentHolderEstimates}}\label{Appendix:TimeDependentHolderEstimates}

\begin{proof}

We proceed in several steps, each relating the value function between nearby points. In the first three points, we consider a shift backwards in the time variable, a shift forward in time variable away from the terminal time, and lastly a jump onto the terminal time. In the fourth step, we discuss arbitrary perturbations in $x$. In the fifth step, we discuss a perturbation inside the interior of some face of $\Delta_k$, including a possible jump off the face. In the sixth step, we consider a jump from an interior point onto a face of $\Delta_k$. Lastly, in the final step, we discuss how to put these together into one coherent bound.

\vspace{5mm}\textit{Step 1:} Fix $(t,x,y)\in[t_{k-1},t_k]\times\mathbb{R}\times\Delta_k$ and $t'\in[t_{k-1},t_k]$ such that $t'\leq t$. Let $\alpha\in\mathcal{A}$ be an arbitrary control for which $Y^{t,y,\alpha}_u \in \Delta_k$ for all $u\geq t$, almost-surely. Define a new control $\alpha' \in \mathcal{A}$ as
\[\alpha'_u := 1_{\{u\geq t\}}\alpha_u,\]
for all $u\geq t'$. We can see that $Y^{t',y,\alpha'}_u \in \Delta_k$ for all $u\in[t', t_k]$ and $Y^{t',y,\alpha'}_{t_k} = Y^{t,y,\alpha}_{t_k}$, almost-surely. Then
\begin{eqnarray}
w_k(t',x,y) & \geq & \mathbb{E}\left[Y^{(k),t',y,\alpha'}_{t_k} f(X^{t',x}_{t_k}) + (1-Y^{(k),t',y,\alpha'}_{t_k} ) v_{k+1}(X^{t',x}_{t_k},P_k(Y^{t',y,\alpha'}_{t_k}))\right] \nonumber\\
& = & \mathbb{E}\left[Y^{(k),t,y,\alpha}_{t_k} f(X^{t',x}_{t_k}) + (1-Y^{(k),t,y,\alpha}_{t_k} ) v_{k+1}(X^{t',x}_{t_k},P_k(Y^{t,y,\alpha}_{t_k}))\right] \nonumber\\
& \geq & \mathbb{E}\left[Y^{(k),t,y,\alpha}_{t_k} f(X^{t,x}_{t_k}) + (1-Y^{(k),t,y,\alpha}_{t_k} ) v_{k+1}(X^{t,x}_{t_k},P_k(Y^{t,y,\alpha}_{t_k}))\right] \nonumber\\
& & \hspace{1cm}- 2C\,\mathbb{E}\left|W_{t}-W_{t'}\right|,\nonumber
\end{eqnarray}
where $C>0$ is at least as large as the Lipschitz constants in $x$ for $f$ and $v_{k+1}$. But recall that for Brownian motion we can find $C>0$ such that
\[\mathbb{E}\left|W_t-W_{t'}\right| \leq C\left|t-t'\right|^{1/2}.\]
Using this and the fact that $\alpha$ was arbitrary, we then conclude
\[w_k(t,x,y) - w_k(t',x,y) \leq 2C^2\left|t-t'\right|^{1/2}.\]

\vspace{5mm}\textit{Step 2:} Fix $(t,x,y)\in[t_{k-1},t_k)\times\mathbb{R}\times\Delta_k$ and $t'\in[t_{k-1},t_k)$ such that $t\leq t'$. Define
\[\eta := \sqrt{\frac{t_k-t}{t_k - t'}} \geq 1.\]
Let $\alpha\in\mathcal{A}$ be an arbitrary control for which $Y^{t,y,\alpha'}_u \in \Delta_k$ for all $u\geq t$, almost-surely. Define new control $\alpha'\in\mathcal{A}$ as
\[\alpha'_u := \eta\,\alpha_{\tau_u},\]
where
\[\tau_u := \eta^2(u-t') + t\]
for all $u\in[t',t_k]$. Note that $\alpha'_u \in \mathcal{F}_{\tau_u}$ by definition. Because $\tau_u \leq u$, we then have $\alpha'_u \in \mathcal{F}_u$ so it is an adapted control. We can also check by the time-change properties of the It\^{o} Integral that
\[\left(W_{t_k}-W_{t'},Y^{t',y,\alpha'}_{t_k}\right) \stackrel{(d)}{=} \left(\eta^{-1}(W_{t_k}-W_{t}),Y^{t,y,\alpha}_{t_k}\right).\]
Then $Y^{t',y,\alpha'}_u \in \Delta_k$ for all $u\in[t',t_k]$, almost-surely, by the convexity of $\Delta_k$ and the martingale property of $Y$. Then $\alpha'$ is an admissible control.

We can compute
\begin{eqnarray}
w_k(t',x,y) & \geq & \mathbb{E}\left[Y^{(k),t',y,\alpha'}_{t_k} f(X^{t',x}_{t_k}) + (1-Y^{(k),t',y,\alpha'}_{t_k} ) v_{k+1}(X^{t',x}_{t_k},P_k(Y^{t',y,\alpha'}_{t_k}))\right] \nonumber\\
& \geq & \mathbb{E}\left[Y^{(k),t,y,\alpha}_{t_k} f(X^{t,x}_{t_k}) + (1-Y^{(k),t,y,\alpha'}_{t_k} ) v_{k+1}(X^{t,x}_{t_k},P_k(Y^{t,y,\alpha'}_{t_k}))\right] \nonumber\\
& & \hspace{1cm} - 2C(1-\eta^{-1})\mathbb{E}\left|W_{t_k}-W_{t}\right| \nonumber.
\end{eqnarray}
Now we proceed to bound the final term in this inequality. First, note that by the convexity of $x\mapsto x^{-1/2}$, we can a bound
\[\eta^{-1} = \left(1 + \frac{t'-t}{t_k-t'}\right)^{-1/2} \geq 1 - \frac{t'-t}{2(t_k-t')}.\]
Furthermore, for large enough $C>0$, depending only upon $t_r$, we have $\mathbb{E}\left|W_u\right| \leq C$ for all $u\in[0,t_r]$. Then we can estimate
\[(1-\eta^{-1})\mathbb{E}\left|W_{t_k}-W_{t}\right| \leq 2C\frac{t'-t}{2(t_k-t')}.\]
Putting these together and recalling that $\alpha$ was arbitrary, we conclude
\[w_k(t,x,y) - w_k(t',x,y) \leq 4C^2\frac{\left|t-t'\right|}{t_k-t'}.\]

\vspace{5mm}\textit{Step 3: } Fix $(t,x,y)\in[t_{k-1},t_k]\times\mathbb{R}\times\Delta_k$ and let $\alpha\in\mathcal{A}$ be an arbitrary control for which $Y^{t,y,\alpha}_u \in \Delta_k$ for all $u\geq t$, almost-surely. By the Lipschitz continuity of $f$ and $v_{k+1}$ in $x$ we can compute
\[\begin{array}{l}
\mathbb{E}\left[Y^{(k),t,y,\alpha}_{t_k} f(X^{t,x}_{t_k}) + (1-Y^{(k),t,y,\alpha}_{t_k} ) v_{k+1}(X^{t,x}_{t_k},P_k(Y^{t,y,\alpha}_{t_k}))\right] \\
\hspace{1cm} \leq \mathbb{E}\left[Y^{(k),t,y,\alpha}_{t_k} f(x) + (1-Y^{(k),t,y,\alpha}_{t_k} ) v_{k+1}(x,P_k(Y^{t,y,\alpha}_{t_k}))\right] + C\mathbb{E}\left[\left|W_{t_k}-W_t\right|\right]
\end{array}\]
We can bound the error term by $C\left|t_k-t\right|^{1/2}$. By viewing the term containing $v_{k+1}$ as a perspective map applied to a concave function (See the proof of Proposition~\ref{Proposition:Concavity}) and noting that the controlled process $Y$ is a martingale, we can apply Jensen's Inequality to see
\begin{eqnarray}
\mathbb{E}\left[Y^{(k),t,y,\alpha}_{t_k} f(x) + (1-Y^{(k),t,y,\alpha}_{t_k} ) v_{k+1}(x,P_k(Y^{t,y,\alpha}_{t_k}))\right] & \leq & y_k f(x) + (1-y_k ) v_{k+1}(x,P_k(y))\nonumber\\
& \leq & w_k(t_k,x,y).\nonumber
\end{eqnarray}
But then because $\alpha$ was arbitrary, we conclude that
\[w_k(t,x,y) - w(t_k,x,y) \leq C\left|t_k-t\right|^{1/2}.\]

\vspace{5mm}\textit{Step 4:} Fix $(t,x,y)\in[t_{k-1},t_k]\times\mathbb{R}\times\Delta_k$ and $x'\in\mathbb{R}$. Let $\alpha\in\mathcal{A}$ be an arbitrary control for which $Y^{t,y,\alpha}_u \in \Delta_k$ for all $u\geq t$, almost-surely. Then we immediately compute
\begin{eqnarray}
w_k(t,x',y) & \geq & \mathbb{E}\left[Y^{(k),t,y,\alpha}_{t_k} f(X^{t,x'}_{t_k}) + (1-Y^{(k),t,y,\alpha}_{t_k} ) v_{k+1}(X^{t,x'}_{t_k},P_k(Y^{t,y,\alpha}_{t_k}))\right] \nonumber\\
& \geq & \mathbb{E}\left[Y^{(k),t,y,\alpha}_{t_k} f(X^{t,x}_{t_k}) + (1-Y^{(k),t,y,\alpha}_{t_k} ) v_{k+1}(X^{t,x}_{t_k},P_k(Y^{t,y,\alpha}_{t_k}))\right] - 2C\left|x-x'\right|,\nonumber
\end{eqnarray}
where $C>0$ is at least as large as the Lipschitz constant in $x$ of $f$ and $v_{k+1}$. Because $\alpha$ was arbitrary we conclude
\[w_k(t,x',y) \geq w(t,x,y) - 2C\left|x-x'\right|.\]

\vspace{5mm}\textit{Step 5:} Fix $(t,x,y)\in[t_{k-1},t_k]\times\mathbb{R}\times\Delta_k$. Suppose for now that $y$ is not a vertex of $\Delta_k$, or equivalently that the value in each coordinate is less than one. Denote by
\[\mathcal{I} := \left\{i\in\{1,\ldots,r\}\mid y_i = 0\right\},\qquad \mathcal{J} = \{1,\ldots,r\}\setminus\mathcal{I}\]
the disjoint collections of coordinates in which $y$ is zero and non-zero, respectively. Let
\[\delta := \min_{i\in\mathcal{J}}\left\{ \min\{|y_i|,|1-y_i|\} \right\} > 0.\]
Let $y'\in\Delta_k$ be any point satisfying $\|y-y'\|_{\ell^\infty}\leq\delta^2$. Let $\alpha\in\mathcal{A}$ be an arbitrary control for which $Y^{t,y,\alpha}_u \in \Delta_k$ for all $u\geq t$, almost-surely. Note that $\alpha$ equals zero almost surely for each coordinate in $\mathcal{I}$.

Define a new control $\alpha'\in\mathcal{A}$ as
\[\alpha'_u := (1-\|y-y'\|_{\ell^\infty}^{1/2})\,\alpha_u\]
and note that by construction we have
\[\min(Y^{(i),t,y,\alpha'}_u,\,1-Y^{(i),t,y,\alpha'}_u) \geq  \delta \|y-y'\|_{\ell^\infty}^{1/2}\]
for each $i\in\mathcal{J}$. Similarly, we have
\[\min(Y^{(i),t,y',\alpha'}_u,\,1-Y^{(i),t,y',\alpha'}_u) \geq \delta \|y-y'\|_{\ell^2}^{1/2} - \|y-y'\|_{\ell^2} \geq (\delta - \|y-y'\|_{\ell^2}^{1/2})\|y-y'\|_{\ell}^{1/2} \geq 0,\]
for all $u\geq t$, almost-surely. This together with the observation that $\alpha'$ equals zero in each direction in $\mathcal{I}$ implies that $Y^{t,y',\alpha'}_u\in\Delta_k$ for all $u\geq t$ almost-surely. Furthermore, we have
\begin{eqnarray}\label{Equation:HolderProof1}
\|Y^{t,y,\alpha}_{t_k}-Y^{t,y',\alpha'}_{t_k}\|_{\ell^\infty} & \leq & \|y-y'\|_{\ell^\infty} + \left(1-\sqrt{1-\|y-y'\|_{\ell^\infty}^{1/2}}\right)\|Y^{t,y,\alpha}_{t_k}-y\|_{\ell^\infty}\nonumber\\
& \leq &  \|y-y'\|_{\ell^\infty} + R\|y-y'\|_{\ell^\infty}^{1/2}\nonumber\\
& \leq & (1+R)\|y-y'\|_{\ell^\infty}^{1/2},
\end{eqnarray}
almost-surely, where $R>0$ is the diameter of the set $\Delta_k$.

Before proceeding with concrete bounds, we note an estimate regarding the perspective map. For any $y,y'\in\Delta_k$ such that $y_k,y'_k\neq 1$, we have
\[\|P_k(y) - P_k(y')\|_{\ell^\infty} \leq \left(\frac{1}{1-y_k}+\frac{1}{1-y'_k}\right)\|y-y'\|_{\ell^\infty}.\]
That is, the perspective map fails to be Lipschitz as $y_k,y'_k\to 1$. Using this along with the H\"older continuity and bounds on $v_{k+1}$ from Proposition~\ref{Proposition:LipschitzEstimate}, we can carefully bound
\[\begin{array}{l}
\left|(1-y_k)v_{k+1}(x,P_k(y)) - (1-y'_k)v_{k+1}(x,P_k(y'))\right| \\ 
\hspace{1cm} \leq \min\{1-y_k,1-y'_k\}\left|v_{k+1}(x,P_k(y))-v_{k+1}(x,P_k(y'))\right| \\
\hspace{2cm} + |y_k-y'_k|\left(\left|v_{k+1}(x,P_k(y))\right| + \left|v_{k+1}(x,P_k(y'))\right|\right)\\
\hspace{1cm} \leq \sqrt{2}C\|y-y'\|_{\ell^\infty}^{1/2} + 2C(1+|x|)\|y-y'\|_{\ell^\infty}.
\end{array}\]
This bound is easily seen in the case $y_k,y'_k\neq 1$ and may be carefully checked when either equals zero exactly.

Then using the bound above as well as the growth bounds on $f$, we check
\begin{eqnarray}
w_k(t,x,y') & \geq & \mathbb{E}\left[Y^{(k),t,y',\alpha'}_{t_k} f(X^{t,x}_{t_k}) + (1-Y^{(k),t,y',\alpha'}_{t_k} ) v_{k+1}(X^{t,x}_{t_k},P_k(Y^{t,y',\alpha'}_{t_k}))\right] \nonumber\\
& \geq & \mathbb{E}\left[Y^{(k),t,y,\alpha}_{t_k} f(X^{t,x}_{t_k}) + (1-Y^{(k),t,y,\alpha}_{t_k} ) v_{k+1}(X^{t,x}_{t_k},P_k(Y^{t,y,\alpha}_{t_k}))\right] \nonumber\\
& & \hspace{1cm} - \mathbb{E}\left[ \sqrt{2}C\|Y^{t,y,\alpha}_{t_k}-Y^{t,y',\alpha'}_{t_k}\|_{\ell^\infty}^{1/2} + 3C|X^{t,x}_{t_k}|\|Y^{t,y,\alpha}_{t_k}-Y^{t,y',\alpha'}_{t_k}\|_{\ell^\infty}\right].\nonumber
\end{eqnarray}
Applying H\"older's Inequality and the almost-sure bound in \eqref{Equation:HolderProof1}, we bound the last term by
\[\mathbb{E}\left[|X^{t,x}_{t_k}|\,\|Y^{t,y,\alpha}_{t_k}-Y^{t,y',\alpha'}_{t_k}\|_{\ell^\infty}\right] \leq \left(x^2+t_r\right)^{1/2}(1+R)^{1/2}\|y-y'\|_{\ell^\infty}^{1/4}.\]
Putting these all together and recalling that $\alpha$ was arbitrary, we conclude
\[w_k(t,x,y)-w_k(t,x,y') \leq C(1+\left|x\right|)\|y-y'\|_{\ell^2}^{1/4},\]
for large enough constant $C>0$.

If $y$ is a vertex, then the only admissible control is $\alpha\equiv 0$, so we obtain the same bound (in fact a better bound) for any nearby $y'$ directly from Proposition~\ref{Proposition:LipschitzEstimate}.

\vspace{5mm}\textit{Step 6:} Fix some $\delta > 0$ at least small enough that $\delta < 1/(2r)$. Fix $(t,x,y)\in[t_{k-1},t_k]\times\mathbb{R}\times\Delta_k$ such that at least one element of $(y_k,\ldots,y_r)$ is in $(0,\delta^2)$. Denote by
\[\mathcal{I} := \left\{i\in\{1,\ldots,r\}\mid y_i = 0\right\},\qquad \mathcal{J} = \left\{i\in\{1,\ldots,r\}\mid y_i \in (0,\delta^2)\right\},\]
\[\mathcal{K} := \{1,\ldots,r\}\setminus(\mathcal{I}\cup\mathcal{J})\]
the disjoint collections of coordinates in which $y$ is zero, ``small'', and ``large'', respectively. Let $y'\in\Delta_k$ be any point obtained from setting elements of $y$ in $\mathcal{J}$ to zero and adding these values to a single index $\kappa\in\mathcal{K}$. Then $y'$ is zero in all coordinates $\mathcal{I}\cup\mathcal{J}$ and non-zero (and ``large'') in all coordinates $\mathcal{K}$. Furthermore, $\|y-y'\|_{\ell^\infty} \geq \delta^2$, so
\[\delta \leq \|y-y'\|^{1/2}_{\ell^\infty}.\]

Let $\alpha\in\mathcal{A}$ be an arbitrary control for which $Y^{t,y,\alpha}_u \in \Delta_k$ for all $u\geq t$, almost-surely. Note that $\alpha$ equals zero almost surely for each coordinate in $\mathcal{I}$. Similarly, because each component of $Y^{t,y,\alpha}$ is a martingale, we conclude
\[\delta^2 \geq y_i = \mathbb{E}\left[Y^{(i),t,y,\alpha}_{t_k}\right] \geq \delta \mathbb{P}\left[Y^{(i),t,y,\alpha}_{t_k}\right]\]
for each $i\in\mathcal{J}$. That is, $\mathbb{P}\left[Y^{(i),t,y,\alpha}_{t_k}\geq\delta\right]\leq\delta$, so the $i$th coordinate of $Y^{t,y,\alpha}$ stays small with high probability.

Define $\alpha'\in\mathcal{A}$ by moving the values of $\alpha$ at coordinates $i\in\mathcal{J}$ to the $\kappa$th coordinate. Note that, by construction,
\[Y^{(\kappa),t,y',\alpha'}_u = Y^{(\kappa),t,y,\alpha}_u + \sum\limits_{i\in\mathcal{J}} Y^{(i),t,y,\alpha}_u \in [0,1]\]
and $Y^{(\kappa),t,y',\alpha'}_u\in\Delta_k$ for all $u\geq t$, almost-surely. Furthermore, we have
\[Y^{(i),t,y',\alpha'}_{t_k} = Y^{(i),t,y,\alpha}_{t_k},\]
for each $i\in\mathcal{I}\cup\mathcal{K}\setminus\{\kappa\}$, and
\[\mathbb{P}\left[\left| Y^{(i),t,y',\alpha'}_{t_k} - Y^{(i),t,y,\alpha}_{t_k}\right| \geq \delta\right] \leq \delta\]
for each $i\in\mathcal{J}$. Lastly, we have
\[\mathbb{P}\left[\left| Y^{(\kappa),t,y',\alpha'}_{t_k} - Y^{(\kappa),t,y,\alpha}_{t_k}\right| \geq r\delta\right] \leq \delta.\]
In summary, we have
\[\mathbb{P}\left[\|Y^{t,y',\alpha'}_{t_k}-Y^{t,y,\alpha}_{t_k}\|_{\ell^\infty}\geq 2r\delta\right] \leq \delta.\]

Then using the same bounds as in the previous step, we can now compute
\begin{eqnarray}
w_k(t,x,y') & \geq & \mathbb{E}\left[Y^{(k),t,y',\alpha'}_{t_k} f(X^{t,x}_{t_k}) + (1-Y^{(k),t,y',\alpha'}_{t_k} ) v_{k+1}(X^{t,x}_{t_k},P_k(Y^{t,y',\alpha}_{t_k}))\right] \nonumber\\
& \geq & \mathbb{E}\left[Y^{(k),t,y,\alpha}_{t_k} f(X^{t,x}_{t_k}) + (1-Y^{(k),t,y,\alpha}_{t_k} ) v_{k+1}(X^{t,x}_{t_k},P_k(Y^{t,y,\alpha}_{t_k}))\right] \nonumber\\
& & - \mathbb{E}\left[ \sqrt{2}C\|Y^{t,y,\alpha}_{t_k}-Y^{t,y',\alpha'}_{t_k}\|_{\ell^\infty}^{1/2} + 3C|X^{t,x}_{t_k}|\|Y^{t,y,\alpha}_{t_k}-Y^{t,y',\alpha'}_{t_k}\|_{\ell^\infty}\right].\nonumber
\end{eqnarray}
The first term on the right-hand-side may be bounded as
\begin{eqnarray}
\mathbb{E}\left[\|Y^{t,y,\alpha}_{t_k}-Y^{t,y',\alpha'}_{t_k}\|^{1/2}_{\ell^\infty}\right] & \leq & \sqrt{2}\,\mathbb{P}\left[\|Y^{t,y',\alpha'}_{t_k}-Y^{t,y,\alpha}_{t_k}\|_{\ell^\infty}\geq 2r\delta\right] + \sqrt{2r\delta} \nonumber\\
& \leq & \sqrt{2(r+1)}\delta^{1/2}.\nonumber
\end{eqnarray}
Similarly, the second term may be bounded as
\begin{eqnarray}
\mathbb{E}\left[|X^{t,x}_{t_k}|\|Y^{t,y,\alpha}_{t_k}-Y^{t,y',\alpha'}_{t_k}\|_{\ell^\infty}\right] & \leq & \sqrt{|x|^2+t_r}\,\mathbb{E}\left[\|Y^{t,y,\alpha}_{t_k}-Y^{t,y',\alpha'}_{t_k}\|^2_{\ell^\infty}\right]^{1/2}\nonumber\\
& \leq & 2\sqrt{(|x|^2+t_r)(1+r^2)}\delta.\nonumber
\end{eqnarray}
Putting these all together and recalling that $\alpha$ was arbitrary and $\delta \leq \|y-y'\|_{\ell^\infty}^{1/2}$, we conclude
\[w_k(t,x,y)-w_k(t,x,y') \leq C(1+\left|x\right|)\|y-y'\|^{1/4}_{\ell^2},\]
for large enough constant $C>0$.

\vspace{5mm}\textit{Step 7:} We now briefly remark how to put all of these estimates together. We consider the H\"older estimates in each coordinate separately as they can be combined in the end using triangle inequality. Note that the Lipschitz regularity in $x$ has already been proven.

In the time direction, fix $(t,x,y)\in[t_{k-1},t_k]\times\mathbb{R}\times\Delta_k$ and $t'\in[t_{k-1},t_k]$ some small $\theta>0$ such that $\theta<t_k-t_{k-1}$. If $t'\leq t$, then by Step 1 we have
\[w_k(t,x,y) - w_k(t',x,y) \leq 2C^2\left|t-t'\right|^{1/2}.\]
If $t<t'=t_k$, then by Step 3 we have
\[w_k(t,x,y) - w_k(t',x,y) = w_k(t,x,y) - w_k(t_k,x,y) \leq C\left|t_k-t\right|^{1/2} = C\left|t-t'\right|^{1/2}.\]
Now suppose that $t < t' \leq t_k-|t-t'|^{1/2}$. By Step 2, we have
\[w_k(t,x,y) - w_k(t',x,y) \leq 4C^2\frac{|t-t'|}{t_k-t'} \leq 4C^2\frac{|t-t'|}{|t-t'|^{1/2}} = 4C^2|t-t'|^{1/2}.\]
In the next step, we critically see where the $(1/4)$-H\"older coefficient appears. If $t_k-|t-t'|^{1/2} \leq t < t' < t_k$ then by an application of Step 1 and Step 3, we see
\begin{eqnarray}
w_k(t,x,y) - w_k(t',x,y) & \leq & [w_k(t,x,y) - w_k(t_k,x,y)] + [w_k(t_k,x,y)-w_k(t',x,y)] \nonumber\\
& \leq & C|t_k-t| + 4C^2|t_k-t'|^{1/2} \nonumber\\
& \leq & C|t-t'|^{1/2} + 4C^2|t-t'|^{1/4}.\nonumber
\end{eqnarray}
Lastly we consider the case $t \leq t_k-|t-t'|^{1/2} \leq t' < t_k$. By an application of each of Steps 1 through 3, we see
\begin{eqnarray}
w_k(t,x,y) - w_k(t',x,y) & \leq & [w_k(t,x,y) - w_k(t_k-|t-t'|^{1/2},x,y)] \nonumber\\
&& + [w_k(t_k-|t-t'|^{1/2},x,y)-w_k(t_k,x,y)] + [w_k(t_k,x,y) - w_k(t',x,y)] \nonumber\\
& \leq &  4C^2\frac{|t_k-|t-t'|^{1/2}-t|}{|t-t'|^{1/2}} + C|t-t'|^{1/4} + 4C^2|t_k-t'|^{1/2} \nonumber\\
& \leq & 4C^2\frac{|t-t'|}{|t-t'|^{1/2}} + C|t-t'|^{1/4} + 4C^2|t-t'|^{1/4}\nonumber\\
& \leq & (C+8C^2)|t-t'|^{1/4}.\nonumber
\end{eqnarray}
Of course, by taking a large enough constant we can bound all $|t-t'|^{1/2}$ terms by $|t-t'|^{1/4}$ terms and obtain the $(1/4)$-H\"older continuity result in $t$.

The $(1/4)$-H\"older continuity result follows by a similar approach by cases as in the time perturbation case. The key idea is that Step 5 and Step 6 tell locally how to perturb in a $(1/4)$-H\"older way, including onto and off the boundaries. Then by a covering argument and the compactness of $\Delta_k$ we can obtain a finite chain of local H\"older inequalities connecting any two points and obtain the result for sufficiently large constant.
\end{proof}

\section{Proof of Theorem~\ref{Thm:TimeDependentDPP}}\label{Appendix:TimeDependentDPP}

This argument is essentially a time-dependent version of that given in the proof of Lemma~\ref{Lem:WeakDPPLemma}. The key idea here is to use the convexity of the set $\Delta_k$ and the concavity of $w_k$ in $y$ to construct an $\epsilon$-suboptimal control which satisfies the state-constraint as a convex combination of admissible controls starting from nearby points.

\begin{proof}
Fix $(t,x,y)\in[t_{k-1},t_k)\times\mathbb{R}\times\Delta_k$ and $0<h<t_k-t_{k-1}$. For convenience of notation, define $\theta := t_k$ and
\begin{equation}\nonumber\begin{array}{rccl}
A & := & \sup\limits_{\alpha\in\mathcal{A}_t} & \mathbb{E}\left[w_k\left(\tau^\alpha,X^{t,x}_{\tau^\alpha},Y^{t,y,\alpha}_{\tau^\alpha}\right)\right] \\
& & \text{s.t.} & Y^{t,y,\alpha}_u \in \Delta_k\text{ for all }u\geq t.
\end{array}\end{equation}
The inequality $w_k(t,x,y) \leq A$ is a standard result even in the case of these state-constraints. We refer the interested reader to Theorem 3.3 in \cite{Touzi2013} and instead focus on the opposite inequality.


\vspace{5mm}\textit{Step 1:} Fix an arbitrary $\epsilon > 0$. Choose $R>0$ large enough that
\[\mathbb{P}\left[\sup\limits_{t\leq u\leq t+h} \left| W_u-W_t\right| \geq R\right] \leq \epsilon^2.\]
Because $w_k$ is continuous on the compact set $[t,t+h]\times[x-R,x+R]\times\Delta_k$, we can find $\delta > 0$ small enough that
\[\left|w_k(t',x',y')-w_k(t',x',y'')\right|\leq\epsilon\]
for all $(t',x')\in[t,t+h]\times[x-R,x+R]$ and $y',y''\in\Delta_k$ such that
\[\|y'-y''\|_{\ell^\infty} \leq \delta.\]
Similarly, because $f$ is Lipschitz and $v_{k+1}$ is Lipschitz in $x$ uniformly in $y$, we can find $\delta > 0$, possibly smaller than before, such that we also have
\[\left|f(x')-f(x'')\right|+\left|v_{k+1}(x',y')-v_{k+1}(x'',y')\right|\leq\epsilon\]
for all $x',x''\in\mathbb{R}$ and $y'\in\Delta_{k+1}$ such that
\[\left|x'-x''\right| \leq \delta.\]
Finally, take $\delta > 0$ potentially even smaller so that
\[\delta^{1/2}+\delta^{1/4} \leq \epsilon.\]

\vspace{5mm}\textit{Step 2:} We first construct a finite mesh on $[t,t+h]$ and $\Delta_k$ which will be fine enough to take advantage the continuity of $w_k$. Let $\Lambda := \{t_i\}_{i=1}^M$ be a finite collection of mesh points in $[t,t+h]$ with the key property that for any $u\in[t,t+h]$, there exists $i\in\{1,\ldots,M\}$ such that $u \leq t_i \leq u+\delta$. 

By the compactness and convexity of $\Delta_k$, we can obtain a finite subset of $\Delta_k$, $\mathcal{P} := \{y_\ell\}_{\ell=1}^P$, with the property that
\begin{itemize}
\item The convex hull of $\mathcal{P}$ is $\Delta_k$, and
\item Any point $y\in\Delta_k$ can be written as a convex combination of points in $\mathcal{P}$, each contained in a $\delta$-neighborhood of $y$.
\end{itemize}
In particular, we can find a continuous function $T : \Delta_k \to [0,1]^P$ with the properties
\begin{itemize}
\item $T_\ell(y) = 0$ for all $y\in\Delta_k$ such that $|y-y_\ell|>\delta$
\item $\sum_{\ell=1}^P T_\ell(y) = 1$ for all $y\in\Delta_k$, and
\item $\sum_{\ell=1}^P y_\ell T_\ell(y) = y$ for all $y\in\Delta_k$.
\end{itemize}
This corresponds to a continuous map from a point $y\in\Delta_k$ to a probability weighting of points in $\mathcal{P}$ such that $y$ is a convex combination of nearby points in $\mathcal{P}$. 

By the same type of covering argument as in the proof of Lemma~\ref{Lem:ControlCharacterization}, we can obtain a finite and disjoint covering of $[x-R,x+R]$ by measurable sets $\{A_j\}_{j=1}^N$, each contained in a $\delta$-ball, and controls $\alpha_{ij\ell}\in\mathcal{A}_{t_i}$ with the key properties that $Y^{t_i,y_\ell,\alpha_{ij\ell}}_u \in \Delta_k$ for all $u\geq t_i$ and
\[\mathbb{E}\left[Y^{(k),t_i,y_\ell,\alpha_{ij\ell}}_\theta f(X^{t_i,x}_\theta) + (1-Y^{(k),t_i,y_\ell,\alpha_{ij\ell}}_\theta)v_{k+1}(X^{t_i,x}_\theta,P_k(Y^{t_i,y_\ell,\alpha_{ij\ell}}_\theta))\right] \geq w_k(t_i,x,y_\ell) - 3\epsilon\]
for each $i\in\{1,\ldots,M\}$, $j\in\{1,\ldots,N\}$, $\ell\in\{1,\ldots,P\}$, and $x\in A_j$.

\vspace{5mm}\textit{Step 3:} Fix an arbitrary control $\alpha\in\mathcal{A}$ for which $Y^{t,y,\alpha}_u \in \Delta_k$ for all $u\geq t$ and let $\tau^\alpha$ be the associated stopping time which is valued in $[t,t+h]$. We are next going to construct a new control related to the suboptimal controls $\alpha_{ij\ell}$. In words, we will follow $\alpha$ up to the stopping time $\tau^\alpha$, then set the control to zero until the first subsequent hitting time of $\Lambda$. Then we will follow an appropriate convex combination of the controls $\alpha_{ij\ell}$.

To make this precise, define a stopping time
\[\overline\tau := \inf\{t\geq\tau^\alpha\mid t \in \Lambda\}.\]
Define a collection of controls $\alpha_\ell \in \mathcal{A}$ as
\[\alpha_{\ell,u} := 1_{\{u\in[t,\tau^\alpha]\}} \alpha_u + 1_{\{u > \overline\tau\}}\sum\limits_{i=1}^M \sum\limits_{j=1}^N 1_{\{\overline\tau = t_i\}}1_{\{X^{t,x}_{\overline\tau}\in A_j\}}\alpha_{ij\ell,u}\]
for each $\ell \in \{1,\ldots,P\}$ and all $u\geq t$. Finally, define a control $\overline\alpha\in\mathcal{A}$ as
\[\overline\alpha_u := 1_{\{u\in[t,\tau^\alpha]\}} \alpha_u + 1_{\{u > \overline\tau\}}\sum\limits_{\ell=1}^P T_\ell(Y^{t,y,\alpha}_{\tau^\alpha})\alpha_{\ell,u}\]
for all $u\geq t$.

The control $\overline\alpha$ is adapted because the map $T$ is continuous. Similarly, it can be easily seen to be square-integrable. The key property, however, is that $\overline\alpha$ satisfies $Y^{t,y,\overline\alpha}_u \in \Delta_k$ for all $u\geq t$. In words, this follows from the convexity of the set $\Delta_k$ and the fact that $\overline\alpha$ is a convex combination of controls, each of which satisfy the state-constraint.

Making this precise, we use the assumed properties of the map $T$ and the dynamics of $Y$ to compute 
\begin{eqnarray}
Y^{t,y,\overline\alpha}_\theta & = & y + \int_t^{\tau^\alpha} \alpha_u dW_u + \int_{\overline\tau}^\theta \overline\alpha_u dW_u \nonumber\\
& = & Y^{t,y,\alpha}_{\tau^\alpha} + \sum\limits_{i,j,\ell} 1_{\{\overline\tau=t_i\}}1_{\{X^{t,x}_{\overline\tau}\in A_j\}} T_\ell(Y^{t,y,\alpha}_{\tau^\alpha})\int_{\overline\tau}^\theta \alpha_{ij\ell,u}dW_u \nonumber\\
& = & 1_{\{\left|W_{\overline\tau}-W_t\right|\geq R\}}Y^{t,y,\alpha}_{\tau^\alpha} + \sum\limits_{i,j} 1_{\{\overline\tau=t_i\}}1_{\{X^{t,x}_{\overline\tau}\in A_j\}} \left(Y^{t,y,\alpha}_{\tau^\alpha}+\sum\limits_{\ell} T_\ell(Y^{t,y,\alpha}_{\tau^\alpha})\int_{\overline\tau}^\theta \alpha_{ij\ell,u}dW_u\right) \nonumber\\
& = & 1_{\{\left|W_{\overline\tau}-W_t\right|\geq R\}}Y^{t,y,\alpha}_{\tau^\alpha} + \sum\limits_{i,j} 1_{\{\overline\tau=t_i\}}1_{\{X^{t,x}_{\overline\tau}\in A_j\}} \sum\limits_{\ell}T_\ell(Y^{t,y,\alpha}_{\tau^\alpha})\left(y_\ell+\int_{\overline\tau}^\theta \alpha_{ij\ell,u}dW_u\right) \nonumber\\
& = & 1_{\{\left|W_{\overline\tau}-W_t\right|\geq R\}}Y^{t,y,\alpha}_{\tau^\alpha} + \sum\limits_{i,j} 1_{\{\overline\tau=t_i\}}1_{\{X^{t,x}_{\overline\tau}\in A_j\}} \sum\limits_{\ell}T_\ell(Y^{t,y,\alpha}_{\tau^\alpha})Y^{t_i,y_\ell,\alpha_{ij\ell}}_\theta \nonumber
\end{eqnarray}
Recall though that $Y^{t_i,y_\ell,\alpha_{ij\ell}}_\theta \in \Delta_k$ and $Y^{t,y,\alpha}_{\tau^\alpha} \in \Delta_k$ almost-surely. Then the equality above and the convexity of $\Delta_k$ demonstrate that $Y^{t,y,\overline\alpha}_u \in \Delta_k$ for all $u\geq t$ almost-surely.

\vspace{5mm}\textit{Step 4:} We now proceed to make a very delicate series of estimates. First, we have
\begin{eqnarray}
w_k(t,x,y) & \geq & \mathbb{E}\left[Y^{(k),t,y,\overline\alpha}_\theta f(X^{t,x}_\theta) + (1-Y^{(k),t,y,\overline\alpha}_\theta)v_{k+1}(X^{t,x}_\theta,P_k(Y^{t,y,\overline\alpha}_\theta))\right] \nonumber\\
& \geq & \sum\limits_{i,j}\mathbb{E}\left[1_{\{\overline\tau = t_i\}}1_{\{X^{t,x}_{\overline\tau}\in A_j\}}\left(Y^{(k),t,y,\overline\alpha}_\theta f(X^{t,x}_\theta) + (1-Y^{(k),t,y,\overline\alpha}_\theta)v_{k+1}(X^{t,x}_\theta,P_k(Y^{t,y,\overline\alpha}_\theta))\right)\right] \nonumber\\
& & \hspace{1cm} - C\mathbb{E}\left[1_{\{\left|W_{\overline\tau}\right|\geq R\}}\left(1+\left|X^{t,x}_{\overline\tau}\right|\right)\right]\nonumber,
\end{eqnarray}
where the last term comes from known growth bounds on $f$ and $v_{k+1}$. Of course, this term is bounded by $\epsilon\sqrt{C(1+|x|)}$ by the choice of $R$ and use of H\"older's Inequality. Next, rewriting each term in the sum above using the key property from the construction of $\overline\alpha$ in the previous step, we see
\[\begin{array}{l}
\mathbb{E}\left[1_{\{\overline\tau = t_i\}}1_{\{X^{t,x}_{\overline\tau}\in A_j\}}\left(Y^{(k),t,y,\overline\alpha}_\theta f(X^{t,x}_\theta) + (1-Y^{(k),t,y,\overline\alpha}_\theta)v_{k+1}(X^{t,x}_\theta,P_k(Y^{t,y,\overline\alpha}_\theta))\right)\right] \\
\hspace{1cm} = \mathbb{E}\left[1_{\{\overline\tau = t_i\}}1_{\{X^{t,x}_{\overline\tau}\in A_j\}}\left(\sum_{\ell=1}^P T_\ell(Y^{t,y,\alpha}_{\tau^\alpha}) Y^{(k),t_i,y_\ell,\alpha_{ij\ell}}_\theta f(X^{t,x}_\theta) \right.\right. \\
\hspace{2cm}\left.\left. + (1-\sum_{\ell=1}^P T_\ell(Y^{t,y,\alpha}_{\tau^\alpha})Y^{(k),t_i,y_\ell,\alpha_{ij\ell}}_\theta)v_{k+1}(X^{t,x}_\theta,P_k(\sum_{\ell=1}^P T_\ell(Y^{t,y,\alpha}_{\tau^\alpha})Y^{t_i,y_\ell,\alpha_{ij\ell}}_\theta))\right)\right]\\
\hspace{1cm} \geq \sum\limits_{\ell=1}^P \mathbb{E}\left[1_{\{\overline\tau = t_i\}}1_{\{X^{t,x}_{\overline\tau}\in A_j\}}T_\ell(Y^{t,y,\alpha}_{\tau^\alpha})\left( Y^{(k),t_i,y_\ell,\alpha_{ij\ell}}_\theta f(X^{t,x}_\theta) \right.\right.\\
\hspace{2cm}\left.\left. + (1-Y^{(k),t_i,y_\ell,\alpha_{ij\ell}}_\theta)v_{k+1}(X^{t,x}_\theta,P_k(Y^{t_i,y_\ell,\alpha_{ij\ell}}_\theta))\right)\right]
\end{array}\]
using the concavity of $v_{k+1}$ composed with the perspective function in $y$ (See the proof of Proposition~\ref{Proposition:Concavity}). Next, by the suboptimality conditions of $\alpha_{ijk}$, we see
\[\begin{array}{l}
 \mathbb{E}\left[1_{\{\overline\tau = t_i\}}1_{\{X^{t,x}_{\overline\tau}\in A_j\}}T_\ell(Y^{t,y,\alpha}_{\tau^\alpha})\left( Y^{(k),t_i,y_\ell,\alpha_{ij\ell}}_\theta f(X^{t,x}_\theta)  + (1-Y^{(k),t_i,y_\ell,\alpha_{ij\ell}}_\theta)v_{k+1}(X^{t,x}_\theta,P_k(Y^{t_i,y_\ell,\alpha_{ij\ell}}_\theta))\right)\right]\\
\hspace{1cm} \geq \mathbb{E}\left[1_{\{\overline\tau = t_i\}}1_{\{X^{t,x}_{\overline\tau}\in A_j\}}T_\ell(Y^{t,y,\alpha}_{\tau^\alpha})w_k(t_i,X^{t,x}_{\overline\tau},y_\ell)\right] - 3\epsilon \\
\hspace{1cm} \geq \mathbb{E}\left[1_{\{\overline\tau = t_i\}}1_{\{X^{t,x}_{\overline\tau}\in A_j\}}T_\ell(Y^{t,y,\alpha}_{\tau^\alpha})w_k(t_i,X^{t,x}_{\overline\tau},Y^{t,y,\alpha}_{\tau^\alpha})\right] - 4\epsilon,
\end{array}\]
where we used the locality property of the map $T$ and continuity of $w_k$ assumed in the construction of $\mathcal{P}$. Lastly, summing over $i,j,\ell$, we see
\[\begin{array}{l}
\sum_{i,j,\ell} \mathbb{E}\left[1_{\{\overline\tau = t_i\}}1_{\{X^{t,x}_{\overline\tau}\in A_j\}}T_\ell(Y^{t,y,\alpha}_{\tau^\alpha})w_k(t_i,X^{t,x}_{\overline\tau},Y^{t,y,\alpha}_{\tau^\alpha})\right] \\
\hspace{1cm} \geq \mathbb{E}\left[w_k(\overline\tau, X^{t,x}_{\overline\tau}, Y^{t,y,\alpha}_{\tau^\alpha})\right] - \mathbb{E}\left[1_{\{\left(|W_{\overline\tau}\right|\geq R\}}w_k(\overline\tau, X^{t,x}_{\overline\tau}, Y^{t,y,\alpha}_{\tau^\alpha}\right]\\
\hspace{1cm} \geq \mathbb{E}\left[w_k(\tau^\alpha, X^{t,x}_{\tau^\alpha}, Y^{t,y,\alpha}_{\tau^\alpha})\right] - C\,\mathbb{E}\left[|\overline\tau-\tau^\alpha|^{1/4} + \left|X^{t,x}_{\overline\tau}-X^{t,x}_{\tau^\alpha}\right| + 1_{\{\left|W_{\overline\tau}\right|\geq R\}}\left(1+\left|X^{t,x}_{\overline\tau}\right|\right)\right] \\
\hspace{1cm} \geq \mathbb{E}\left[w_k(\tau^\alpha, X^{t,x}_{\tau^\alpha}, Y^{t,y,\alpha}_{\tau^\alpha})\right] - C\left(\delta^{1/4} + \delta^{1/2} + \epsilon\left(1+|x|\right)\right]
\end{array}\]
for sufficiently large $C>0$. In this step, we used growth bounds on $w_k$ and the H\"older estimates from Proposition~\ref{Proposition:TimeDependentHolderEstimates} together with the fact that $\left|\overline\tau-\tau^\alpha\right|\leq\delta$ by construction. By the choice of $\delta$, however, we see this last error term is bounded by $ C(2+|x|)\epsilon$.

Putting all these computations together and recalling that $\alpha$ and $\epsilon>0$ were arbitrary, we see
\[w_k(t,x,y) \geq A.\]
\end{proof}

{\small 

\bibliographystyle{siam}
\bibliography{Bibliography}
}
\end{document}